\def\yndef2{7.1}
\def\ksp{10}
\def\exgm{5.2}
\def\korankk{13.4}
\def\thirteenone{5.7}
\def\selmerdef{2.1}
\newtheorem{thm}[equation]{Theorem}
\newtheorem{lem}[equation]{Lemma}
\newtheorem{cor}[equation]{Corollary}
\newtheorem{prop}[equation]{Proposition}
\newtheorem{conj}[equation]{Conjecture}
\newtheorem*{Sconj}{Conjecture $\St(K/k,S,T,S')$}
\theoremstyle{definition}
\newtheorem{rem}[equation]{Remark}
\newtheorem{defn}[equation]{Definition}
\renewcommand{\theenumi}{(\roman{enumi})}
\numberwithin{equation}{section}
\newfont{\cyrr}{wncyr10}
\def\Z{\mathbb{Z}}
\def\Q{\mathbb{Q}}
\def\R{\mathbb{R}}
\def\C{\mathbb{C}}
\def\bN{\mathbf{N}}
\def\bm{\mathbf{m}}
\def\bu{\mathbf{u}}
\def\bw{\mathbf{w}}
\def\j{\mathbf{j}}
\def\Zp{\Z_p}
\def\Qp{\Q_p}
\def\A{\mathcal{A}}
\def\O{\mathcal{O}}
\def\P{\mathcal{P}}
\def\N{\mathcal{N}}
\def\cR{\mathcal{R}}
\def\cF{\mathcal{F}}
\def\cL{\mathcal{L}}
\def\DD{\mathcal{D}}
\def\BB{\mathcal{B}}
\def\m{\mathfrak{m}}
\def\n{\mathfrak{n}}
\def\p{\mathfrak{p}}
\def\q{\mathfrak{q}}
\def\d{\mathfrak{d}}
\def\fQ{\mathfrak{Q}}
\def\Hom{\mathrm{Hom}}
\def\Gal{\mathrm{Gal}}
\def\ord{\mathrm{ord}}
\def\unr{\mathrm{ur}}
\def\Frob{\mathrm{Fr}}
\def\St{\mathrm{St}}
\def\mlog{\mathrm{log}}
\def\Artin{\mathrm{Art}}
\def\too{\longrightarrow}
\def\isom{\xrightarrow{\sim}}
\def\isomm{\xrightarrow{\;\sim\;}}
\def\hookto{\hookrightarrow}
\def\onto{\twoheadrightarrow}
\def\bmu{\boldsymbol{\mu}}
\def\w#1{\wedge^{#1}}
\def\wk#1#2{#2_1 \wedge \cdots \wedge #2_{#1}}
\def\ZG{\Z[G]}
\def\ZH{\Z[H]}
\def\ZGamma{\Z[\Gamma]}
\def\abs#1#2{{|#1|_{#2}}}
\def\Nm{{\mathbf N}}
\def\ex{e_\chi}
\def\stick{\theta}
\def\one{\mathbf{1}}
\def\bvarphi{\boldsymbol{\varphi}}
\def\bpsi{\boldsymbol{\psi}}
\def\lat#1#2#3{{\w{#1,0}#3}}
\def\Tw{\mathrm{Tw}}
\def\si{\eta^{\mlog}}
\def\sa#1{\eta^\Artin_{#1}}
\def\mt#1{\overline{#1}}
\def\M{\mathcal{M}}
\def\stsys{\boldsymbol{\delta}}
\def\SS{\mathbf{SS}}
\def\KS{\mathbf{KS}}
\def\D{\O}
\def\Mchi{\M_\chi}
\def\Np{\N_p}
\def\stesys{\boldsymbol{\kappa}^\mathrm{St}}
\def\stek{\kappa^\mathrm{St}}
\title[Refined class number formulas for $\mathbb{G}_m$]
   {Refined class number formulas for $\mathbb{G}_m$}
\author{Barry Mazur}
\address{Department of Mathematics, 
Harvard University,
Cambridge, MA 02138, 
USA}
\email{\href{mailto:mazur@math.harvard.edu}{mazur@math.harvard.edu}}
\author{Karl Rubin}
\address{Department of Mathematics, 
UC Irvine,
Irvine, CA 92697, 
USA}
\email{\href{mailto:krubin@math.uci.edu}{krubin@math.uci.edu}}
\date{\today}
\subjclass[2010]{Primary 11R42, 11R27; Secondary 11R23, 11R29}
\thanks{This material is based upon work supported by the 
National Science Foundation under grants DMS-1302409 and DMS-1065904.}
\begin{document}

\begin{abstract}
We formulate a generalization of a ``refined class number formula'' of Darmon.  
Our conjecture deals with Stickelberger-type elements formed from 
generalized Stark units, and has two parts: the ``order of vanishing'' 
and the ``leading term''.  Using the theory of Kolyvagin systems 
we prove a large part of this conjecture when 
the order of vanishing of the corresponding complex $L$-function is $1$.
\end{abstract}

\maketitle

\tableofcontents

\section{Introduction}

In \cite{darmon}, Darmon conjectured a ``refined class number formula'' for real 
quadratic fields, inspired by work of Gross \cite{gross}, of 
the first author and Tate \cite{mazurtate}, and of Hayes \cite{hayes}.  
The common setting for these conjectures included a finite abelian extension $L/K$ and a 
Stickelberger-type element $\theta \in \Z[\Gal(L/K)]$.  
In analogy with the Birch and Swinnerton-Dyer conjecture, 
these conjectures predicted the ``order of vanishing'' (a nonnegative integer $r$ 
such that $\theta$ lies in the $r$-th power of the augmentation ideal 
$\A$ of $\Z[\Gal(L/K)]$) and the ``leading term'' (the image of $\theta$ in $\A^r/\A^{r+1}$) 
of $\theta$.

In \cite{darmon.conj}, we proved most (the ``non-$2$-part'') of Darmon's conjecture, 
using the theory of Kolyvagin systems \cite{kolysys}.  The key idea is that 
in nice situations, the space of Kolyvagin systems is a free $\Zp$-module of rank 
one, and hence two Kolyvagin systems that agree ``at $n = 1$'' must be equal.  
Darmon's conjecture for $n = 1$ follows from the classical evaluation of $L'(0,\chi)$ 
for a real quadratic Dirichlet character $\chi$.

In this paper we attempt to generalize both the statement and proof of 
Darmon's conjecture.  To generalize the statement we rely on a 
suitable version of Stark's conjectures.  Namely, given a finite abelian 
tower of number fields $L/K/k$, our proposed Conjecture \ref{conj} relates 
the so-called ``Rubin-Stark'' elements $\epsilon_{L,S_L}$ 
attached to $L/k$ (see \S\ref{RSconj}) 
with an ``algebraic regulator'' (see Definition \ref{5.3})
constructed from Rubin-Stark elements $\epsilon_{K,S_L}$ attached to $K/k$ and $L$.  
Similar generalizations of Darmon's conjecture have recently been proposed independently 
by Sano \cite[Conjecture 4]{sano} and Popescu \cite{popescu2}.

Our conjecture has two parts, the ``order of vanishing'' and the ``leading term''.
We prove a large portion of the order of vanishing part of the conjecture in Theorem \ref{ordvan}.
We prove a large part of the leading term statement in Theorem \ref{9.7} following the method of 
\cite{darmon.conj}, but only under the rather strong assumption that the order of vanishing 
(the ``core rank'', in the language of \cite{gen.kolysys}) is one.  
As $L$ varies, the elements $\epsilon_{L,S_L}$ form an Euler system, 
and the elements $\epsilon_{K,S_L}$ form what we call a Stark system.  
When the order of vanishing is one we can relate these systems and prove 
the leading term formula.  In the final section we prove a weakened 
version of the leading term statement for general $r$, under some additional 
hypotheses.

\subsection*{Notation}

Suppose throughout this paper that 
$\D$ is an integral domain with field of fractions $F$, and let $R = \D[\Gamma]$
with a finite abelian group $\Gamma$.  
We are mainly interested in the case where $\O = \Z$ or $\Zp$ for some prime $p$.

If $M$ is an $R$-module, we let $M^* := \Hom_R(M,R)$.  
If $\rho \in R$, then $M[\rho]$ will denote the kernel of multiplication by 
$\rho$ in $M$.

If $r \ge 0$, then $\wedge^r M$ (or $\wedge_R^r M$, if we need to emphasize the ring $R$) 
will denote the $r$-th exterior power of $M$ in the category of $R$-modules,
with the convention that $\wedge^0M = R$.
See Appendix \ref{wedges} for more on the exterior algebra that we use.  
In particular, in Definition \ref{mylat} we define an $R$-lattice 
$\lat{r}{}{M} \subset \wedge^rM \otimes F$, containing the image of 
$\wedge^rM$, that will play an important role.

\section{Unit groups}
\label{units}

Suppose $K/k$ is a finite abelian extension of number fields.  Let $\Gamma = \Gal(K/k)$
and $R = \ZGamma$. 
Fix a finite set $S$ of places of $k$ containing all infinite places and all
places ramified in $K/k$, and
a second finite set $T$ of places of $k$, disjoint from $S$.  Define:
\begin{gather*}
S_K = \{\text{places of $K$ lying above places in $S$}\}, \\
T_K = \{\text{places of $K$ lying above places in $T$}\}, \\
U_{K,S,T} = \{x \in K^\times : \text{$\abs{x}{w} = 1$ for all $w \notin S_K$, 
$x \equiv 1 \pmod{w}$ for all $w \in T_K$} \}.
\end{gather*}
We assume further that $K$ has no roots of unity congruent to $1$ modulo 
all places in $T_K$, so that $U_{K,S,T}$ is a free $\Z$-module.  
When there is no fear of confusion, we will suppress the $S$ and $T$ and write 
$U_K := U_{K,S,T}$

Suppose now that $L$ is a finite abelian extension of $k$ containing $K$.  
Let $G := \Gal(L/k)$ and $H := \Gal(L/K)$, so $G/H = \Gamma$.  
Let $\A_H \subset \ZH$ be the augmentation ideal, the ideal generated by $\{h-1 : h \in H\}$.

\begin{cor}
\label{Ipair}
For every $s \le r$ and every $\rho \in \Q[\Gamma]$, Proposition \ref{Rpair} 
gives a canonical pairing
$$
(\lat{r}{}{U_K})[\rho] \times \w{r-s}\Hom_\Gamma(U_K,\ZGamma \otimes_\Z \A_H/\A_H^2) 
   \too (\lat{s}{}{U_K})[\rho] \otimes_\Z \A_H^{r-s}/\A_H^{r-s+1}.
$$
\end{cor}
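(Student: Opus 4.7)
The plan is to reduce to Proposition \ref{Rpair} by treating the $\A_H/\A_H^2$ coefficients in the second input as a bookkeeping device that lands in $\A_H^{r-s}/\A_H^{r-s+1}$ via iterated multiplication in the graded-commutative ring $\bigoplus_n \A_H^n/\A_H^{n+1}$. Writing $V := \A_H/\A_H^2$ and $N := \Hom_\Gamma(U_K, \ZGamma)$: since $U_K$ is a finitely generated $\ZGamma$-module, there is a canonical identification $\Hom_\Gamma(U_K, \ZGamma \otimes_\Z V) \cong N \otimes_\Z V$, so the second factor of the desired pairing becomes $\w{r-s}_{\ZGamma}(N \otimes_\Z V)$.

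The central construction is a canonical ``shuffle'' map
$$
\sigma : \w{r-s}_{\ZGamma}(N \otimes_\Z V) \longrightarrow \w{r-s}_{\ZGamma} N \otimes_\Z \mathrm{Sym}_\Z^{r-s} V
$$
determined on simple wedges by $(\psi_1 \otimes a_1) \wedge \cdots \wedge (\psi_{r-s} \otimes a_{r-s}) \longmapsto (\psi_1 \wedge \cdots \wedge \psi_{r-s}) \otimes (a_1 \cdots a_{r-s})$. Alongside $\sigma$, the multiplication in $\bigoplus_n \A_H^n/\A_H^{n+1}$ furnishes a $\Z$-linear map $\mathrm{Sym}_\Z^{r-s} V \to \A_H^{r-s}/\A_H^{r-s+1}$. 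The asserted pairing is then assembled by composition: apply $\sigma$ to an input, pair the $\w{r-s}_{\ZGamma} N$ factor with a given $e \in (\lat{r}{}{U_K})[\rho]$ via the pairing of Proposition \ref{Rpair} to land in $(\lat{s}{}{U_K})[\rho]$, and send the $\mathrm{Sym}_\Z^{r-s} V$ factor into $\A_H^{r-s}/\A_H^{r-s+1}$ by the multiplication map.

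The main obstacle is the well-definedness of $\sigma$: the rule on simple tensors must extend to a $\ZGamma$-multilinear alternating map on $(N \otimes_\Z V)^{r-s}$ in order to factor through the universal property of $\w{r-s}_{\ZGamma}$. The critical sign check is that swapping two inputs $(\psi_i \otimes a_i)$ and $(\psi_j \otimes a_j)$ in the source produces a single factor of $-1$, while on the target the exterior product of the $\psi$'s contributes $-1$ and the symmetric product of the $a$'s is unaffected; the two signs match precisely because the second slot of the target is a \emph{symmetric} power, which is in turn compatible with the graded-commutativity of $\bigoplus_n \A_H^n/\A_H^{n+1}$ in degree one. Once $\sigma$ is established, the remaining verifications (compatibility of the various $\ZGamma$-actions, including the action on $\A_H/\A_H^2$ arising from conjugation of $G$ on $H$, and linearity over $R$) are formal.
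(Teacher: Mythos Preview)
Your approach is correct and is essentially the paper's: the one-line proof there applies Proposition~\ref{Rpair} with $B = \bigoplus_{i\ge 0}\A_H^i/\A_H^{i+1}$, restricting inputs to degree~$1$ and implicitly composing with ring multiplication into degree $r-s$, which is exactly what your shuffle map $\sigma$ followed by $\mathrm{Sym}^{r-s}V \to \A_H^{r-s}/\A_H^{r-s+1}$ spells out (and your passage through $\mathrm{Sym}^{r-s}$ rather than the bare tensor power makes the well-definedness of that step cleaner). Two small points: the identification $\Hom_\Gamma(U_K,\ZGamma\otimes_\Z V)\cong N\otimes_\Z V$ needs $U_K$ to be \emph{free} over $\Z$, not merely finitely generated---this is the standing hypothesis on $T$ in \S\ref{units}---and since $G$ is abelian the conjugation action of $\Gamma$ on $\A_H/\A_H^2$ is trivial, so that compatibility check is vacuous.
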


\begin{proof}
Apply Proposition \ref{Rpair} with $B: = \oplus_{i \ge 0} \A_H^i/\A_H^{i+1}$ and $n=1$. 
\end{proof}

\section{A Stark conjecture over $\Z$}
\label{RSconj}

In this section we recall the so-called Rubin-Stark conjecture over $\Z$ for arbitrary 
order of vanishing from \cite{stark}.  When the order of vanishing (the integer $r$ below) 
is one, this is essentially the ``classical'' Stark conjecture over $\Z$ 
(see for example \cite[\S{IV.2}]{tate} and \cite[Proposition 2.5]{stark}).

Keep the finite abelian extension $K/k$ of number fields from \S\ref{units}, 
with $\Gamma = \Gal(K/k)$, and the sets $S, T$ of places of $K$.  
We define the Stickelberger function attached to $K/k$ (and $S$ and $T$) 
to be the meromorphic $\C[\Gamma]$-valued function
$$
\stick_{K/k}(s) = \stick_{K/k,S,T}(s) = \prod_{\p \notin S}(1-\Frob_{\p}^{-1}\Nm \p^{-s})^{-1} 
   \prod_{\p \in T}(1-\Frob_{\p}^{-1}\Nm \p^{1-s})
$$
where $\Frob_{\p} \in\Gamma$ is the Frobenius of the (unramified) prime $\p$.  
If $\chi \in \hat{\Gamma} := \Hom(\Gamma,\C^\times)$, then applying $\chi$ to the 
Stickelberger function yields the (modified at $S$ and $T$) Artin $L$-function
$$
\chi(\stick_{K/k}(s)) = L_{S,T}(K/k;\bar\chi,s).
$$

\begin{defn}
\label{def3.1}
If $w$ is a place of $K$ we write
$K_w$ for the completion of $K$ at $w$ and 
$\abs{\;\;}{w} : K_w \to \R^+ \cup \{0\}$ for the absolute value 
normalized so that
$$
\abs{x}{w} = 
\begin{cases}
 \pm x \text{~(the usual absolute value)} & \text{if $K_w = \R$}, \\
    x\bar{x}  &  \text{if $K_w = \C$}, \\
    \Nm w^{-\ord_w(x)}  &   \text{if $K_w$ is nonarchimedean}
\end{cases}
$$
where $\Nm w$ is the cardinality of the residue field of the finite place $w$.
\end{defn}

\begin{defn}
\label{def3.2}
Suppose now that $S' \subset S$ is a subset such that every $v \in S'$
splits completely in $K/k$.  Let $r = |S'| \ge 0$.  
Let $S'_K$ denote the set of primes of $K$ above $S'$, and 
let $W_{K,S'}$ denote the free abelian group on $S'_K$, so  
$W_{K,S'}$ is a free $\ZGamma$-module of rank $r$.

Define a $\ZGamma$-homomorphism
$
\si_K : U_K \to W_{K,S'} \otimes \R
$
by
$$
\si_K(u) = \sum_{w \in S'_K} w \otimes \log|u|_w.
$$
If $L$ is an abelian extension of $K$ with Galois group $H := \Gal(L/K)$, 
and $\A_H \subset \Z[H]$ is the augmentation ideal, let 
$[\;\cdot\;,L_w/K_w] : K_w^\times \to H$ denote the local Artin symbol (this is 
independent of the choice of place of $L$ above $w$) and define a $\ZGamma$-homomorphism 
$
\sa{L/K} : U_K \to W_{K,S'} \otimes_\Z \A_H/\A_H^2
$
by 
$$
\sa{L/K}(u) := \sum_{w \in S'_K} w \otimes ([u,L_w/K_w]-1).
$$
\end{defn}

\begin{defn}
Let
$$
\cR^\infty = \cR_{K,S,T,S'}^\infty : \w{r}U_K \otimes_{\Gamma} \wedge^r W_{K,S'}^* \too \R[\Gamma]
$$
be the classical regulator map induced by $\si_K : \wedge^r U_K \to \wedge^r W_{K,S'} \otimes \R$ and 
the natural isomorphism $\wedge^r W_{K,S'} \otimes \wedge^r W_{K,S'}^* \to \Z[\Gamma]$.
\end{defn}

Concretely, the map $\cR^\infty$ is given as follows.  
If $w \in S'_K$, let $w^* \in W_{K,S'}^*$ be the map
$$
w^*\biggl(\sum_{z \in S_K'}a_z z\biggr) := \sum_{\gamma\in\Gamma} a_{\gamma w}\, \gamma.
$$
If $v_1, \ldots, v_r$ is an ordering of 
the places in $S'$, and for each $i$ we choose a place $w_i$ of $K$ above $v_i$, then 
$\wk{r}{w}$ is a $\ZGamma$-basis of $\wedge^r W_{K,S'}$, and 
$\wk{r}{w^*}$ is the dual basis of $\wedge^r W_{K,S'}^*$.  Then
$$
\cR^\infty((\wk{r}{u}) \otimes (\wk{r}{w^*})) = \det \bigl(\sum_{\gamma\in\Gamma}\log\abs{u_i^\gamma}{w_j}\gamma^{-1} \bigr).
$$

\begin{defn}
\label{4.5}
Write $\one$ for the trivial character of $\Gamma$.  For every $\chi \in \hat{\Gamma}$
there is an idempotent
$$
\ex = |\Gamma|^{-1}\sum_{\gamma \in\Gamma} \chi(\gamma) \gamma^{-1} \in \C[\Gamma],
$$
and we define a nonnegative integer $r(\chi) = r(\chi,S)$ by
\begin{equation}
\label{(3.5)}
r(\chi) = \ord_{s=0}L_S(s,\bar\chi) = \dim_\C \ex \C U_{K} = 
\begin{cases}
|\{v \in S : \chi(\Gamma_v) = 1\}| & \text{if $\chi \ne \one$} \\
|S|-1 & \text{if $\chi = \one$}
\end{cases}
\end{equation}
where $\Gamma_v$ is the decomposition group of $v$ in $\Gamma$
(see for example \cite[Proposition I.3.4]{tate}).
If $r \ge 0$ is such that $S$ contains $r$ places that split completely in $K/k$, and $|S| \ge r+1$,   
then $r(\chi) \ge r$ for every $\chi \in \hat\Gamma$, and we let
$$
\rho_{K,r} := \sum_{\chi \in \hat{\Gamma}, r(\chi) \neq r} e_\chi \in\Q[\Gamma].
$$
\end{defn}

The following is the ``Stark conjecture over $\Z$'' that we will use.

\begin{Sconj}[= Conjecture B$'$ of \cite{stark}]
Suppose that:
\begin{enumerate}
\item
$S$ is a finite set of places of $k$ containing all archimedean places 
and all places ramifying in $K/k$,
\item
$T$ is a finite set of places of $K$, disjoint from $S$, such that
$U_{K,S,T}$ contains no roots of unity,
\item
$S' \subsetneq S$ contains only places that split completely in $K$.
\end{enumerate}
Let $r = |S'|$.  Then there is a unique element 
$$
\epsilon_{K} = \epsilon_{K,S,T,S'} \in (\lat{r}{}{U_{K,S,T}})[\rho_{K,r}] \otimes_\Gamma \wedge^r W^*_{K,S'}
$$ 
such that 
$$
\cR^\infty(\epsilon_{K}) = \lim_{s \to 0}s^{-r}\stick_{K/k}(s).
$$
\end{Sconj}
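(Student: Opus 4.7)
The plan has three steps: uniqueness of a real preimage $\epsilon_K^\R$, rationality, and integrality. For uniqueness I would extend $\cR^\infty$ to $\R$-coefficients and decompose by characters $\chi \in \hat\Gamma$. Dirichlet's unit theorem gives $\dim_\C e_\chi(U_K \otimes \C) = r(\chi)$, while the complete-splitting hypothesis on $S'$ forces $\dim_\C e_\chi(W_{K,S'} \otimes \C) = r$ for every $\chi$, so $e_\chi(\wedge^r U_K \otimes_\Gamma \wedge^r W^*_{K,S'}) \otimes \C$ vanishes when $r(\chi) < r$ and is one-dimensional when $r(\chi) = r$. A direct determinant computation then shows that $\cR^\infty \otimes \R$ restricts to an isomorphism on the $[\rho_{K,r}]$-component onto the corresponding component of $\R[\Gamma]$. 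Since $\chi(\stick_{K/k}(s))$ vanishes at $s=0$ to order $r(\chi) \ge r$ (with equality exactly when $r(\chi) = r$), the limit $\lim_{s \to 0} s^{-r}\stick_{K/k}(s)$ lies in $\R[\Gamma][\rho_{K,r}]$, so a unique real preimage $\epsilon_K^\R$ exists, proving uniqueness and reducing the conjecture to showing $\epsilon_K^\R$ lies in the correct integral lattice.

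The second step is the rationality assertion: $\epsilon_K^\R$ must lie in $(\lat{r}{}{U_K})[\rho_{K,r}] \otimes_\Gamma \wedge^r W^*_{K,S'} \otimes \Q$, which is the content of Tate's formulation of Stark's conjecture over $\Q$. Here I would exploit the functoriality of both sides of $\cR^\infty(\epsilon_K) = \lim_{s\to 0} s^{-r}\stick_{K/k}(s)$ under induction, inflation, and restriction through subfields of $K/k$, and apply Brauer's induction theorem to reduce the statement for a general irreducible $\chi$ to the case of one-dimensional characters of subgroups of $\Gamma$, which can then be handled directly.

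The final and hardest step is integrality: showing that the rational element actually lies in the lattice $\lat{r}{}{U_{K,S,T}} \otimes_\Gamma \wedge^r W^*_{K,S'}$ and not merely in its $\Q$-span. When $r=0$ this is the classical integrality of Stickelberger elements (Deligne--Ribet, Cassou-Nogu\`es); when $r=1$ and $k=\Q$ or $k$ is imaginary quadratic, cyclotomic and elliptic units supply an explicit construction that witnesses integrality. For general $k$ and $r$ no such canonical construction of generalized Stark units is known, and this is the principal obstacle: Brauer induction and the formal apparatus of the second step produce only rational answers, and the passage from rational to integral requires genuinely new arithmetic input. A plausible route is to work one prime at a time using Iwasawa theory, Euler and Kolyvagin systems, and $p$-adic $L$-functions --- the strategy pursued later in this paper in the core rank one case --- but this translates the problem into equally deep $p$-adic Gross--Stark type statements, which themselves remain open in the general setting.
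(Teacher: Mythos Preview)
The statement is a \emph{conjecture}, not a theorem, and the paper does not prove it. After stating the conjecture, the paper remarks only that uniqueness is automatic (because $\cR^\infty$ is injective on the relevant lattice; see \cite[Lemma~2.7]{stark}) and then lists the special cases in which it is known. The existence assertion --- that $\epsilon_K$ lies in the integral lattice $\lat{r}{}{U_{K,S,T}}$ rather than merely in its $\R$-span --- is the open content of the Rubin--Stark conjecture and is \emph{assumed} as a hypothesis throughout the rest of the paper, not established.

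Your step~1 (uniqueness) is correct and coincides with what the paper actually addresses. Steps~2 and~3 go beyond anything the paper attempts: you are outlining what a proof of the conjecture would require, and you correctly flag integrality as the principal obstacle. One genuine gap in step~2: since $\Gamma = \Gal(K/k)$ is abelian, every irreducible character of $\Gamma$ is already one-dimensional, so Brauer induction is vacuous here. The reduction you describe is the device used to pass from \emph{non-abelian} Artin $L$-functions to the abelian case; in the present abelian setting we are already at the base of that reduction, and the rationality statement for general $r$ is itself part of what remains conjectural, not something that ``can then be handled directly.''
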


By Conjecture $\St(K/k)$ we will mean the conjecture that $\St(K/k,S,T,S')$ 
holds for all choices of $S$, $T$, and $S'$ satisfying the hypotheses above.

Recall that $\wedge^rW^*_{K,S'}$ is free of rank one over $\ZGamma$.  
The uniqueness of $\epsilon_{K,S,T,S'}$ is automatic because $\cR^\infty$ is 
injective on $(\lat{r}{}{U_{K,S,T}})[\rho_{K,r}] \otimes_\Gamma \wedge^r W^*_{K,S'}$ 
(see for example \cite[Lemma 2.7]{stark}).

Conjecture $\St(K/k,S,T,S')$ is known to be true in the following cases:
\begin{itemize}
\item
$r = 0$ (in which case $\epsilon_{K} := \stick_{K/k}(0) \in \ZGamma$, 
which was proved independently by Deligne and Ribet, Cassou-Nogu\`es, and Barsky),
\item
$K/k$ is quadratic (\cite[Theorem 3.5]{stark}),
\item
$k = \Q$ (proved by Burns in \cite[Theorem A]{burns}),
\item
$S-S'$ contains a prime that splits completely in $K/k$ (\cite[Proposition 3.1]{stark}).
\end{itemize}

\begin{lem}
\label{xtra}
Suppose that $S-S'$ contains a place that splits completely in $K/k$, and 
$|S-S'| \ge 2$.  Then $\epsilon_K = 0$ satisfies Conjecture $\St(K/k,S,T,S')$.
\end{lem}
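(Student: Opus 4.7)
The plan is to show that under the hypotheses both sides of the defining identity $\cR^\infty(\epsilon_K) = \lim_{s\to 0}s^{-r}\stick_{K/k}(s)$ are automatically zero, and that the zero element lies in the required submodule.

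First I would count split places. Since $S' \subsetneq S$ and every $v \in S'$ splits completely in $K/k$, the hypothesis that $S - S'$ contains a place that splits completely implies that $S$ itself contains at least $r+1 = |S'|+1$ completely split places. Combined with $|S-S'|\ge 2$, this also gives $|S| \ge r+2$.

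Next I would compute $r(\chi)$ for every character. Using formula \eqref{(3.5)}: if $\chi \ne \one$, then every completely split place $v$ contributes to $\{v\in S:\chi(\Gamma_v)=1\}$ because its decomposition group is trivial, so $r(\chi)\ge r+1$; if $\chi=\one$, then $r(\chi)=|S|-1\ge r+1$. Hence $r(\chi)\ne r$ for every $\chi\in\hat\Gamma$, which forces
\[
\rho_{K,r} = \sum_{\chi\in\hat\Gamma,\, r(\chi)\ne r} e_\chi \;=\; \sum_{\chi\in\hat\Gamma} e_\chi \;=\; 1 \in \Q[\Gamma].
\]
Therefore $(\lat{r}{}{U_{K,S,T}})[\rho_{K,r}] = 0$, so the only candidate for $\epsilon_K$ inside $(\lat{r}{}{U_{K,S,T}})[\rho_{K,r}] \otimes_\Gamma \wedge^r W^*_{K,S'}$ is the zero element; in particular $\epsilon_K = 0$ lies in the required module.

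It remains to verify that $0$ is the correct value, i.e., that the analytic right-hand side vanishes. Applying any character $\chi$ gives $\chi(\stick_{K/k}(s)) = L_{S,T}(K/k;\bar\chi,s)$, which vanishes at $s=0$ to order $r(\chi)\ge r+1$. Hence each $\chi$-component of $s^{-r}\stick_{K/k}(s)$ tends to $0$, so $\lim_{s\to 0}s^{-r}\stick_{K/k}(s)=0 = \cR^\infty(0)$. This shows that $\epsilon_K = 0$ satisfies the defining property of Conjecture $\St(K/k,S,T,S')$. There is no real obstacle here; the whole content is the bookkeeping in step two identifying all $r(\chi)$ as strictly greater than $r$.
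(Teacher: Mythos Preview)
Your proof is correct and follows essentially the same route as the paper's own proof: the paper simply asserts that $r(\chi,S) > r$ for every $\chi$, deduces $\rho_{K,r}=1$ and $\lim_{s\to 0}s^{-r}\stick_{K/k}(s)=0$, and concludes. You have supplied the character-by-character verification of $r(\chi)>r$ (using the extra split place for $\chi\ne\one$ and $|S|\ge r+2$ for $\chi=\one$) that the paper leaves implicit, but the argument is the same.
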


\begin{proof}
In this case $r(\chi,S) > r = |S'|$ for every $\chi \in \hat{\Gamma}$, so 
$\lim_{s \to 0}s^{-r}\stick_{K/k}(s) = 0$ and 
$\rho_{K,r} = 1$ in Definition \ref{4.5} .  The lemma follows.
\end{proof}

\section{The Artin regulator}
\label{rcnf}

Fix a finite abelian extension $L/k$ of number fields, and an intermediate 
field $K$, $k \subset K \subset L$.  Let 
$G := \Gal(K/k)$, $H := \Gal(K/F)$ and $\Gamma := \Gal(F/k) = G/H$.  
$$
\xymatrix@R=15pt@C=5pt{
&&& L \\
\\
&&K \ar@{-}_{H}[uur] \\
k \ar@{-}_{\Gamma}[urr] \ar@{-}@/^1pc/^{G}[uuurrr]
}
$$
Fix a finite set $S$ of places of $k$ containing all archimedean places and 
all primes ramifying in $L/k$.  Fix a second finite set of primes $T$ of $k$, 
disjoint from $S$, such that $U_L = U_{L,S,T}$ contains no roots of unity.  

Suppose that we have a filtration $S' \subset S'' \subsetneq S$, where 
every $v \in S''$ splits completely in $K/k$, and every $v \in S'$ splits 
completely in $L/k$.
Let $r = |S'|$ and $s = |S''|-|S'|$.

For the rest of this section, we keep $S, S', S''$ and $T$ fixed, and we suppress 
them from the notation when possible.

For every subset $\Sigma \subset S''$, 
let $W_{K,\Sigma}$ denote the free abelian group on the set of primes of $K$ 
above $\Sigma$, and similarly with $L$ in place of $K$.  
Then $W_{K,\Sigma}$ is a free $\ZGamma$-module of rank $|\Sigma|$, 
we have 
\begin{equation} 
\label{sss}
W_{K,S''} = W_{K,S'} \oplus W_{K,S''-S'}, \quad 
   \wedge^{r+s} W_{K,S''} = \wedge^r W_{K,S'} \otimes_\Gamma \wedge^{s} W_{K,S''-S'},
\end{equation}
and the natural map $S_L \to S_K$, that takes a place of $L$ to its restriction to $K$, induces 
an isomorphism of free modules
\begin{equation} 
\label{sss'}
W_{L,S'} \otimes_G \ZGamma \isomm W_{K,S'}.
\end{equation}

Let $\sa{L/K} \in \Hom_\Gamma(U_K, W_{K,S''-S'} \otimes_\Z \A_H/\A_H^2)$ be the map of 
Definition \ref{def3.2}, with the augmentation ideal $\A_H$ as in \S\ref{units}.
Composition with $\sa{L/K}$ gives a $\ZGamma$-homo\-mor\-phism
\begin{equation}
\label{5.2}
W^*_{K,S''-S'} \too \Hom_\Gamma(U_K,\ZGamma \otimes_\Z \A_H/\A_H^2).
\end{equation}
Corollary \ref{Ipair} gives a canonical pairing
$$
(\lat{r+s}{}{U_K}) \times \w{s}\Hom_\Gamma(U_K,\ZGamma \otimes_\Z \A_H/\A_H^2) 
   \too (\lat{r}{}{U_K}) \otimes_\Z \A_H^{s}/\A_H^{s+1},
$$
and using \eqref{5.2} we can pull this back to a pairing
\begin{equation}
\label{labelthis}
(\lat{r+s}{}{U_K}) \times \w{s}W^*_{K,S''-S'} 
   \too (\lat{r}{}{U_K}) \otimes_\Z \A_H^{s}/\A_H^{s+1}.
\end{equation}

\begin{defn}
\label{5.3}
Tensoring both sides of \eqref{labelthis} with $\wedge^r W^*_{K,S'}$ and using \eqref{sss}, 
we define an algebraic regulator map $\cR_{L/K}^\Artin = \cR_{L/K,S,S',S''}^\Artin$
$$
\cR_{L/K}^\Artin : (\lat{r+s}{}{U_K}) \otimes_\Gamma \wedge^{r+s} W^*_{K,S''} 
   \too (\lat{r}{}{U_K}) \otimes_\Gamma \wedge^r W^*_{K,S'} \otimes_\Z \A_H^{s}/\A_H^{s+1}.
$$
\end{defn}

\begin{defn}
Let $\iota_{L/K} : \ZGamma \hookto \ZG$ denote the natural $\ZG$-module homomorphism 
that sends $\gamma \in \Gamma$ to $\sum_{g \in\gamma}g$, viewing $\gamma$ as an $H$-coset. 
Then $\iota_{L/K}$ is not a ring homomorphism, but rather 
\begin{equation}
\label{nothom}
\iota_{L/K}(\alpha)\iota_{L/K}(\beta) = [L:K]\iota_{L/K}(\alpha\beta).
\end{equation}
Note that $\iota_{L/K}$ is a $\ZG$-module isomorphism $\ZGamma \isom \ZG^H$.

As in \S\ref{units}, let 
$$
U_K^* := \Hom_\Gamma(U_K,\ZGamma), \quad U_L^* := \Hom_G(U_L,\ZG).
$$  
If $\varphi \in U_L^*$, then $\varphi(U_K) \subset \ZG^H$, and we define 
$\varphi^K = \iota_{L/K}^{-1} \circ \varphi|_{U_K} \in U_K^*$.

Let $j_{L/K} : U_K \hookto U_L$ denote the natural inclusion, and 
$\w{s}j_{L/K} : \w{s}U_K \to \w{s}U_L$ the induced map (if $s = 0$, we let 
$\w{0}j_{L/K} = \iota_{L/K} : \ZGamma \to \ZG$).
\end{defn}

\begin{lem}
\label{5.5}
$\w{r}j_{L/K}(\lat{r}{}{U_K}) \subset [L:K]^{\max\{0,r-1\}}\lat{r}{}{U_L}$.
\end{lem}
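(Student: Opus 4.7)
The plan is to exploit the defining property of the lattice $\lat{r}{}{M}$: an element $y \in \wedge^r M \otimes F$ lies in $\lat{r}{}{M}$ iff $\Psi(y) \in R$ for every $\Psi \in \wedge^r_R M^*$ (viewed as a homomorphism $\wedge^r M \otimes F \to F$ via the standard determinant pairing). Thus to prove the inclusion, it suffices to fix $x \in \lat{r}{}{U_K}$ and a decomposable $\Psi = \psi_1 \wedge \cdots \wedge \psi_r$ with $\psi_i \in U_L^*$, and show
$$
\Psi\bigl(\w{r}j_{L/K}(x)\bigr) \in [L:K]^{\max\{0,r-1\}}\,\ZG.
$$

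The key computation is the following. Fix $u \in U_K \subset U_L$. Since $\psi_i$ is $G$-equivariant and $u$ is fixed by $H$, the value $\psi_i(u)$ lies in $\ZG^H$, hence equals $\iota_{L/K}(\psi_i^K(u))$ by definition of $\psi_i^K$. Extending linearly and using that the evaluation of a wedge of functionals on a wedge of vectors is a determinant, for any decomposable $x = \wk{r}{u}$ with $u_i \in U_K \otimes \Q$ one obtains
$$
\Psi\bigl(\w{r}j_{L/K}(x)\bigr) = \det\bigl(\iota_{L/K}(\psi_i^K(u_j))\bigr).
$$
Iterating the relation $\iota_{L/K}(\alpha)\iota_{L/K}(\beta) = [L:K]\iota_{L/K}(\alpha\beta)$ from \eqref{nothom} shows that each product of $r$ values of $\iota_{L/K}$ in the expansion of this determinant is $[L:K]^{r-1}$ times $\iota_{L/K}$ of the corresponding product, so
$$
\Psi\bigl(\w{r}j_{L/K}(x)\bigr) = [L:K]^{r-1}\,\iota_{L/K}\bigl(\Psi^K(x)\bigr),
$$
where $\Psi^K := \psi_1^K \wedge \cdots \wedge \psi_r^K \in \wedge^r U_K^*$. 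By $\Q$-linearity this identity persists for arbitrary $x \in \wedge^r U_K \otimes \Q$.

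Now I use the hypothesis $x \in \lat{r}{}{U_K}$, which by the characterization recalled above gives $\Psi^K(x) \in \ZGamma$. Hence $\iota_{L/K}(\Psi^K(x)) \in \ZG$ and the displayed formula yields the required divisibility by $[L:K]^{r-1}$ when $r \geq 1$. The case $r=0$ is handled by the convention $\w{0}j_{L/K} = \iota_{L/K}$: we have $\lat{0}{}{U_K} = \ZGamma$ mapping into $\ZG^H \subset \ZG = \lat{0}{}{U_L}$, so no factor is needed, matching $\max\{0,r-1\}=0$.

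The only real subtlety (and the point requiring care) is the bookkeeping of the non-homomorphism \eqref{nothom}: one must verify the clean formula $\iota_{L/K}(\alpha_1)\cdots\iota_{L/K}(\alpha_r) = [L:K]^{r-1}\iota_{L/K}(\alpha_1\cdots\alpha_r)$ by induction, and make sure it interacts correctly with the signed sum defining the determinant so that the final expression factors through $\iota_{L/K}$ applied to a single element of $\ZGamma$. Once this is in hand the argument is essentially formal.
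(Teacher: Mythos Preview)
Your argument is correct and is essentially the same as the paper's: both proofs test $\w{r}j_{L/K}(x)$ against decomposable $\bvarphi = \varphi_1\wedge\cdots\wedge\varphi_r \in \w{r}U_L^*$, use $\varphi_i|_{U_K} = \iota_{L/K}\circ\varphi_i^K$ together with \eqref{nothom} and the determinant formula \eqref{detform} to obtain $\bvarphi(\w{r}j_{L/K}(x)) = [L:K]^{r-1}\iota_{L/K}(\bvarphi^K(x))$, and then conclude from $\bvarphi^K(x)\in\ZGamma$. The paper packages this identity as a commutative square and notes explicitly that such $\bvarphi$ generate $\w{r}U_L^*$, but otherwise the two arguments coincide.
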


\begin{proof}
If $r = 0$ there is nothing to prove, so assume $r \ge 1$.  
Suppose $\varphi_1,\ldots,\varphi_r \in U_L^*$.  Let 
$\bvarphi = \wk{r}{\varphi} \in \w{r}U_L^*$, and 
$\bvarphi^K = \wk{r}{\varphi^K} \in \w{r}U_K^*$.  
Using \eqref{nothom} and the evaluation \eqref{detform} of $\bvarphi$ and $\bvarphi^K$ as 
determinants, we have a commutative diagram
$$
\xymatrix{
\lat{r}{}{U_L} \ar@{^(->}[r] & {\w{r} U_L}\otimes\Q \ar^-{\bvarphi}[r] & \Q[G] \\
\lat{r}{}{U_K} \ar@{^(->}[r] & {\w{r} U_K}\otimes\Q \ar^-{\bvarphi^K}[r]\ar^{\w{r}j_{L/K}}[u] & \Q[\Gamma] \ar_{[L:K]^{r-1}\iota_{L/K}}[u]
}
$$
By definition $\bvarphi^K(\lat{r}{}{U_K}) \subset \ZGamma$, 
so $\bvarphi(\w{r}j_{L/K}(\lat{r}{}{U_K})) \subset [L:K]^{r-1}\ZG$.  Since these $\bvarphi$ 
generate $\w{r}U_L^*$, this proves the lemma.
\end{proof}

\begin{lem}
\label{5.6}
The map $[L:K]^{-\max\{0,r-1\}}\w{r}j_{L/K} : \lat{r}{}{U_K} \to \lat{r}{}{U_L}$ 
and the inverse of the isomorphism \eqref{sss'}
induce a map
$$
\j_{L/K} : (\lat{r}{}{U_K}) \otimes_\Gamma \wedge^r W^*_{K,S'} 
   \too (\lat{r}{}{U_L}) \otimes_G \wedge^r W^*_{L,S'}.
$$
\end{lem}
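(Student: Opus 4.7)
The plan is to construct $\j_{L/K}$ as a composition of two canonical maps: one that transports the tensor factor $\wedge^r W^*_{K,S'}$ using the inverse of \eqref{sss'}, and one that transports the tensor factor $\lat{r}{}{U_K}$ using Lemma \ref{5.5}.

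First I would handle the $W^*$ factor. Taking $\ZGamma$-duals of the isomorphism \eqref{sss'} gives an isomorphism of free $\ZGamma$-modules
$$
W^*_{K,S'} \isomm W^*_{L,S'} \otimes_G \ZGamma.
$$
Since $W^*_{L,S'}$ is free of rank $r$ over $\ZG$, the $r$-th exterior power commutes with this base change, producing
$$
\wedge^r W^*_{K,S'} \isomm \wedge^r W^*_{L,S'} \otimes_G \ZGamma.
$$
Combining with the standard identity $A \otimes_\Gamma (B \otimes_G \ZGamma) \cong A \otimes_G B$ (valid for any $\ZGamma$-module $A$, regarded as a $\ZG$-module via $\ZG \onto \ZGamma$, and any $\ZG$-module $B$), this yields a canonical identification
$$
(\lat{r}{}{U_K}) \otimes_\Gamma \wedge^r W^*_{K,S'} \isomm (\lat{r}{}{U_K}) \otimes_G \wedge^r W^*_{L,S'},
$$
where on the right $\lat{r}{}{U_K}$ is viewed as a $\ZG$-module on which $H$ acts trivially.

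Next I would handle the $U_K$ factor. By Lemma \ref{5.5}, the map $[L:K]^{-\max\{0,r-1\}} \w{r}j_{L/K}$ restricts to a well-defined homomorphism $\lat{r}{}{U_K} \to \lat{r}{}{U_L}$, and this map is $\ZG$-linear because $j_{L/K} : U_K \hookto U_L$ is $G$-equivariant with $H$ fixing $U_K$ pointwise. Tensoring over $\ZG$ with $\wedge^r W^*_{L,S'}$ produces
$$
(\lat{r}{}{U_K}) \otimes_G \wedge^r W^*_{L,S'} \too (\lat{r}{}{U_L}) \otimes_G \wedge^r W^*_{L,S'},
$$
and composing with the identification above defines $\j_{L/K}$.

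The only potentially delicate point is the interchange of $r$-th exterior power with the base change $(-) \otimes_G \ZGamma$ that enters in passing from $W^*_{K,S'}$ to $W^*_{L,S'}$. This should be essentially formal once one notes that both $\wedge^r W^*_{K,S'}$ and $\wedge^r W^*_{L,S'} \otimes_G \ZGamma$ are free $\ZGamma$-modules of rank one, and the isomorphism between them can be exhibited explicitly by choosing a place of $L$ above each $v \in S'$ and matching the resulting determinantal bases. The rest is a formal composition of canonical maps.
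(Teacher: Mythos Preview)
Your proposal is correct and follows essentially the same approach as the paper: the paper also first identifies $(\lat{r}{}{U_K}) \otimes_\Gamma \wedge^r W^*_{K,S'}$ with $(\lat{r}{}{U_K}) \otimes_G \wedge^r W^*_{L,S'}$ via \eqref{sss'} (using the chain $(\lat{r}{}{U_K}) \otimes_\Gamma \wedge^r W^*_{K,S'} = (\lat{r}{}{U_K}) \otimes_G (\wedge^r W^*_{L,S'} \otimes_G \ZGamma) = (\lat{r}{}{U_K}) \otimes_G \wedge^r W^*_{L,S'}$), and then applies Lemma \ref{5.5}. Your version is slightly more explicit about the dualization and the exterior-power/base-change compatibility, but the argument is the same.
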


\begin{proof}
Using \eqref{sss'} for the second equality, we have
\begin{align*}
(\lat{r}{}{U_K}) \otimes_\Gamma \wedge^r W^*_{K,S'} &= (\lat{r}{}{U_K}) \otimes_G \wedge^r W^*_{K,S'} \\
   &= (\lat{r}{}{U_K}) \otimes_G (\wedge^r W^*_{L,S'} \otimes_G \ZGamma) \\
   &= ((\lat{r}{}{U_K}) \otimes_G \ZGamma) \otimes_G \wedge^r W^*_{L,S'} \\
   &= (\lat{r}{}{U_K}) \otimes_G \wedge^r W^*_{L,S'}.
\end{align*}
Now the lemma follows from Lemma \ref{5.5}.
\end{proof}

\section{The conjecture}
\label{statement}

Let $L/K/k$, $G$, $H$, $\Gamma$, $S$, $T$, $S'$, $S''$ be as in \S\ref{rcnf}.
The hypotheses 
of Conjectures $\St(L/k,S,T,S')$ and $\St(K/k,S,T,S'')$ are all satisfied, and 
if those conjectures both hold they provide us with elements
\begin{align*}
\epsilon_L &:= \epsilon_{L,S,T,S'} \in (\lat{r}{}{U_L})[\rho_{L,r}] \otimes_G \wedge^r W^*_{L,S'} 
   \subset \w{r}{U_L} \otimes_G \wedge^r W^*_{L,S'} \otimes_\Z \Q, \\
   \epsilon_K &:= \epsilon_{K,S,T,S''} \in 
   (\lat{r+s}{}{U_K})[\rho_{K,r+s}] \otimes_\Gamma \wedge^{r+s} W^*_{K,S''} \\
      &\hskip2.5in\subset \w{r+s}{U_K} \otimes_\Gamma \wedge^{r+s} W^*_{K,S''} \otimes_\Z \Q.
\end{align*}

\begin{defn}
If $M$ is a $\ZH$-module, define the {\em twisted trace}
$$
\Tw_{L/K} : M \too M \otimes_\Z \ZH
$$
by $\Tw_{L/K}(m) := \sum_{h \in H} m^{h} \otimes h^{-1} \in M \otimes_\Z \ZH$.
\end{defn}

We will think of $\Tw_{L/K}(\epsilon_L)$ as a generalized Stickelberger element.
The following conjecture is inspired by conjectures in 
\cite{mazurtate, gross, grossletter, darmon}.

\begin{conj}
\label{conj}
With $(L/K/k,S,T,S',S'')$ as in \S\ref{rcnf}, 
suppose that Conjectures $\St(L/k,S,T,S')$ and $\St(K/k,S,T,S'')$ both hold.
\begin{enumerate}
\item
``Order of vanishing'':
$$
\Tw_{L/K}(\epsilon_{L}) \in (\lat{r}{}{U_L}) \otimes_G \wedge^r W^*_{L,S'} \otimes_\Z \A_H^{s}.
$$  
\item
``Leading term'': with the maps $\j_{L/K}$ of Lemma \ref{5.6} and $\cR_{L/K}^\Artin$ 
of Definition \ref{5.3}, we have
$$
\Tw_{L/K}(\epsilon_{L}) \equiv (\j_{L/K}\otimes 1)(\cR_{L/K}^\Artin(\epsilon_K)) 
$$
in $(\lat{r}{}{U_L}) \otimes_G \wedge^r W^*_{L,S'} \otimes_\Z \A_H^{s}/\A_H^{s+1}$.
\end{enumerate}
\end{conj}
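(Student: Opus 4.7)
My approach is to place Conjecture \ref{conj} inside the Euler-system and Kolyvagin-system framework of \cite{kolysys, gen.kolysys}, generalizing the method used in \cite{darmon.conj} for Darmon's conjecture. The key structural input is that as the top field varies over abelian extensions of $k$ containing $K$, the Rubin-Stark elements $\epsilon_L$ form an Euler system: norm maps down the tower produce local Euler factors $1-\Frob_v^{-1}$ at the places where ramification or splitting behavior changes. The twist $\Tw_{L/K}$ is precisely designed to convert this $H$-covariant information into an $\A_H$-adic expansion of a generalized Stickelberger element.

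For part (i), the order of vanishing, I first use that every $v \in S''-S'$ splits completely in $K/k$, so $\Frob_v \in H$ and hence $1-\Frob_v^{-1} \in \A_H$. The norm-compatibility of $\epsilon_L$ (descending from $L$ through intermediate fields over $K$) should therefore express $\epsilon_L$, up to torsion, as a product of $s$ factors from $\A_H$ acting on a $K$-level Stark element. Applying $\Tw_{L/K}$ internalizes these factors into $\A_H$ tensors, landing $\Tw_{L/K}(\epsilon_L)$ in $(\lat{r}{}{U_L}) \otimes_G \wedge^r W^*_{L,S'} \otimes_\Z \A_H^s$. The technical subtlety is to carry this out at the level of the integral lattice $\lat{r}{}{U_L}$ rather than only in $\w{r}U_L \otimes \Q$, which I expect to follow by combining Lemma \ref{5.5} with the exterior-algebra manipulations from the appendix.

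For part (ii), the leading term, both sides are read modulo $\A_H^{s+1}$ as the first nontrivial graded piece of an $\A_H$-adic expansion. I would reduce the identity to a comparison of two functionals on $\w{r}U_L \otimes_G \wedge^r W^*_{L,S'}$: one coming from $\Tw_{L/K}(\epsilon_L)$, the other from applying $\cR_{L/K}^\Artin$ to $\epsilon_K$ and then $\j_{L/K}$. I interpret both as evaluations of a Kolyvagin-type system built from the data $(L/K/k, S, T, S', S'')$: the left arises from the Euler system of Rubin-Stark elements for varying top field, the right from a ``Stark system'' obtained by pairing $\epsilon_K$ against dual local Artin symbols $\sa{L/K}$ via Corollary \ref{Ipair}. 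Invoking the structure theorem for Kolyvagin systems from \cite{gen.kolysys}, under a core rank one hypothesis the ambient module of such systems is free of rank one, so the two systems coincide provided they agree at the trivial index; and that base-case agreement should reduce to Conjecture $\St(K/k,S,T,S'')$ itself together with a direct identification of $\cR^\infty$ with $\cR_{L/K}^\Artin$ applied to $\epsilon_K$.

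The hardest step, and the main obstacle to the full conjecture, is the passage from core rank $s = 1$ to higher core rank. In the rank-one case the Kolyvagin-system module is free of rank one, so a single base-case identity propagates to the entire statement. For $s \ge 2$ this rigidity fails: one loses the uniqueness of systems up to a scalar, and the leading term mod $\A_H^{s+1}$ then has several independent components that must be matched simultaneously. I would expect a proof for general $s$ to need either an auxiliary reduction to rank one (by twisting with characters, or by climbing to a suitable sub-tower), a multivariable extension of the Kolyvagin-system structure theorem that incorporates the exterior powers naturally, or additional hypotheses on $L$-values ensuring that a rank-one reduction still goes through.
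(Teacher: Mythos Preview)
This statement is a conjecture, and the paper does not prove it in full generality; it obtains only partial results (Theorem \ref{ordvan} for part (i) in a restricted tower, Theorems \ref{9.7} and \ref{11.6} for part (ii) under strong hypotheses). Your proposal is a strategic outline rather than a proof, and on the whole it matches the paper's approach to those partial results. Two points deserve comment.

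For part (i), your sketch is imprecise and would not go through as written. You suggest that norm-compatibility ``should therefore express $\epsilon_L$, up to torsion, as a product of $s$ factors from $\A_H$ acting on a $K$-level Stark element,'' but the norm relation (Lemma \ref{esrel}) only gives the \emph{trace} of $\epsilon_L$ down to a subfield as such a product, not $\epsilon_L$ itself; and applying $\Tw_{L/K}$ does not simply ``internalize'' those factors. The paper's actual argument (Theorem \ref{ordvan}) works only for $L = K(\n)$ in a specific family and proceeds by induction on $\nu(\n)$ via the explicit combinatorial identity of Lemma \ref{ovlem}, which rewrites a certain sum involving $\prod_{\q\mid\n}(\pi_\q(\gamma)-1)$ in terms of twisted traces $\Tw_{K(\d)/K}(\epsilon_\d)$ for $\d\mid\n$. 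Every term in that identity except $\Tw_{K(\n)/K}(\epsilon_\n)$ lies in the correct power of $\A_{H(\n)}$ by the induction hypothesis, which forces the remaining term in as well. This is the mechanism of Darmon \cite[Lemma 8.1]{darmon}, not the direct factorization you describe, and no argument for (i) in the generality of the conjecture is given.

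For part (ii), your outline is essentially what the paper does in \S\ref{r=1}: build a Kolyvagin system $\stesys$ from the Euler system of Rubin--Stark elements, build a second Kolyvagin system $\boldsymbol{\beta}^\St$ from the Stark system $\stsys^\chi$ via the regulator $\cR^\Artin$, and invoke the rank-one structure theorem for $\KS_1(\Mchi)$ together with agreement at $\n=1$ to conclude equality for all $\n$. Your diagnosis of the higher-rank obstruction is correct, and the paper's workaround in \S11 is precisely one of the options you list: project to rank one via $\tilde\Phi$ for $\Phi \in \wedge^{r-1}V_\infty^*$, obtaining only the $\tilde\Phi$-projection of the leading term formula, and only under additional hypotheses (Leopoldt's conjecture for $K$, $k$ totally real with $S'$ all archimedean places, $S$ containing no primes above $p$).
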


\begin{rem}
Suppose that $k = \Q$, $K$ is a real quadratic field, and $L = K(\bmu_n)^+$ 
(the real subfield of the extension of $K$ generated by the $n$-th roots of unity) 
with $n$ prime to the conductor of $K/\Q$.  
Let $S' := \{\infty\}$ and $S'' := \{\infty\} \cup \{\ell : \ell \mid n\}$ (so $r=1$).
In this case 
$\St(L/k,S,T,S')$ and $\St(K/k,S,T,S'')$ are known to hold, 
and Conjecture \ref{conj} is essentially the same as Darmon's conjecture in 
\cite[\S4]{darmon}.  This case was studied in detail in \cite{darmon.conj}.

See \S\ref{r=1} for more about the case $r=1$.  
\end{rem}

\begin{prop}
\label{grossequiv}
If $r=0$ then Conjecture \ref{conj} is equivalent to the conjecture 
of Gross in \cite[Conjecture 7.6]{gross} and \cite{grossletter} 
(see Conjecture $\tilde{A}_\Z(L/K/k,S,T,s)$ of \cite{popescu}).
\end{prop}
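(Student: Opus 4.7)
When $r=0$ the set $S'$ is empty, so $\wedge^0 U_L = \Z[G]$ and $\wedge^0 W^*_{L,S'} = \Z[G]$; the element $\epsilon_L$ reduces to the classical Stickelberger element $\theta_{L/k,S,T}(0) \in \Z[G]$. Similarly $\wedge^0 U_K \otimes_\Gamma \wedge^0 W^*_{K,S'} = \Z[\Gamma]$, and in Lemma \ref{5.6} the factor $[L:K]^{-\max\{0,r-1\}}$ equals $1$, so $\j_{L/K}$ collapses to the twisted-trace map $\iota_{L/K} : \Z[\Gamma] \hookrightarrow \Z[G]$. The plan is to unwind Definitions \ref{def3.2} and \ref{5.3} in this degenerate situation, check that the resulting pairing is literally the Gross regulator, and then match the two sides of Conjecture \ref{conj} with the two halves of Gross's conjecture $\tilde A_\Z(L/K/k,S,T,s)$.

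First I would compute $\cR^\Artin_{L/K}(\epsilon_K)$ concretely. Choosing places $w_1,\dots,w_s$ of $K$ lying over $S''$, with dual basis $w_1^*,\dots,w_s^*$ of $W^*_{K,S''}$, and writing $\epsilon_K = \sum_\alpha (\wk{s}{u^\alpha}) \otimes (\wk{s}{w^*})$, the pairing of Corollary \ref{Ipair}, pulled back via $\sa{L/K}$, yields
$$
\cR^\Artin_{L/K}(\epsilon_K) \;=\; \sum_\alpha \det\bigl([u^\alpha_j,L_{w_i}/K_{w_i}]-1\bigr)_{1\le i,j\le s} \;\in\; \Z[\Gamma]\otimes_\Z \A_H^s/\A_H^{s+1}.
$$
This is exactly the image in $\A_H^s/\A_H^{s+1}$ of the Gross regulator formed from the Rubin--Stark units in $\epsilon_K$ together with the local Artin symbols at the primes of $S''$.

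Next I would analyze $\Tw_{L/K}(\theta_{L/k}(0))$. Using the formula $gh = g+g(h-1)$ and the fact that $H$ acts on $\Z[G]$ by right multiplication, one sees that the image of $\Tw_{L/K}(\theta_{L/k}(0))$ in $\Z[G]\otimes \A_H^s/\A_H^{s+1}$ is the usual $s$-th graded piece of $\theta_{L/k}(0)$ that appears on the left-hand side of Gross's conjecture, and that part~(i) of Conjecture \ref{conj} translates into the Gross order-of-vanishing assertion $\theta_{L/k}(0)\in \A_H^s\cdot\Z[G]$.

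Finally I would compare the two sides. After identifying $\iota_{L/K}(\cR^\Artin_{L/K}(\epsilon_K))$ with the Gross regulator in the formulation of Popescu's Conjecture $\tilde A_\Z(L/K/k,S,T,s)$, the congruence in Conjecture \ref{conj}(ii) becomes Gross's predicted equality
$$
\theta_{L/k}(0) \;\equiv\; \mathrm{Reg}_G(L/K)\cdot \iota_{L/K}(\epsilon_{K,S,T,S''}) \pmod{\A_H^{s+1}\Z[G]}.
$$
The substantive obstacle is not conceptual but book-keeping: one must match signs and normalizations in the three places where the comparison can go wrong — in the passage from $W^*_{K,S''}$ to $\Hom_\Gamma(U_K,\Z[\Gamma]\otimes \A_H/\A_H^2)$ via $\sa{L/K}$, in the choice of ordering of the primes in $S''$ that enters the wedge product versus the determinant, and in the non-multiplicative map $\iota_{L/K}$ (recall \eqref{nothom}) that must be handled carefully to recover Popescu's exact statement. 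Both directions of the equivalence follow once the dictionary is set up, since in each formulation the content is the single congruence above together with the order-of-vanishing statement.
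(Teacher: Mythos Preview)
Your overall strategy matches the paper's: when $r=0$ one has $\epsilon_L=\stick_{L/k}(0)\in\ZG$, $\j_{L/K}=\iota_{L/K}$, and the Artin regulator collapses to the Gross regulator determinant, so the task is purely to set up a dictionary. Where your plan falls short of the paper's proof is in treating that dictionary as mere book-keeping.

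The substantive point you skip is this. Conjecture \ref{conj}(i) is a statement about $\Tw_{L/K}(\stick_{L/k}(0))$ inside $\ZG\otimes_\Z\ZH$, while Gross's order-of-vanishing is a statement about $\stick_{L/k}(0)$ inside $\ZG$ (namely $\stick_{L/k}(0)\in J_H^s$, with $J_H=\ZG\A_H$). The passage between the two is the map $\psi:\ZG\to\ZG\otimes_\Z\ZH$, $\psi(\rho)=\sum_{h\in H}h\rho\otimes h^{-1}$, and one needs
\[
\stick_{L/k}(0)\in J_H^s \iff \psi(\stick_{L/k}(0))\in\ZG\otimes_\Z\A_H^s.
\]
The forward implication is easy, but the reverse requires knowing that $\psi$ induces an \emph{injection} $J_H^t/J_H^{t+1}\hookrightarrow\ZG\otimes_\Z\A_H^t/\A_H^{t+1}$ for each $t$. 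The paper isolates this as Lemma \ref{pregrossequiv}(iv): one checks that on graded pieces $\psi$ factors as the isomorphism $J_H^t/J_H^{t+1}\cong\ZGamma\otimes_\Z\A_H^t/\A_H^{t+1}$ of Lemma \ref{auglem}(ii) followed by $\iota_{L/K}\otimes 1$, and the latter is injective because $\iota_{L/K}$ is a split $\Z$-injection. The same injectivity is what makes the leading-term congruences equivalent rather than merely one implying the other. Your third paragraph asserts this translation without proof, and your final paragraph lists sign and ordering conventions as the obstacles while omitting the filtration-compatibility/injectivity argument that is the actual content. (A minor side issue: your displayed congruence at the end, with ``$\mathrm{Reg}_G(L/K)\cdot\iota_{L/K}(\epsilon_{K,S,T,S''})$'', does not type-check---$\epsilon_K$ already lives in an exterior power, and the right-hand side of the paper's reformulation is simply $\cR^{\Artin}_{L/K}(\epsilon_K)$ viewed in $J_H^s/J_H^{s+1}$ via Lemma \ref{auglem}(ii).)
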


Before proving Proposition \ref{grossequiv}, we have the following two lemmas.
Let $J_H := \ZG \A_H$ be the kernel of the natural projection $\ZG \onto \ZGamma$.

\begin{lem}
\label{auglem}
There are natural isomorphisms
\begin{enumerate}
\item
$H \isom \A_H/\A_H^2$, given by $h \mapsto (h-1)$,
\item
$\ZGamma \otimes_\Z \A_H^r/\A_H^{r+1} \isom J_H^r/J_H^{r+1}$ for every $r \ge 0$, 
given by $\gamma \otimes \alpha \mapsto \alpha\bar\gamma$, 
where $\bar\gamma$ is any lift of $\gamma$ to $\Z[G]$.
\end{enumerate}
\end{lem}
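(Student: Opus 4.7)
Both statements are classical facts about the augmentation ideal of an abelian group ring; the plan is to verify them by direct computation, using that $G$ is abelian so that $\A_H$ is a two-sided ideal of $\ZG$ and that $\ZG$ is a free $\ZH$-module.

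For (i), I would check that $h \mapsto (h-1) \bmod \A_H^2$ defines a group homomorphism $H \to \A_H/\A_H^2$, using the identity
$$
(gh-1) - (g-1) - (h-1) = (g-1)(h-1) \in \A_H^2.
$$
Surjectivity is immediate because $\A_H$ is additively generated by $\{h-1 : h \in H\}$. For injectivity, I would appeal to the standard isomorphism $\A_H/\A_H^2 \cong H^{\mathrm{ab}}$, which equals $H$ since $H$ is abelian; alternatively one constructs an inverse directly via the universal property that a derivation of $H$ into a trivial $H$-module is the same as a group homomorphism.

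For (ii), the key input is the module-theoretic structure of $\ZG$ over $\ZH$. Since $H$ is normal in $G$, the ideal $J_H = \ZG \cdot \A_H$ coincides with $\A_H \cdot \ZG$, so $J_H^r = \ZG \cdot \A_H^r$ for all $r \ge 0$. Choosing any set-theoretic section $s : \Gamma \to G$ with $s(1) = 1$ and writing $\bar\gamma := s(\gamma)$, the elements $\{\bar\gamma\}_{\gamma \in \Gamma}$ form a $\ZH$-basis of $\ZG$, so
$$
J_H^r = \bigoplus_{\gamma \in \Gamma} \bar\gamma\, \A_H^r \qquad\text{and}\qquad J_H^r/J_H^{r+1} \;\cong\; \bigoplus_{\gamma \in \Gamma} \bar\gamma\,(\A_H^r/\A_H^{r+1}),
$$
which as an abelian group is $\ZGamma \otimes_\Z \A_H^r/\A_H^{r+1}$, matching the map $\gamma \otimes \alpha \mapsto \alpha\bar\gamma$ in the statement. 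I would then verify two compatibilities: (a) independence of the chosen lift, since two lifts of $\gamma$ differ by an element of $J_H$ and therefore by an element of $J_H^{r+1}$ after multiplication by $\alpha \in \A_H^r$; and (b) $\ZGamma$-equivariance, using the same observation applied to $\overline{\gamma\gamma'} - \bar\gamma\bar{\gamma'} \in J_H$.

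\textbf{Main obstacle.} There is no substantial obstacle --- both parts follow formally once one has set up the freeness of $\ZG$ over $\ZH$. The only real care needed is bookkeeping: keeping straight the $\ZGamma$-action on the first tensor factor versus the $\ZG$-module structure on $J_H^r/J_H^{r+1}$, and confirming they agree under the proposed map.
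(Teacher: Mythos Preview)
Your proposal is correct. The paper's own proof consists of the single sentence ``This is a standard exercise,'' so your argument is precisely the kind of direct verification one would supply to justify that claim; in particular, your use of the freeness of $\ZG$ over $\ZH$ via coset representatives to decompose $J_H^r$ and read off the isomorphism in (ii) is the natural route.
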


\begin{proof}
This is a standard exercise.
\end{proof}

\begin{lem}
\label{pregrossequiv}
Define $\psi : \ZG \to \ZG \otimes_\Z \ZH$ by 
$\psi(\rho) = \sum_{h \in H} h\rho \otimes h^{-1}$.  
Then:
\begin{enumerate}
\item
$\psi$ is an injective $\ZG$-module homomorphism (with $G$ acting on the left 
on $\ZG \otimes_\Z \ZH$),
\item
$\psi(h\rho) = \psi(\rho)h$ for every $h \in H$,
\item
$\psi(J_H^t)\subset \ZG \otimes_\Z \A_H^t$ for every $t \ge 0$,
\item
for every $t \ge 0$ there is a commutative diagram
$$
\xymatrix@C=50pt{
J_H^t/J_H^{t+1} \ar@{^(->}^-{\psi}[dr] \\
\ZGamma \otimes_\Z \A_H^t/\A_H^{t+1} \ar@{^(->}^{\iota_{L/K} \otimes 1}[r] 
   \ar^{\cong}[u] & \ZG \otimes_\Z \A_H^t/\A_H^{t+1}
}
$$
where the vertical map is the isomorphism of Lemma \ref{auglem}(ii).
\end{enumerate}
\end{lem}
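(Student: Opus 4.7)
The plan is to verify the four assertions by unwinding the definition of $\psi$ and exploiting that $G$ is abelian (since $L/k$ is). The crucial observation, which delivers both (iii) and (iv) at once, is that (ii) extends by $\Z$-linearity in its argument to
$$
\psi(x\rho) = \psi(\rho)\cdot(1\otimes x) \quad\text{for all } x\in\ZH \text{ and } \rho\in\ZG,
$$
and that fixing a set of coset representatives $\{g_\gamma\}_{\gamma\in\Gamma}$ for $H$ in $G$ gives the decomposition $J_H^t=\bigoplus_{\gamma\in\Gamma}\A_H^t\cdot g_\gamma$ (equivalently, $\rho\in J_H^t$ iff each $\ZH$-component of $\rho$ in this decomposition lies in $\A_H^t$).

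For (i), $\ZG$-linearity is immediate from commutativity of $G$: $\psi(g\rho)=\sum_h hg\rho\otimes h^{-1}=g\psi(\rho)$. Injectivity follows by viewing the second tensor factor through the $\Z$-basis $\{h^{-1}\}_{h\in H}$: the coefficient of $h^{-1}$ in $\psi(\rho)$ is $h\rho$, and these vanish only if $\rho=0$. For (ii), substituting $h''=h'h$ in the sum defining $\psi(h\rho)$ and using commutativity of $H$ gives $\psi(h\rho)=\sum_{h''}h''\rho\otimes h''^{-1}h=\psi(\rho)\cdot h$; the displayed identity above then follows by $\Z$-linear extension in $x$.

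Applying the displayed identity with $x=\alpha\in\A_H^t$ and $\rho=g_\gamma$ produces
$$
\psi(\alpha g_\gamma)=\psi(g_\gamma)(1\otimes\alpha)=\sum_{h\in H}hg_\gamma\otimes h^{-1}\alpha \;\in\; \ZG\otimes\A_H^t,
$$
which combined with the decomposition of $J_H^t$ yields (iii). For (iv), the main technical step is to reduce this formula modulo $\ZG\otimes\A_H^{t+1}$: since $h^{-1}\alpha-\alpha=(h^{-1}-1)\alpha\in\A_H\cdot\A_H^t=\A_H^{t+1}$, the sum collapses to $(\sum_h hg_\gamma)\otimes\alpha=\iota_{L/K}(\gamma)\otimes\alpha$, which is exactly $(\iota_{L/K}\otimes 1)$ applied to the preimage $\gamma\otimes\alpha$ of the class of $\alpha g_\gamma$ under the isomorphism of Lemma~\ref{auglem}(ii). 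I expect the main obstacle to be verifying the injectivity claim for $\iota_{L/K}\otimes 1$, once one allows $\A_H^t/\A_H^{t+1}$ to have $\Z$-torsion; for this I would note that $\ZG/\iota_{L/K}(\ZGamma)=\ZG/\ZG^H$ is torsion-free by a standard averaging argument (if $m\rho\in\ZG^H$ then $m(h\rho-\rho)=0$, so $h\rho=\rho$), whence $\iota_{L/K}$ is a split $\Z$-linear injection and the claim follows by tensoring.
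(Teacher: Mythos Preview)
Your proof is correct and follows essentially the same route as the paper: (i) and (ii) are immediate, (iii) is deduced from (ii) via the relation $\psi(x\rho)=\psi(\rho)(1\otimes x)$, and (iv) is checked by reducing $\psi(\alpha g_\gamma)$ modulo $\ZG\otimes\A_H^{t+1}$ using $(h^{-1}-1)\alpha\in\A_H^{t+1}$. The one place you go further than the paper is in justifying the injectivity of $\iota_{L/K}\otimes 1$ (the paper simply invokes ``the injectivity of $\iota_{L/K}$''); your splitting argument via torsion-freeness of $\ZG/\ZG^H$ is a clean way to fill that gap.
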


\begin{proof}
The first two assertions are clear, and (iii) follows from (ii).

To check the commutativity of the diagram in (iv), take 
$\gamma \in \Gamma$ and $\alpha \in \A_H^t$.  Using (ii), the image of $\gamma \otimes \alpha$  
in $\ZG \otimes_\Z \A_H^t/\A_H^{t+1}$ by the upper path is 
$$
\psi(\alpha\bar\gamma) = 
   \sum_{h \in H} h \alpha\bar\gamma \otimes h^{-1} 
   = \sum_{h \in H}\bar\gamma h \otimes \alpha h^{-1},
$$ 
where $\bar\gamma$ is any lift of $\gamma$ to $G$.
The image of $\gamma \otimes \alpha$ by the lower path is $\sum_{h \in H}\bar\gamma h \otimes \alpha$.  
Since $\alpha(h^{-1}-1) \in \A_H^{t+1}$ for every $h$, these are equal in 
$\ZG \otimes_\Z \A_H^t/\A_H^{t+1}$.  
This shows that the diagram in (iv) commutes, and 
the injectivity of the map induced by $\psi$ now follows from the injectivity of $\iota_{L/K}$.
\end{proof}

\begin{proof}[Proof of Proposition \ref{grossequiv}]
Let $\psi$ be as in Lemma \ref{pregrossequiv}.
If $r=0$, then $\epsilon_L = \stick_{L/k}(0)$, so $\Tw_{L/k}(\epsilon_L) = \psi(\stick_{L/k}(0))$.  
Thus by Lemma \ref{pregrossequiv}(iii,iv), 
Conjecture \ref{conj} is equivalent in this case to the assertions
\begin{enumerate}
\item
$\stick_{L/k}(0) \in J_H^s$,
\item
$\stick_{L/k}(0) \equiv \cR_{L/K}^\Artin(\epsilon_K) \pmod{J_H^{s+1}}$, 
\end{enumerate}
where we view 
$\cR_{L/K}^\Artin(\epsilon_K) \in J_H^{s+1}$ via the isomorphism of Lemma \ref{auglem}(ii).  
This is the usual statement of Gross' conjecture 
\cite[Conjecture $\tilde{A}_\Z(L/K/k,S,T,s)$]{popescu}.
\end{proof}

\begin{prop}
\label{5.15}
If $s=0$ (i.e., if $S'' = S'$), then Conjecture \ref{conj} is true.
\end{prop}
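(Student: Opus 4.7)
Part (i) is vacuous: when $s=0$, $\A_H^0=\ZH$, which is by definition the ambient codomain of $\Tw_{L/K}$. For part (ii), I unpack both sides modulo $\A_H$. Since $S''-S'=\emptyset$, $\w{0}W^*_{K,\emptyset}=\Z$ and $\A_H^0/\A_H\cong\Z$, so inspection of Definition~\ref{5.3} shows that $\cR^\Artin_{L/K}$ is the canonical identification, giving $\cR^\Artin_{L/K}(\epsilon_K)=\epsilon_K$. On the other side, the image of $\Tw_{L/K}(\epsilon_L)=\sum_{h\in H}\epsilon_L^h\otimes h^{-1}$ under the augmentation $\ZH/\A_H\cong\Z$ is $N_H\epsilon_L:=\sum_{h}\epsilon_L^h$. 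So (ii) reduces to the norm-compatibility
$$
N_H\epsilon_L\;=\;\j_{L/K}(\epsilon_K)\qquad\text{in } \lat{r}{}{U_L}\otimes_G\w{r}W^*_{L,S'}.
$$

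I verify this by applying $\cR^\infty_L$ and invoking its injectivity on the $\rho_{L,r}$-torsion (the uniqueness clause of $\St(L/k,S,T,S')$). Both sides are $\rho_{L,r}$-torsion: the left because $\rho_{L,r}\epsilon_L=0$; the right because for $\chi\in\hat G$ factoring through $\Gamma$ as $\bar\chi$ the equality of decomposition groups gives $r_L(\chi)=r_K(\bar\chi)$, so $\iota_{L/K}$ intertwines $\rho_{K,r}$ with $\rho_{L,r}$. For the left side, decomposition of $G$ into $H$-cosets gives $N_Hx=\iota_{L/K}(\pi_{L/K}(x))$ for every $x\in\ZG$, where $\pi_{L/K}:\ZG\onto\ZGamma$ is the natural quotient. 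Combined with $\pi_{L/K}(\stick_{L/k}(s))=\stick_{K/k}(s)$ (from $\pi_{L/K}(\Frob_\p^{L/k})=\Frob_\p^{K/k}$), this yields
$$
\cR^\infty_L(N_H\epsilon_L)=N_H\cdot\lim_{s\to 0}s^{-r}\stick_{L/k}(s)=\iota_{L/K}\bigl(\lim_{s\to 0}s^{-r}\stick_{K/k}(s)\bigr)=\iota_{L/K}(\cR^\infty_K(\epsilon_K)).
$$

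For the right side, I establish the commutativity $\cR^\infty_L\circ\j_{L/K}=\iota_{L/K}\circ\cR^\infty_K$ by direct computation. On a generator $(u_1\wedge\cdots\wedge u_r)\otimes(w_{K,1}^*\wedge\cdots\wedge w_{K,r}^*)$, with $u_i\in U_K$ and $w_{L,j}$ a chosen lift of $w_{K,j}$, the complete splitting of each $v_j\in S'$ in $L$ gives $|u_i|_{w_{L,j}}=|u_i|_{w_{K,j}}$ and trivial decomposition groups; writing $g=h\tilde\gamma\in G$ then produces
$$
\sum_{g\in G}\log|u_i^g|_{w_{L,j}}g^{-1}=N_H\sum_{\gamma\in\Gamma}\log|u_i^\gamma|_{w_{K,j}}\tilde\gamma^{-1}=\iota_{L/K}\Bigl(\sum_\gamma\log|u_i^\gamma|_{w_{K,j}}\gamma^{-1}\Bigr).
$$
Iterating \eqref{nothom} gives $\det(\iota_{L/K}(A))=[L:K]^{r-1}\iota_{L/K}(\det A)$ for any $r\times r$ matrix $A$ over $\R[\Gamma]$, and this factor exactly cancels the normalization $[L:K]^{-(r-1)}$ built into $\j_{L/K}$ (Lemmas~\ref{5.5}--\ref{5.6}). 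Hence $\cR^\infty_L(\j_{L/K}(\epsilon_K))=\iota_{L/K}(\cR^\infty_K(\epsilon_K))$, matching the left side. The main piece of bookkeeping is the last display: the denominator $[L:K]^{-(r-1)}$ inserted in $\j_{L/K}$ for integrality (Lemma~\ref{5.5}) is precisely what reconciles the non-ring-homomorphism $\iota_{L/K}$ with the determinantal regulator.
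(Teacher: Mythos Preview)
Your proof is correct and, in its overall structure (part (i) vacuous; reduce (ii) to the norm relation $N_H\epsilon_L=\j_{L/K}(\epsilon_K)$ after identifying $\cR^\Artin_{L/K}$ with the identity), matches the paper. The difference is in how the norm relation is established. The paper simply invokes \cite[Proposition~6.1]{stark}, which gives $(\bN_H)^r\bu_L=(\w{r}j)(\bu_K)$ directly, and then rewrites $\bN_H\epsilon_L=[L:K]^{1-r}(\bN_H)^r\epsilon_L$ to match the normalization of $\j_{L/K}$. You instead reprove this relation from the defining property: you apply the analytic regulator $\cR^\infty_L$ to both sides, use the compatibility $\pi_{L/K}(\stick_{L/k})=\stick_{K/k}$ and the determinant identity $\det(\iota_{L/K}(A))=[L:K]^{r-1}\iota_{L/K}(\det A)$ coming from \eqref{nothom}, and conclude via the injectivity of $\cR^\infty_L$ on the $\rho_{L,r}$-torsion. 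Your argument is thus more self-contained (it is essentially the proof of \cite[Proposition~6.1]{stark} specialized to this situation), while the paper's is shorter by outsourcing that step. One small point worth making explicit in your write-up: when you check that $\j_{L/K}(\epsilon_K)$ is $\rho_{L,r}$-torsion, you treat characters $\chi$ factoring through $\Gamma$, but you should also note that characters nontrivial on $H$ automatically annihilate the $H$-invariant element $\j_{L/K}(\epsilon_K)$, so their idempotents in $\rho_{L,r}$ cause no trouble.
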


\begin{proof}
Conjecture \ref{conj}(i) is vacuous when $r=0$, since by definition 
$$
\Tw_{L/K}(\epsilon_{L}) \in (\lat{r}{}{U_L}) \otimes_\Gamma \wedge^r W^*_{S'} \otimes_\Z \ZH 
   = (\lat{r}{}{U_L}) \otimes_\Gamma \wedge^r W^*_{S'} \otimes_\Z \A_H^0.
$$
Let $\bN_H = \sum_{h \in H}h$.
In $(\lat{r}{}{U_L}) \otimes_\Gamma \wedge^r W^*_{S'} \otimes \ZH/\A_H$ we have
\begin{equation}
\label{(5.8)}
\Tw_{L/K}(\epsilon_{L}) = \sum_{h \in H} \epsilon_L^{h} \otimes h 
   \equiv \sum_{h \in H} \epsilon_L^{h} \otimes 1
   = (\bN_H\epsilon_L) \otimes 1.
\end{equation}
If $r=0$, then since the image of the Stickelberger element 
$\stick_{L/k}(0)$ under the restriction map $\ZG \onto \ZGamma$ is $\stick_{K/k}(0)$, 
we have
$$
\bN_H\epsilon_L = \bN_H\stick_{L/k}(0) = \iota_{L/K}\stick_{K/k}(0) 
   = \j_{L/K}(\epsilon_K).
$$
By \eqref{(5.8)}, this proves Conjecture \ref{conj}(ii) when $r=0$.

Suppose now that $r > 0$.  Fix generators $\bw_L = \wk{r}{w}$ and $\bw^*_L = \wk{r}{w^*}$ 
of $\wedge^r W_{L,S'}$ and $\wedge^r W^*_{L,S'}$, respectively.   
Let $\bw_K$ and $\bw_K^*$ be the corresponding generators of $\wedge^r W_{K,S'}$ 
and $\wedge^r W^*_{K,S'}$ obtained by restricting the $w_i$ to $K$ 
(note that since $s=0$, we have $S' = S''$).
Choose $\bu_L \in \lat{r}{}{U_L}$ and $\bu_K \in \lat{r}{}{U_K}$ 
such that $\epsilon_L = \bu_L \otimes \bw^*_L$ 
and $\epsilon_K = \bu_K \otimes \bw^*_K$.  
Then \cite[Proposition 6.1]{stark} shows that 
$(\bN_H)^r\bu_L = (\w{r}j)(\bu_K)$, and so we also have
\begin{multline}
\label{(5.9)}
\bN_H\epsilon_L = [L:K]^{1-r}(\bN_H)^r\epsilon_L 
   = ([L:K]^{1-r}(\bN_H)^r\bu_L) \otimes \bw^*_L \\
   = ([L:K]^{1-r}(\w{r}j)(\bu_K)) \otimes \bw^*_L
   = \j_{L/K}(\epsilon_K).
\end{multline}
Since $s=0$, the map \eqref{labelthis} is just the map 
$\lat{r}{}{U_K} \to (\lat{r}{}{U_K}) \otimes_\Z \ZH/\A_H$ that sends 
$\bu$ to $\bu \otimes 1$, so $\cR_{L/K}^\Artin$ is the map 
$$
(\lat{r}{}{U_K}) \otimes_\Gamma \wedge^r W^*_{K,S'} 
   \too (\lat{r}{}{U_K}) \otimes_\Gamma \wedge^r W^*_{K,S'} \otimes \ZH/\A_H
$$ 
that sends $\bu \otimes \bw$ to $\bu \otimes \bw \otimes 1$.  Hence by \eqref{(5.8)} 
and \eqref{(5.9)} we have
$$
\Tw_{L/K}(\epsilon_{L}) \equiv \j_{L/K}(\epsilon_K) \otimes 1 
   = (\j_{L/K}\otimes 1)(\cR_{L/K}^\Artin(\epsilon_K)) 
$$
in $(\lat{r}{}{U_K}) \otimes_\Gamma \wedge^r W^*_{K,S'} \otimes_\Z \ZH/\A_H$, 
which is Conjecture \ref{conj}(ii).
\end{proof}

\begin{prop}
\label{L=K}
If $L = K$, then Conjecture \ref{conj} is true.
\end{prop}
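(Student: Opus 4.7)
The plan is to reduce to two cases based on $s$, with $s=0$ handled by Proposition \ref{5.15} and $s \ge 1$ handled by degeneracy.

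When $L = K$, we have $H = \Gal(L/K) = 1$, so $\Z[H] = \Z$ and the augmentation ideal $\A_H$ is zero. If $s = 0$, then $S'' = S'$ and the statement is exactly Proposition \ref{5.15}, so I would invoke that and move on. If $s \ge 1$, then $\A_H^s = 0$, which makes both sides of Conjecture \ref{conj}(ii) automatic: the target $(\lat{r}{}{U_L}) \otimes_G \wedge^r W^*_{L,S'} \otimes_\Z \A_H^s/\A_H^{s+1}$ is zero, and likewise the target of $\cR^{\Artin}_{L/K}$ from Definition \ref{5.3} vanishes. For part (i), the map $\Tw_{L/K}$ is just $m \mapsto m \otimes 1$ when $H = 1$, so part (i) reduces to the assertion $\epsilon_L = 0$.

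To establish $\epsilon_L = 0$, I would apply Lemma \ref{xtra} to $L/k = K/k$ with the set $S'$. Its two hypotheses are exactly what our setup guarantees when $s \ge 1$: first, $S'' - S' \subset S - S'$ is nonempty (since $s \ge 1$) and consists of places splitting completely in $K/k = L/k$; second, the disjoint union $(S - S'') \sqcup (S'' - S') \subseteq S - S'$ has cardinality at least $1 + s \ge 2$, using $S'' \subsetneq S$. Lemma \ref{xtra} then says $\epsilon_L = 0$ satisfies $\St(L/k,S,T,S')$, and uniqueness (from the discussion following Conjecture $\St$) forces $\epsilon_L = 0$. This proves Conjecture \ref{conj}(i), and Conjecture \ref{conj}(ii) holds trivially in a zero group as noted above.

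The main obstacle is essentially illusory: once one notices that $L = K$ forces $\A_H = 0$, the only nontrivial content is the vanishing of $\epsilon_L$ in the $s \ge 1$ case, which is a direct application of Lemma \ref{xtra}. The mild bookkeeping step is verifying the cardinality bound $|S - S'| \ge 2$, which follows because $S - S''$ and $S'' - S'$ contribute disjointly.
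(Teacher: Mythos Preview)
Your proof is correct and follows essentially the same approach as the paper: reduce to Proposition \ref{5.15} when $S'' = S'$, and when $S'' \ne S'$ observe that $\A_H = 0$ kills both the regulator and the target of part (i), leaving only the vanishing $\epsilon_L = 0$ to check. The paper establishes this vanishing by directly computing $r(\chi,S) \ge |S''| > r$ for all $\chi$, whereas you package the same argument as an application of Lemma \ref{xtra}; these are the same idea.
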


\begin{proof}
If $S'' = S'$, then this follows from Proposition \ref{5.15}.  
If $S'' \ne S'$ then for every character $\chi$ of $\Gamma$, 
we have $r(\chi,S) \ge |S''| > |S'| = r$, so $\rho_{L,r} = 1$ in 
Definition \ref{4.5} and by definition $\epsilon_L = \epsilon_{L,S,T,S'} = 0$.
Further, we have $\A_H = 0$ in this case, so $\cR^\Artin_{K/K} = 0$ and 
Conjecture \ref{conj} holds.
\end{proof}

\section{Order of vanishing}
\label{ov}

Fix a number field $k$, and a set $S'$ of archimedean places of $k$.  
Let $r := |S'|$.  Let $T$ be a finite set of primes of $k$, containing at 
least one prime not dividing $2$, and containing primes of at least two different 
residue characteristics if $S'$ contains no real places.  (This ensures that 
an extension of $k$ in which all places in $S'$ split completely 
has no roots of unity congruent to one modulo all primes in $T$.)

For example (perhaps the most interesting example), 
$k$ could be a totally real field and $S'$ the set of all 
archimedean places, in which case $r = [k:\Q]$.

Fix a finite abelian extension $K$ of $k$ such that all places in $S'$ 
split completely in $K/k$, and all places in $T$ are unramified in $K/k$.  
Fix a finite set $S$ of places of $K$ disjoint from $T$, containing all 
archimedean places, all primes ramifying in $K/k$, and at least one 
place not in $S'$.  Let $\P$ be the set of all primes of $k$ not in $S \cup T$ that 
split completely in $K/k$, 
and let $\N$ be the set of all squarefree products of primes in $\P$.  

For every $\q\in\P$ suppose that $K(\q)$ is a finite abelian extension of $k$ containing $K$, 
such that $K(\q)/K$ is totally tamely ramified above $\q$ and unramified everywhere else, 
and all places above $S'$ split completely in $K(\q)/K$.
(For example, if $K$ contains the Hilbert class field of $k$ then we could take 
$K(\q)$ to be the compositum of $K$ with the ray class field of $k$ modulo $\q$.)
If $\n\in\N$ define $K(\n)$ to be the compositum of the fields $K(\q)$ for $\q$ 
dividing $\n$.  Ramification considerations show that 
all the $K(\q)$ are linearly disjoint over $K$, so 
if we define $H(\n) := \Gal(K(\n)/K)$ then
$$
H(\n) = \prod_{\q\mid\n} H(\q)
$$
and if $\m\mid\n$ we can view $H(\m)$ both as a quotient 
and a subgroup of $H(\n)$.
Let 
$$
\pi_\m : H(\n) \onto H(\m) \hookto H(\n),
$$
denote the projection map.

Let $S(\n) := S \cup \{\q : \q\mid\n\}$ and $S'(\n) := S' \cup \{\q : \q\mid\n\}$.  
Assume for the rest of this section that the generalized Stark conjecture 
$\St(K(\n)/k,S(\n),T,S')$ holds for every $\n\in\N$, with an element 
$$
\epsilon_{\n} := \epsilon_{K(\n),S(\n),T,S'} 
   \in \lat{r}{}{U_{K(\n),S(\n)}} \otimes \wedge^{r}W^*_{K(\n),S'}.
$$

\begin{lem}
\label{esrel}
If $\d\mid\n$ then 
$$
\sum_{\gamma\in H(\n/\d)} \gamma\epsilon_{\n} 
   = \left(\textstyle\prod_{\q\mid(\n/\d)}(1-\Frob_\q^{-1})\right)\j_{K(\n)/K(\d)}(\epsilon_{\d}).
$$
\end{lem}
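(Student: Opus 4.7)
The statement is the standard Rubin--Stark norm (Euler system) relation for the elements $\epsilon_\n$, and the strategy is to apply the complex regulator $\cR^\infty:=\cR^\infty_{K(\n),S(\n),T,S'}$ to both sides and invoke its injectivity on $(\lat{r}{}{U_{K(\n)}})[\rho_{K(\n),r}]\otimes_{\Gal(K(\n)/k)}\wedge^rW^*_{K(\n),S'}$, which is the uniqueness half of Conjecture $\St(K(\n)/k,S(\n),T,S')$ (see \cite[Lemma 2.7]{stark}).  Both sides lie in this subspace, and I will interpret the factor $\prod_{\q\mid\n/\d}(1-\Frob_\q^{-1})$ on the right as an element of $\Z[\Gal(K(\d)/k)]$ (recall that the primes $\q\mid\n/\d$ are unramified in $K(\d)/k$) acting on $\epsilon_\d$ before applying $\j_{K(\n)/K(\d)}$.

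For the left-hand side, write $\bN:=\sum_{\gamma\in H(\n/\d)}\gamma\in\Z[\Gal(K(\n)/k)]$.  A direct calculation shows $\bN\cdot x=\iota_{K(\n)/K(\d)}(\pi(x))$ for every $x\in\Z[\Gal(K(\n)/k)]$, where $\pi:\Z[\Gal(K(\n)/k)]\onto\Z[\Gal(K(\d)/k)]$ is restriction.  Standard Euler-factor bookkeeping (the primes $\q\mid\n/\d$ lie in $S(\n)\setminus S(\d)$ and are unramified in $K(\d)/k$) yields
$$
\pi\bigl(\stick_{K(\n)/k,S(\n),T}(s)\bigr)=\stick_{K(\d)/k,S(\d),T}(s)\cdot\textstyle\prod_{\q\mid\n/\d}(1-\Frob_\q^{-1}\Nm\q^{-s}),
$$
and combining this with the defining property of $\epsilon_\n$ gives
$$
\cR^\infty\Bigl(\textstyle\sum_{\gamma\in H(\n/\d)}\gamma\epsilon_\n\Bigr)=\iota_{K(\n)/K(\d)}\Bigl(\prod_{\q\mid\n/\d}(1-\Frob_\q^{-1})\cdot\lim_{s\to 0}s^{-r}\stick_{K(\d)/k,S(\d),T}(s)\Bigr).
$$

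For the right-hand side, I would establish the compatibility $\cR^\infty\circ\j_{K(\n)/K(\d)}=\iota_{K(\n)/K(\d)}\circ\cR^\infty_{K(\d),S(\d),T,S'}$.  This comes from the explicit determinantal formula for $\cR^\infty$: since every $v\in S'$ splits completely in $K(\n)/K(\d)$, local absolute values at chosen places above $S'$ match under restriction, and summing over $\Gal(K(\n)/k)$ rather than $\Gal(K(\d)/k)$ applies $\iota_{K(\n)/K(\d)}$ to each matrix entry.  The identity $\det(\iota_{L/K}(M))=[L:K]^{r-1}\iota_{L/K}(\det M)$ for $r\times r$ matrices over $\Z[\Gal(K(\d)/k)]$---an immediate consequence of \eqref{nothom}---produces an extra factor of $[L:K]^{r-1}$, precisely cancelled by the normalization $[L:K]^{1-r}$ built into $\j_{L/K}$ by Lemmas \ref{5.5}--\ref{5.6}.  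Applying Conjecture $\St(K(\d)/k)$, one then sees that $\cR^\infty$ of the right-hand side matches the expression displayed above, and uniqueness concludes.

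The main obstacle is the regulator--norm compatibility $\cR^\infty\circ\j=\iota\circ\cR^\infty$: the delicate point is that passing $\iota_{L/K}$ through an $r\times r$ determinant introduces the factor $[L:K]^{r-1}$, matched \emph{exactly} by the normalization in the definition of $\j_{L/K}$.  Everything else is routine Euler-factor bookkeeping at the primes $\q\mid\n/\d$ (unramified in $K(\d)/k$, totally ramified in $K(\n)/K(\d)$).
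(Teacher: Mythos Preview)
Your approach is correct and is precisely what the paper does by citation: the one-line proof in the paper invokes \cite[Proposition~6.1]{stark}, and the argument there proceeds exactly as you describe, via the regulator compatibility $\cR^\infty\circ\j_{K(\n)/K(\d)}=\iota_{K(\n)/K(\d)}\circ\cR^\infty_{K(\d)}$ (with the factor $[K(\n):K(\d)]^{r-1}$ absorbed by the normalization built into $\j$ in Lemma~\ref{5.6}) together with the Euler-factor relation for the Stickelberger functions. The one point you assert but should justify is that the right-hand side is annihilated by $\rho_{K(\n),r}$: since $\j_{K(\n)/K(\d)}(\epsilon_\d)$ is $H(\n/\d)$-invariant, it suffices to check characters $\chi$ trivial on $H(\n/\d)$ with $r(\chi,S(\n))>r$, and for such $\chi$ either some $v\in S(\d)\setminus S'$ has trivial decomposition group under $\chi$ (whence $e_\chi\epsilon_\d=0$ since $r(\chi,S(\d))>r$) or some $\q\mid\n/\d$ satisfies $\chi(\Frob_\q)=1$ (whence the Euler factor $1-\Frob_\q^{-1}$ kills the $\chi$-component)---routine, but not automatic.
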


\begin{proof}
This follows from \cite[Proposition 6.1]{stark} and the definition (Lemma \ref{5.6}) 
of $\j_{K(\n)/K(\d)}$.
\end{proof}

Let $\nu(\n)$ denote the number of prime factors of $\n$.

\begin{lem}
\label{ovlem}
We have
\begin{multline*}
\sum_{\gamma\in H(\n)}
   \gamma\epsilon_{\n} \otimes {\textstyle\prod_{\q\mid\n}(\pi_\q(\gamma)-1)} \\
   = \sum_{\d\mid\n} \sum_{\gamma\in H(\d)}\gamma\,\j_{K(\n)/K(\d)}(\epsilon_{\d}) 
   \otimes\textstyle \gamma \prod_{\q\mid(\n/\d)}(\pi_\d(\Frob_\q) - 1)
\end{multline*}
in $\lat{r}{}{U_{K(\n),S(\n)}} \otimes \wedge^{r}W^*_{K(\n),S'} \otimes \Z[H(\n)]$.
\end{lem}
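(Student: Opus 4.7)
The plan is to expand the product on the left-hand side via the internal direct product decomposition $H(\n) = \prod_{\q\mid\n}H(\q)$, apply the distribution relation of Lemma \ref{esrel} to the inner sums that result, and then transfer the resulting Frobenius factors between tensor slots by a change of summation variables. For $\gamma \in H(\n)$ the multi-binomial expansion
$$
\prod_{\q\mid\n}(\pi_\q(\gamma) - 1) \;=\; \sum_{\d\mid\n}(-1)^{\nu(\n/\d)}\pi_\d(\gamma) \quad\text{in }\Z[H(\n)]
$$
(with $\pi_\d(\gamma) := \prod_{\q\mid\d}\pi_\q(\gamma) \in H(\d) \subset H(\n)$, grouping terms by the divisor $\d$ specifying the primes at which one selects $\pi_\q(\gamma)$ rather than $-1$) rewrites the left-hand side as $\sum_{\d\mid\n}(-1)^{\nu(\n/\d)}\sum_{\gamma\in H(\n)}\gamma\epsilon_\n \otimes \pi_\d(\gamma)$.

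Fix $\d\mid\n$, and factor $\gamma \in H(\n) = H(\d) \times H(\n/\d)$ as $\gamma = \alpha\beta$ with $\alpha\in H(\d)$ and $\beta\in H(\n/\d)$; then $\pi_\d(\gamma) = \alpha$, so the $\gamma$-sum becomes $\sum_\alpha \alpha\bigl(\sum_\beta\beta\epsilon_\n\bigr) \otimes \alpha$. Lemma \ref{esrel} evaluates the inner sum as $\sum_\beta \beta\epsilon_\n = \prod_{\q\mid\n/\d}(1 - \Frob_\q^{-1})\,\j_{K(\n)/K(\d)}(\epsilon_\d)$, and absorbing the sign $(-1)^{\nu(\n/\d)}$ converts this product into $\prod_{\q\mid\n/\d}(\Frob_\q^{-1} - 1)$. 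The contribution of $\d$ to the left-hand side is therefore
$$
\sum_{\alpha\in H(\d)}\alpha\!\prod_{\q\mid\n/\d}(\Frob_\q^{-1} - 1)\,\j_{K(\n)/K(\d)}(\epsilon_\d) \otimes \alpha.
$$
Note that when $\d = 1$ this vanishes, since $\Frob_\q = 1 \in \Gamma$ as $\q$ splits completely in $K/k$; this matches the vanishing of the corresponding $\d = 1$ term on the right-hand side.

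The final step---and the main technical point---transfers each Frobenius factor from the left tensor slot to the right. For a single $\q\mid\n/\d$, the substitution $\alpha \mapsto \alpha\Frob_\q$ in the sum over $H(\d)$ gives $\sum_\alpha \alpha\Frob_\q^{-1}\,\j_{K(\n)/K(\d)}(\epsilon_\d) \otimes \alpha = \sum_\alpha \alpha\,\j_{K(\n)/K(\d)}(\epsilon_\d) \otimes \alpha\Frob_\q$. Applying this substitution once for each monomial appearing in the expansion of $\prod(\Frob_\q^{-1} - 1)$ converts the $\d$-contribution to $\sum_{\alpha\in H(\d)}\alpha\,\j_{K(\n)/K(\d)}(\epsilon_\d) \otimes \alpha\prod_{\q\mid\n/\d}(\Frob_\q - 1)$. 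Since $\Frob_\q$ already lies in $H(\d) \subset H(\n)$, one has $\pi_\d(\Frob_\q) = \Frob_\q$, and this is exactly the $\d$-summand on the right-hand side. The bookkeeping required in this last step---choosing the correct change of variables separately for each monomial in the expansion of the product---is the most delicate part of the argument.
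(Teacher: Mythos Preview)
Your argument is correct and follows essentially the same route as the paper: expand the product as $\sum_{\d\mid\n}(-1)^{\nu(\n/\d)}\pi_\d(\gamma)$, split the $H(\n)$-sum along $H(\d)\times H(\n/\d)$, apply Lemma~\ref{esrel} to the $H(\n/\d)$-part, and then use the change of variables $\alpha\mapsto\alpha\,\Frob_\q$ in $H(\d)$ (valid because $\q\in\P$ splits completely in $K/k$, so $\Frob_\q\in H(\d)$) to move each Frobenius factor across the tensor sign. The only cosmetic difference is that the paper keeps the factor as $\prod(1-\Frob_\q^{-1})$, transfers it to $\prod(1-\pi_\d(\Frob_\q))$, and absorbs the sign $(-1)^{\nu(\n/\d)}$ at the very end, whereas you absorb the sign first.
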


\begin{proof}
Expanding gives
\begin{equation*}
\sum_{\gamma\in H(\n)}
   \gamma\epsilon_{\n} \otimes {\textstyle\prod_{\q\mid\n}(\pi_\q(\gamma)-1)}
      = \sum_{\gamma\in H(\n)}\sum_{\d\mid\n} (-1)^{\nu(\n/\d)} \gamma\epsilon_{\n}
      \otimes \pi_\d(\gamma).
\end{equation*}
For every $\d$ dividing $\n$, using Lemma \ref{esrel} we have
\begin{align*}
\sum_{\gamma\in H(\n)} \gamma\epsilon_{\n} \otimes \pi_\d(\gamma) 
   &= \sum_{\gamma\in H(\d)} \bigl(\gamma\textstyle\sum_{h \in H(\n/\d)} h\epsilon_{\n}\bigr)
      \otimes \gamma \\
   &= \sum_{\gamma\in H(\d)} 
      \gamma{\textstyle\prod_{\q\mid(\n/\d)}(1-\Frob_\q^{-1})\,\j_{K(\n)/K(\d)}(\epsilon_{\d})}
      \otimes \gamma \\
   &= \sum_{\gamma\in H(\d)}\gamma\,\j_{K(\n)/K(\d)}(\epsilon_{\d}) 
   \otimes \gamma\textstyle\prod_{\q\mid(\n/\d)}(1-\pi_\d(\Frob_\q)).
\end{align*}
Combining these identities proves the lemma.
\end{proof}

\begin{thm}
\label{ordvan}
Suppose that the Stark conjecture $\St(K(\n)/k,S(\n),T,S')$ 
holds for every $\n\in\N$.  Then for every $\n\in\N$, we have
$$
\Tw_{K(\n)/K}(\epsilon_{\n}) \in \lat{r}{}{U_{K(\n),S(\n)}} 
   \otimes \wedge^{r}W^*_{K(\n),S'} \otimes \A_{H(\n)}^{\nu(\n)}.
$$
In other words, Conjecture \ref{conj}(i) holds for $(K(\n)/K/k,S(\n),T,S',S'(\n))$.
\end{thm}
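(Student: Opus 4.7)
The plan is to induct on $\nu(\n)$, with Lemma \ref{ovlem} as the main input. The base case $\nu(\n)=0$ forces $\n=1$, in which case $H(1)$ is trivial, $\A_{H(1)}^{0}=\Z$, and the assertion is vacuous.

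For the inductive step I would abbreviate $M := \lat{r}{}{U_{K(\n),S(\n)}} \otimes \wedge^{r} W^{*}_{K(\n),S'}$ and inspect the equality supplied by Lemma \ref{ovlem}. Its left-hand side
$$
L := \sum_{\gamma\in H(\n)}\gamma\epsilon_{\n}\otimes\prod_{\q\mid\n}(\pi_{\q}(\gamma)-1)
$$
visibly lies in $M\otimes\A_{H(\n)}^{\nu(\n)}$, since each of the $\nu(\n)$ factors $\pi_{\q}(\gamma)-1$ belongs to $\A_{H(\n)}$. Hence the right-hand side lies there too, and I will extract the theorem by isolating the $\d=\n$ summand on that right-hand side.

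For $\d=\n$ the empty product is $1$ and $\j_{K(\n)/K(\n)}$ is the identity, so that summand is $\sum_{\gamma\in H(\n)}\gamma\epsilon_{\n}\otimes\gamma$, which I recognize as $(1\otimes\tau)\Tw_{K(\n)/K}(\epsilon_{\n})$, where $\tau(h)=h^{-1}$ is the standard anti-involution of $\Z[H(\n)]$; since $\tau$ preserves each power $\A_{H(\n)}^{k}$, it suffices to show this $\d=\n$ summand lies in $M\otimes\A_{H(\n)}^{\nu(\n)}$. For every proper divisor $\d\mid\n$, the $\d$-summand on the right of Lemma \ref{ovlem} is the product
$$
\Bigl(\prod_{\q\mid(\n/\d)}(\pi_{\d}(\Frob_{\q})-1)\Bigr)\cdot(\j_{K(\n)/K(\d)}\otimes 1)\Bigl(\sum_{\gamma\in H(\d)}\gamma\epsilon_{\d}\otimes\gamma\Bigr),
$$
whose first factor lies in $\A_{H(\d)}^{\nu(\n/\d)}\subset\A_{H(\n)}^{\nu(\n/\d)}$, and whose inner sum is $(1\otimes\tau)\Tw_{K(\d)/K}(\epsilon_{\d})$. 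By the inductive hypothesis applied to $\d$ (valid since $\nu(\d)<\nu(\n)$), this inner sum lies in $M_{\d}\otimes\A_{H(\d)}^{\nu(\d)}$, where $M_{\d}$ is the obvious analog of $M$ for $\d$. Pushing forward via $\j\otimes 1$ and multiplying therefore places the entire $\d$-summand in $M\otimes\A_{H(\n)}^{\nu(\d)+\nu(\n/\d)}=M\otimes\A_{H(\n)}^{\nu(\n)}$.

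Rearranging Lemma \ref{ovlem} then yields $(1\otimes\tau)\Tw_{K(\n)/K}(\epsilon_{\n})\in M\otimes\A_{H(\n)}^{\nu(\n)}$, and applying $\tau$ once more delivers $\Tw_{K(\n)/K}(\epsilon_{\n})\in M\otimes\A_{H(\n)}^{\nu(\n)}$. I expect the main (if modest) obstacle to be the combinatorial bookkeeping needed to verify that the inclusion $\A_{H(\d)}\hookrightarrow\A_{H(\n)}$ respects powers and that the $\j_{K(\n)/K(\d)}$ interact cleanly with the twisted traces $\Tw_{K(\d)/K}$; past that, the argument is a clean reduction to Lemma \ref{ovlem} together with the inductive hypothesis.
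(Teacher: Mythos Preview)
Your argument is correct and follows the same inductive scheme as the paper: both proofs use Lemma \ref{ovlem}, observe that its left-hand side manifestly lies in $M\otimes\A_{H(\n)}^{\nu(\n)}$, and then handle the proper divisors on the right by induction so that the $\d=\n$ term is forced into the desired module. You are in fact slightly more careful than the paper, which identifies the $\d=\n$ summand directly with $\Tw_{K(\n)/K}(\epsilon_\n)$ rather than with its image under $1\otimes\tau$; as you note, this is harmless since the anti-involution preserves each $\A_{H(\n)}^k$.
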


\begin{proof}
The proof, by induction on $\nu(\n)$, is essentially the same as that 
of \cite[Lemma 8.1]{darmon}.
In the equality of Lemma \ref{ovlem}, every term except possibly 
$\Tw_{K(\n)/K}(\epsilon_{\n})$ (the summand on the right with $\d=\n$) 
lies in 
$\lat{r}{}{U_{K(\n),S(\n)}} \otimes\wedge^{r}W^*_{K(\n),S'} \otimes \A_{H(\n)}^{\nu(\n)}$ 
by our induction hypothesis.  Therefore $\Tw_{K(\n)/K}(\epsilon_{\n})$ 
does as well.
\end{proof}

\section{The case $K = k$}
\label{Kk}

In this section we consider the case $K = k$. 
Let $S'$, $S$, $T$, $\N$, $k(\q)$, $k(\n)$, $H(\n)$, $S(\n)$, $S'(\n)$ 
be as in \S\ref{ov}, and recall that $r := |S'|$.  We will show under mild hypotheses 
that Conjecture \ref{conj} holds in this case (with both sides of Conjecture \ref{conj}(ii) 
equal to zero).  This is needed for the proof of Theorem \ref{9.7} below, because 
our general techniques only work for nontrivial characters of $K/k$.

\begin{lem}
\label{lem9.6}
Suppose that $S'$ does not contain all archimedean places of $k$.  Then 
$\epsilon_{k(\n),S(\n),T,S'} = 0$ for every $\n \ne 1$.
\end{lem}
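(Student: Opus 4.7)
The plan is to reduce the statement to Lemma~\ref{xtra}, applied with the roles of $K$ and $S$ played by $k(\n)$ and $S(\n)$ respectively. From the hypothesis that $S'$ does not contain every archimedean place of $k$, I first pick an archimedean place $v_0$ of $k$ with $v_0 \notin S'$. Because the set $S$ chosen in \S\ref{ov} contains all archimedean places of $k$, we have $v_0 \in S \setminus S' \subset S(\n) \setminus S'$.

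Next I check the two hypotheses of Lemma~\ref{xtra}. For the splitting hypothesis I will show that $v_0$ splits completely in $k(\n)/k$: by construction each $k(\q)/k$ is totally tamely ramified above $\q$ and unramified elsewhere, so $k(\n)/k$ is ramified only at the non-archimedean primes $\q \mid \n$. In particular $v_0$ is unramified in $k(\n)/k$, and for an archimedean place in a Galois (in particular abelian) extension, ``unramified'' is equivalent to ``splits completely'', since in both cases the archimedean decomposition group is trivial. For the cardinality hypothesis, write $S(\n) = S \cup \{\q : \q \mid \n\}$; the primes $\q \in \P$ are by definition disjoint from $S$, so $|S(\n) \setminus S'| \ge 1 + \nu(\n) \ge 2$ for any $\n \ne 1$.

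With both hypotheses in hand, Lemma~\ref{xtra} yields $\epsilon_{k(\n), S(\n), T, S'} = 0$, which is exactly what we need. There is no serious obstacle here; the only ingredient beyond Lemma~\ref{xtra} is the standard observation that an archimedean place unramified in a Galois extension splits completely, which is an immediate inspection of the archimedean decomposition group.
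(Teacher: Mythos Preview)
Your proof is correct and follows essentially the same route as the paper's own proof: pick an archimedean place $v_0 \notin S'$, observe that $k(\n)/k$ is unramified outside $\n$ so $v_0$ splits completely, and invoke Lemma~\ref{xtra}. You spell out the cardinality check $|S(\n)-S'|\ge 2$ more explicitly than the paper does, which is fine.
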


\begin{proof}
Let $w$ be an archimedean place not in $S'$.  By definition $k(\n)/k$ is unramified 
outside of $\n$, so $w$ splits completely in $k(\n)/k$.  Hence if $\n\ne 1$ 
then $\epsilon_{k(\n),S(\n),T,S'} = 0$ by Lemma \ref{xtra}.
\end{proof}

\begin{thm}
\label{K=kthm}
Suppose $\n\in\N$ and Conjecture $\St(k(\n)/k)$ holds.
If $|S - S'| \ge 2$, or if $S'$ does not contain all archimedean places of $k$, 
then Conjecture \ref{conj} holds for $(k(\n)/k/k,S(\n),T,S',S'(\n))$.
\end{thm}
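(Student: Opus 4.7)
The plan is to dispatch $\n = 1$ immediately by Proposition \ref{L=K}, then to split on which alternative in the hypothesis holds.

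\textbf{When $|S-S'|\ge 2$:} First apply Lemma \ref{xtra} to the trivial extension $k/k$ with the data $(S(\n),T,S'(\n))$ -- every place splits trivially and $|S(\n)-S'(\n)| = |S-S'|\ge 2$ -- to conclude that $\epsilon_K = 0$, so the right side of Conjecture \ref{conj}(ii) vanishes. What remains is to prove $\Tw_{L/K}(\epsilon_L) \in \A_H^{\nu(\n)+1}$. If $S'$ also misses an archimedean place we are done by Lemma \ref{lem9.6}; otherwise every element of $S-S'$ is finite. Picking such a $v$ and writing $\tilde S := S-\{v\}$, $\tilde\epsilon_L := \epsilon_{k(\n),\tilde S(\n),T,S'}$, the key input is the Euler-factor identity
$$\epsilon_L \;=\; (1-\Frob_v^{-1})\,\tilde\epsilon_L,$$
which follows from $\stick_{k(\n)/k}^{S(\n)}(s) = (1-\Frob_v^{-1}\Nm v^{-s})\,\stick_{k(\n)/k}^{\tilde S(\n)}(s)$ together with the injectivity of $\cR^\infty$ on the appropriate torsion-killed lattice. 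Because $H$ is abelian, the Galois action of $(1-\Frob_v^{-1})$ on the first factor of $\Tw_{L/K}(\tilde\epsilon_L)$ may be transferred to multiplication on the second factor, giving
$$\Tw_{L/K}(\epsilon_L) \;=\; \Tw_{L/K}(\tilde\epsilon_L)\cdot(1-\Frob_v^{-1}).$$
Theorem \ref{ordvan} applied to $\tilde S$ yields $\Tw_{L/K}(\tilde\epsilon_L) \in \A_H^{\nu(\n)}$, and $(1-\Frob_v^{-1}) \in \A_H$, so the product lies in $\A_H^{\nu(\n)+1}$, as required.

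\textbf{When $|S-S'|=1$ and $S'$ misses an archimedean place:} then $S-S'=\{v_0\}$ with $v_0$ archimedean. Lemma \ref{lem9.6} gives $\epsilon_L = 0$, killing the left side of Conjecture \ref{conj}(ii). For the right side, I would invoke global reciprocity: for every $u \in U_k$, $\prod_v [u,k(\n)_v/k_v] = 1$ in $H$, and hence $\sum_v \varphi_v(u) = 0$ in $\A_H/\A_H^2$, where $\varphi_v(u) := [u,k(\n)_v/k_v] - 1$. The summands $\varphi_v$ vanish for $v \in S'$ (split in $k(\n)/k$), for $v = v_0$ (archimedean and unramified in $k(\n)/k$, hence split), and for $v \notin S(\n)$ (units have trivial Artin symbol at unramified primes). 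What remains is $\sum_{\q\mid\n}\varphi_\q = 0$, a nontrivial linear dependence that forces $\varphi_{\q_1}\wedge\cdots\wedge\varphi_{\q_{\nu(\n)}} = 0$ in $\wedge^{\nu(\n)}\Hom_\Z(U_k,\A_H/\A_H^2)$; unwinding Definition \ref{5.3} then gives $\cR_{L/K}^\Artin(\epsilon_K) = 0$.

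\textbf{Main obstacle.} The one nontrivial step is the Euler-factor identity $\epsilon_L = (1-\Frob_v^{-1})\tilde\epsilon_L$. One must verify that both sides live in compatible $\rho_{L,r}$-torsion-killed lattices under the inclusion $U_{k(\n),\tilde S(\n),T} \hookto U_{k(\n),S(\n),T}$; character by character, this works because $r(\chi,S) = r$ combined with $v$ being unramified forces $\chi(\Frob_v)\ne 1$, so $(1-\Frob_v^{-1})$ is invertible on the relevant components. One also needs $|\tilde S - S'|\ge 1$ for $\tilde S$ to satisfy the hypotheses of \S\ref{ov}, which is exactly why the Euler-factor branch requires $|S-S'|\ge 2$. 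Conjecture \ref{conj}(i) follows in both branches: from the strengthened $\A_H^{\nu(\n)+1}$ inclusion in the first, and vacuously from $\epsilon_L = 0$ in the second.
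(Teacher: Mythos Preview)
Your proof is correct and follows essentially the same approach as the paper's: dispatch $\n=1$ via Proposition \ref{L=K}, then show both sides of Conjecture \ref{conj}(ii) vanish for $\n\ne 1$ using the same four ingredients (Lemma \ref{xtra}, Lemma \ref{lem9.6}, the Euler-factor identity combined with Theorem \ref{ordvan}, and global reciprocity), with only a cosmetic reorganization of the case split. The ``main obstacle'' you flag is a non-issue---the paper simply cites \cite[Proposition 3.6]{stark} for the Euler-factor identity, which holds as an equality of lattice elements and needs no invertibility argument.
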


\begin{proof}
Conjecture \ref{conj}(i) holds by Theorem \ref{ordvan}, 
and Conjecture \ref{conj}(ii) holds when $\n = 1$ by Proposition \ref{L=K}.
To prove the theorem we will show that for every $\n \ne 1$,
\begin{align}
\label{(7.3)}
\Tw_{k(\n)/k}(\epsilon_{k(\n),S(\n),T,S'}) 
   &\in \lat{r}{}{U_{k(\n),S_0(\n)}} \otimes \wedge^r W_{k(\n),S'}^* 
   \otimes \A_{H(\n)}^{\nu(\n)+1}, \\
\label{(7.4)}
\cR^\Artin_{k(\n)/k}(\epsilon_{k,S(\n),T,S'(\n)})
   &\in \lat{r}{}{U_{k(\n),S_0(\n)}} \otimes \wedge^r W_{k(\n),S'}^* 
   \otimes \A_{H(\n)}^{\nu(\n)+1}.
\end{align}

Suppose first that $|S-S'| \ge 2$.  Then $\epsilon_{k,S(\n),T,S'(\n)} = 0$ 
by Lemma \ref{xtra}, so \eqref{(7.4)} holds.
If $k$ has an archimedean place not in $S'$, then 
$\epsilon_{k(\n),S(\n),T,S'(\n)} = 0$ for $\n\ne 1$ by Lemma \ref{lem9.6}, 
so \eqref{(7.3)} holds.
If not, then $S$ contains two nonarchimedean primes; call one of them $v$ and 
let $S_0 := S - \{v\}$.  Since $v$ does not divide $\n$ and $S_0$ is still 
strictly larger than $S'$, all the 
hypotheses of Conjecture $\St(k(\n)/k,S_0(\n),T,S')$ are satisfied, so by 
Theorem \ref{ordvan} we have 
\begin{equation}
\label{(9.9)}
\Tw_{k(\n)/k}(\epsilon_{k(\n),S_0(\n),T,S'}) 
   \in \lat{r}{}{U_{k(\n),S_0(\n)}} \otimes \wedge^r W_{k(\n),S'}^* \otimes \A_{H(\n)}^{\nu(\n)}. 
\end{equation}
It follows directly from the defining properties (see for example
\cite[Proposition 3.6]{stark}) that 
$
\epsilon_{k(\n),S(\n),T,S'} = (1-\Frob_v^{-1})\epsilon_{k(\n),S_0(\n),T,S'}, 
$
so using \eqref{(9.9)}
\begin{multline*}
\Tw_{k(\n)/k}(\epsilon_{k(\n),S(\n),T,S'}) 
   = \Tw_{k(\n)/k}(\epsilon_{k(\n),S_0(\n),T,S'}) (1-\Frob_v^{-1}) \\
   \in \lat{r}{}{U_{k(\n),S_0(\n)}} \otimes \wedge^r W_{k(\n),S'}^* \otimes \A_{H(\n)}^{\nu(\n)+1}.
\end{multline*}
This is \eqref{(7.3)}.

Now suppose that $S'$ does not contain all archimedean places of $k$.
By Lemma \ref{lem9.6} we have $\epsilon_{k(\n),S(\n),T,S'} = 0$ for every $\n \ne 1$, 
so \eqref{(7.3)} holds.
If $S$ contains a nonarchimedean place then $|S-S'| \ge 2$, and we are in the 
case treated above.  So we may assume that $S$ is the set of all archimedean places.
Let $S' = \{v_1,\ldots,v_r\}$ and $\n = \prod_{i=1}^s \q_i$.  For $1 \le i \le s$ define
$\eta_i : U_{k,S(\n)} \to \A_{H(\n)}/\A_{H(\n)}^2$ to be the map given by the local Artin symbol
$$
\eta_i(u) := [u,k(\n)_{\q_i}/k_{\q_i}] - 1
$$
where $k(\n)_{\q_i}$ is the completion of $k(\n)$ at a prime above $\q_i$.  
Fix an expression 
$$
\epsilon_{k,S(\n),T,S'(\n)} = (u_1 \wedge \cdots \wedge u_{r+s}) 
   \otimes (v_1^* \wedge \cdots \wedge v_r^*  
   \wedge \q_1^* \wedge \cdots \wedge \q_s^*)
$$
with $u_i \in U_{k,S(\n)}$ (we have $\lat{r+s}{}{U_{k,S(\n)}} = \wedge^{r+s}U_{k,S(\n)}$ 
since $\Z[\Gamma] = \Z$).
Then concretely (ignoring the sign, which will not be important)
\begin{equation}
\label{(7.5)}
\cR^\Artin_{k(\n)/k}(\epsilon_{k,S(\n),T,S'(\n)}) 
   = \pm(\eta_1 \wedge \cdots \wedge \eta_s) (u_1 \wedge \cdots \wedge u_{r+s})
   \otimes (v_1^* \wedge \cdots \wedge v_r^*).
\end{equation}
In $\A_{H(\n)}/\A_{H(\n)}^2$, using the reciprocity law of global class field theory, we have 
for every $u \in U_{k,S(\n)}$
$$
\sum_{i=1}^s \eta_i(u) = \biggl(\prod_{\q\mid\n}[u,k(\n)_{\q}/k_{\q}]\biggr) - 1 
   = \prod_{w \nmid \n} [u,k(\n)_w/k_w]^{-1} - 1.
$$
If $w$ is nonarchimedean and does not divide $\n$, then $u$ is a unit at $w$ and 
$w$ is unramified in $k(\n)/k$, so $[u,k(\n)_w/k_w] = 1$.  If $w$ is archimedean, 
then $w$ splits completely in $k(\n)/k$, so again $[u,k(\n)_w/k_w] = 1$.  
Thus $\sum_{i=1}^s \eta_i : U_{k,S(\n)} \to \A_{H(\n)}/\A_{H(\n)}^2$ is the zero map, and we conclude 
using \eqref{(7.5)} that
\begin{multline*}
\cR^\Artin_{k(\n)/k}(\epsilon_{k,S(\n),T,S'(\n)}) 
   = \pm(\eta_1 \wedge \cdots \wedge \eta_s)(u_1 \wedge \cdots \wedge u_{r+s})
      \otimes (v_1^* \wedge \cdots \wedge v_r^*) \\
\textstyle
   = \pm(\eta_1 \wedge \cdots \wedge \eta_{s-1} \wedge (\sum_i\eta_i))(u_1 \wedge \cdots \wedge u_{r+s})
      \otimes (v_1^* \wedge \cdots \wedge v_r^*)
   = 0.
\end{multline*}
Thus \eqref{(7.4)} holds in this case as well, and the theorem follows.
\end{proof}

\section{Connection with Euler systems}
\label{essect}

Let $K/k$, $S'$, $S$, $T$, $\P$, $\N$, $K(\q)$, $K(\n)$, $S(\n)$, $S'(\n)$ 
be as in \S\ref{ov}, and let $\Gamma = \Gal(K/k)$.  Recall that $r := |S'|$.  

We assume further (by shrinking $K(\q)$ if necessary) 
that $[K(\q):K]$ is prime to $[K:k]$ for every $\q \in \P$.  It follows that for every $\q$ 
there is a unique extension $k(\q)/k$, totally ramified at $\q$ and unramified elsewhere, 
such that $K(\q) = Kk(\q)$.  Then if $k(\n)$ denotes the compositum 
of the $k(\q)$ for $\q$ dividing $\n$, we have $K(\n) = Kk(\n)$ for every $\n\in\N$, 
and 
\begin{equation}
\label{six.three}
\Gal(K(\n)/k) \cong \Gamma \times H(\n).
\end{equation}

Since all archimedean places split completely in $k(\q)/k$ for every $\q$, 
every $v \in S'$ splits completely in $K(\n)/k$ for every $\n$.  
Hence all hypotheses of Conjecture $\St(K(\n)/k,S(\n),T,S')$ are satisfied.

Fix an ordering $v_1,\ldots, v_r$ of the places in $S'$, 
and for each $i$ choose a place $w_i$ of 
the algebraic closure $\bar{k}$ above $v_i$.  Then for every $\n$, the element 
$$
\bw^*_\n := (w_1|_{K(\n)})^* \wedge \cdots \wedge (w_r|_{K(\n)})^*
$$
is a generator of the free, rank-one $\Z[\Gal(K(\n)/k)]$-module $\wedge^r W_{K(\n),S'}^*$.  
When $\n = 1$ we will write $\bw^*_K$ instead of $\bw^*_1$.

\begin{defn}
\label{xindef}
As in \S\ref{ov}, for every $\n\in\N$ we define
$$
\epsilon_{\n} := \epsilon_{K(\n),S(\n),T,S'} \in (\lat{r}{}{U_{K(\n),S(\n)}}) \otimes \wedge^r W_{K(\n),S'}^*
$$
to be the element predicted by Conjecture $\St(K(\n)/k,S(\n),T,S')$,
and we define 
$$
\xi_{\n} \in \lat{r}{}{U_{K(\n),S(\n)}} \subset \wedge^r U_{K(\n),S(\n)} \otimes \Q
$$
to be the unique element satisfying
$$
\xi_{\n} \otimes \bw^*_\n = \epsilon_{\n}.
$$
\end{defn}

\begin{prop}
\label{6.2}
If $\m, \n \in \N$, and $\m\mid\n$, then
$$
\bN_{K(\n)/K(\m)}^r \xi_{\n} = 
\prod_{\q\mid (\n/\m)} (1 - \Frob_\q^{-1}) \xi_{\m}.
$$
\end{prop}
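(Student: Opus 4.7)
The plan is to derive Proposition \ref{6.2} directly from Lemma \ref{esrel} by unpacking definitions. Specializing Lemma \ref{esrel} to $\d = \m$ gives
\[
\bN_{K(\n)/K(\m)}\,\epsilon_{\n} = \prod_{\q\mid(\n/\m)}(1-\Frob_\q^{-1})\,\j_{K(\n)/K(\m)}(\epsilon_{\m})
\]
in $(\lat{r}{}{U_{K(\n),S(\n)}})\otimes_{\Z[\Gal(K(\n)/k)]}\wedge^{r}W^{*}_{K(\n),S'}$, and the remainder is to substitute the defining relations $\epsilon_{\n} = \xi_{\n}\otimes \bw^{*}_{\n}$ and $\epsilon_{\m} = \xi_{\m}\otimes \bw^{*}_{\m}$ and peel off $\bw^{*}_{\n}$, which is a free $\Z[\Gal(K(\n)/k)]$-generator of $\wedge^{r}W^{*}_{K(\n),S'}$.

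On the left I would use that $\bw^{*}_{\n}$ is a free generator, so
$\bN_{K(\n)/K(\m)}\,\epsilon_{\n} = (\bN_{K(\n)/K(\m)}\,\xi_{\n})\otimes \bw^{*}_{\n}$. On the right I would unwind the definition of $\j_{K(\n)/K(\m)}$ from Lemma \ref{5.6}: its action on the $U$-factor is $[K(\n):K(\m)]^{-\max\{0,r-1\}}\cdot \w{r}j_{K(\n)/K(\m)}$, while on the $W^{*}$-factor it uses (the dual of) the isomorphism \eqref{sss'}. The key observation is that under the chain of canonical identifications in the proof of Lemma \ref{5.6}, the generator $\bw^{*}_{\m}$ corresponds to $\bw^{*}_{\n}$, because both are built from the same ordered choice of places $w_{1},\dots,w_{r}$ restricted to $K(\m)$ and $K(\n)$, respectively. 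Hence
\[
\j_{K(\n)/K(\m)}(\xi_{\m}\otimes \bw^{*}_{\m})
  = [K(\n):K(\m)]^{-\max\{0,r-1\}}\,(\w{r}j_{K(\n)/K(\m)})(\xi_{\m}) \otimes \bw^{*}_{\n}.
\]

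Cancelling the common factor $\bw^{*}_{\n}$ from both sides leaves
\[
\bN_{K(\n)/K(\m)}\,\xi_{\n}
   = \prod_{\q\mid(\n/\m)}(1-\Frob_\q^{-1})\,[K(\n):K(\m)]^{-\max\{0,r-1\}}\,(\w{r}j_{K(\n)/K(\m)})(\xi_{\m}).
\]
For $r\ge 1$ the group-ring identity $\bN_{K(\n)/K(\m)}^{\,r} = |H(\n/\m)|^{r-1}\,\bN_{K(\n)/K(\m)} = [K(\n):K(\m)]^{r-1}\,\bN_{K(\n)/K(\m)}$ exactly absorbs the denominator, and after identifying $\xi_{\m}$ with its image $(\w{r}j_{K(\n)/K(\m)})(\xi_{\m})\in \w{r}U_{K(\n),S(\n)}\otimes\Q$ (as is standard in Euler-system formulations), we recover the claimed formula.

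The only step that requires a bit of attention is verifying that $\bw^{*}_{\m}$ really is mapped to $\bw^{*}_{\n}$ under the chain of equalities that define $\j_{K(\n)/K(\m)}$; once that compatibility is in hand, the rest is bookkeeping combining Lemma \ref{esrel} with the elementary identity $\bN^{r} = |H|^{r-1}\bN$ in the group ring.
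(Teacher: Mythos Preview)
Your argument is correct for $r\ge 1$, and the underlying mathematics is the same as the paper's. The paper's proof is a single citation: ``This is \cite[Proposition~6.1]{stark}.'' Your route instead invokes Lemma~\ref{esrel}, which the paper proves by citing that same external result together with the definition of $\j_{K(\n)/K(\d)}$; you are simply reversing that packaging to recover the $\xi$-formulation. The computation $\j_{K(\n)/K(\m)}(\xi_\m\otimes\bw^*_\m)=[K(\n):K(\m)]^{1-r}(\w{r}j)(\xi_\m)\otimes\bw^*_\n$ is exactly what appears (in the $s=0$ case) in the proof of Proposition~\ref{5.15}, so the compatibility of generators you flag as ``requiring attention'' is already verified there. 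One small remark: your identity $\bN^r=[K(\n):K(\m)]^{r-1}\bN$ and the factor $[K(\n):K(\m)]^{-\max\{0,r-1\}}$ only match up for $r\ge 1$; for $r=0$ one has $\j=\iota_{L/K}$ and the statement reduces to the standard norm relation for Stickelberger elements, but your write-up does not cover that case explicitly.
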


\begin{proof}
This is \cite[Proposition 6.1]{stark}.
\end{proof}

By \eqref{six.three}, for every $\n\in\N$ we can view 
any $\Gal(K(\n)/k)$-module as a $\Gamma$-module.

Fix a rational prime $p$, not lying below any prime in $T$, and not 
dividing $[K:k]$.  
Fix also a character $\chi : \Gamma \to \bar{\Q}_p^\times$.
Let $\D := \Zp[\chi]$, the extension of $\Zp$ generated by the values of $\chi$.  
Since $p \nmid [K:k]$, the order of $\chi$ is prime to $p$ so $\D$ is unramified over $\Zp$. 
If $M$ is a $\Z[\Gamma]$-module, we let $M^\chi$ be the submodule of $M \otimes_{\Z} \D$ 
on which $\Gamma$ acts via $\chi$.  If $m \in M$, then 
\begin{equation}
\label{(6.2)}
m^\chi := \frac{1}{[K:k]}\sum_{\gamma\in\Gamma} m^\gamma \otimes \chi^{-1}(\gamma)
   \in M^\chi
\end{equation}
is the projection of $m$ into $M^\chi$.

Let $\Mchi := \Zp(1) \otimes \chi^{-1}$ denote a free $\D$-module of rank one on 
which $G_k$ acts via $\chi^{-1}$ times the cyclotomic character.

\begin{prop}
\label{h1u}
For every $\n \in \N$, Kummer theory gives Galois-equivariant isomorphisms 
$$
(K(\n)^\times)^{\chi} \cong H^1(k(\n),\Mchi),
$$
and if $\q$ is a prime of $k$
$$
((K \otimes_{k} k_\q)^\times)^\chi \cong H^1(k_\q,\Mchi).
$$
\end{prop}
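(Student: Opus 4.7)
The plan is to combine classical Kummer theory with inflation-restriction, using crucially the hypothesis $p \nmid [K:k]$.

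For the global isomorphism, the direct product decomposition \eqref{six.three} identifies $\Gal(K(\n)/k(\n))$ with $\Gamma$. Since $|\Gamma|$ is a unit in $\D$ and $\Mchi$ is a $\D$-module, the inflation-restriction sequence for $G_{k(\n)} \supset G_{K(\n)}$ degenerates to yield
$$
H^1(k(\n), \Mchi) \;\cong\; H^1(K(\n), \Mchi)^{\Gamma}.
$$
Over $K(\n)$, the character $\chi^{-1}$ is trivial (it factors through $\Gamma = \Gal(K/k)$ and $K \subset K(\n)$), so $\Mchi$ restricts to $\Zp(1) \otimes_\Zp \D$ as a $G_{K(\n)}$-module, with the twist only affecting the $\Gamma$-action on the coefficient factor. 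Kummer theory then identifies $H^1(K(\n), \Zp(1))$ with the pro-$p$ completion of $K(\n)^\times$, so $H^1(K(\n), \Mchi) \cong K(\n)^\times \otimes_\Z \D \otimes \chi^{-1}$ (suitably completed), with $\Gamma$ acting diagonally. Taking $\Gamma$-invariants via the idempotent $\tfrac{1}{|\Gamma|}\sum_{\gamma\in\Gamma} \gamma$ reproduces exactly the $\chi$-projection $m \mapsto m^\chi$ of Definition \eqref{(6.2)}, yielding the first isomorphism.

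The local case is proved in parallel. Fix a prime $w$ of $K$ above $\q$, with decomposition group $\Gamma_w \subset \Gamma$. Inflation-restriction for $G_{k_\q} \supset G_{K_w}$ (with $|\Gamma_w|$ dividing $|\Gamma|$ and hence prime to $p$) gives
$$
H^1(k_\q, \Mchi) \;\cong\; H^1(K_w, \Mchi)^{\Gamma_w},
$$
which Kummer theory identifies with the $\chi|_{\Gamma_w}$-eigenspace of $K_w^\times \otimes \D$. On the other hand, the decomposition $(K \otimes_k k_\q)^\times = \prod_{w|\q} K_w^\times$ exhibits the source of the desired isomorphism as the induced $\Gamma$-module $\mathrm{Ind}_{\Gamma_w}^\Gamma K_w^\times$, whose $\chi$-eigenspace under $\Gamma$ is canonically the $\chi|_{\Gamma_w}$-eigenspace of $K_w^\times$, matching the cohomology side.

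The argument is essentially formal; the only point requiring care is that Kummer theory gives $H^1(F, \Zp(1))$ as the pro-$p$ completion of $F^\times$ rather than $F^\times$ itself, so $(K(\n)^\times)^\chi$ must be understood inside the appropriate $p$-completed module. Since $|\Gamma|$ is a unit in $\D$, the $\chi$-projection commutes with $p$-adic completion, so no essential difficulty arises.
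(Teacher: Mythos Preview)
Your argument is correct and is precisely the ``standard calculation'' the paper has in mind; the paper gives no proof of its own but simply refers to \cite[\S6.1]{kolysys} and \cite[\S1.6.C]{eulersys}, where the same inflation-restriction and Kummer theory steps are carried out. Your closing remark about the implicit $p$-adic completion is apt: the paper's notation $(K(\n)^\times)^\chi$ should indeed be read inside the $p$-completed module, and this is consistent with how the isomorphism is used later (e.g.\ in Lemma~\ref{8.1}).
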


\begin{proof}
This is a standard calculation; 
see for example \cite[\S6.1]{kolysys} or \cite[\S1.6.C]{eulersys}.
\end{proof}

\begin{thm}
\label{6.5}
Suppose that $r = 1$, and 
Conjecture $\St(K(\n)/k,S(\n),T,S')$ holds for every $\n \in \N$. 
Let $c_{\n} \in H^1(k(\n),\Mchi)$ 
denote the image of $\xi_{\n}^\chi$ under the Kummer map 
of Proposition \ref{h1u}. 
Then the collection 
$$
\{c_{\n} : \n\in\N\}
$$
is an Euler system for the $G_k$-representation $\Mchi$ 
in the sense of \cite[Definition 3.2.2]{kolysys} or \cite[\S9.1]{eulersys}.
\end{thm}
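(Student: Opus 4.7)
The plan is to verify the axioms of an Euler system from \cite[Definition 3.2.2]{kolysys} for the collection $\{c_{\n}\}_{\n\in\N}$. Concretely, one must check (a) that each $c_{\n}$ lies in the appropriate Selmer-type subgroup of $H^1(k(\n),\Mchi)$ (in particular that $c_{\n}$ is unramified at every prime not in $S(\n)$), and (b) the norm-compatibility
$$
\mathrm{cor}_{k(\n)/k(\m)}(c_{\n}) = \prod_{\q\mid(\n/\m)} P_\q(\Frob_\q^{-1})\, c_{\m}
$$
for $\m\mid\n$ in $\N$, where $P_\q(x):=\det(1-x\Frob_\q^{-1}\mid \Mchi^*(1))$ is the Euler factor at $\q$.

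I would begin by making explicit the norm--corestriction dictionary underlying Proposition~\ref{h1u}. Since $p\nmid[K:k]$, the order of $\Gamma$ is a unit in $\D$, so the $\chi$-projection \eqref{(6.2)} is integral over $\D$, and inflation--restriction (combined with the vanishing of $H^i(\Gamma,-)$ on $p$-torsion coefficients for $i\geq 1$) gives an isomorphism $H^1(k(\n),\Mchi)\cong H^1(K(\n),\Mchi)^\Gamma$. Because $\chi^{-1}$ is trivial as a $G_{K(\n)}$-module, the latter is canonically isomorphic, via the usual Kummer identification $H^1(K(\n),\Zp(1))\cong K(\n)^\times\otimes\Zp$, to $(K(\n)^\times)^\chi$; this is the isomorphism of Proposition~\ref{h1u}. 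Both steps are functorial with respect to (co)restriction, so under this identification the norm $\bN_{K(\n)/K(\m)}$ on $(K(\n)^\times)^\chi$ corresponds to $\mathrm{cor}_{k(\n)/k(\m)}$ on $H^1(k(\n),\Mchi)$.

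With this dictionary in place, I would apply Proposition~\ref{6.2} to $\xi_{\n}$ with $r=1$ and project onto $\chi$-components. Because $\q$ is unramified in $K/k$ and $\Frob_\q\in\Gamma$ acts on the $\chi$-component as multiplication by $\chi(\Frob_\q)$, Proposition~\ref{6.2} becomes
$$
\bN_{K(\n)/K(\m)}(\xi_{\n}^\chi) = \prod_{\q\mid(\n/\m)}(1-\chi(\Frob_\q)^{-1})\,\xi_{\m}^\chi.
$$
For $\Mchi = \Zp(1)\otimes\chi^{-1}$ one has $\Mchi^*(1)$ is the free rank-one $\D$-module on which $G_k$ acts by $\chi$, so $\Frob_\q^{-1}$ acts on it as $\chi(\Frob_\q)^{-1}$ and $P_\q(\Frob_\q^{-1}) = 1-\chi(\Frob_\q)^{-1}$. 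Translating the displayed identity across the Kummer isomorphism from the previous paragraph yields (b).

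Axiom (a) is read off directly from the definition of $\xi_\n$: membership in $U_{K(\n),S(\n),T}$ means that $\xi_\n$ is a unit at every prime of $K(\n)$ outside $S(\n)\cup T$ and is congruent to $1$ modulo all primes of $T$, so its Kummer image $c_\n$ is unramified outside $S(\n)$ and lies in the prescribed local subgroup at primes of $T$. The main technical obstacle will be the first step above, namely the careful naturality check that the isomorphism of Proposition~\ref{h1u} intertwines norm with corestriction; once this book-keeping is established, the theorem is essentially a direct transcription of Proposition~\ref{6.2} in the $r=1$ case.
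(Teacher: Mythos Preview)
Your overall strategy---transport Proposition~\ref{6.2} through the Kummer isomorphism of Proposition~\ref{h1u}---is exactly what the paper does, and the bookkeeping on corestriction versus norm and on the ramification axiom is fine. However, there is a genuine error in how you handle the Frobenius. In Proposition~\ref{6.2} the element $\Frob_\q$ lies in $\Gal(K(\m)/k)\cong\Gamma\times H(\m)$, not merely in $\Gamma$; passing to the $\chi$-component only converts the $\Gamma$-action into the scalar $\chi$, while the $H(\m)$-action survives. Thus after $\chi$-projection the norm relation still reads
\[
\bN_{K(\n)/K(\m)}\xi_\n^\chi \;=\; \prod_{\q\mid(\n/\m)}(1-\Frob_\q^{-1})\,\xi_\m^\chi
\]
with $\Frob_\q^{-1}$ acting as an element of $H(\m)=\Gal(k(\m)/k)$, \emph{not} as the scalar $\chi(\Frob_\q)^{-1}$. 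Since every $\q\in\P$ splits completely in $K/k$, in fact $\chi(\Frob_\q)=1$, so your displayed identity would collapse to $0=0$: true, but not the Euler-system relation you need.

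The same confusion recurs in your Euler factor. You correctly compute $P_\q(x)=1-\chi(\Frob_\q)^{-1}x$, but in the Euler-system axiom $P_\q(\Frob_\q^{-1})$ means substituting the \emph{Galois automorphism} $\Frob_\q^{-1}\in\Gal(k(\m)/k)$ for $x$, giving the group-ring element $1-\chi(\Frob_\q)^{-1}\Frob_\q^{-1}$, not a scalar. The match with the correct norm relation above is then exactly the equality $\chi(\Frob_\q)=1$; this is the point the paper records in the parenthetical remark at the end of its proof (``$\chi(\q)=1$ if $\q\mid\n$''), and it is the one ingredient your argument is missing.
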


\begin{proof}
It follows from Proposition \ref{6.2} and \eqref{(6.2)} that 
if $\m, \n \in \N$ and $\m \mid \n$, then
$$
\bN_{K(\n)/K(\m)} \xi_{\n}^\chi = \prod_{\q\mid(\n/\m)} (1-\Frob_\q^{-1})\xi_{\m}^\chi.
$$
Translated to the elements $c_{\n}$ and $c_{\m}$, this is 
the defining property of an Euler system for $\Mchi$.
(Note that by the definition of $\N$ in \S\ref{ov}, we have $\chi(\q) = 1$ if $\q\mid\n$.)
\end{proof}

\begin{rem}
For general $r \ge 1$, the collection $\{c_{\n} : \n\in\N\}$ is not necessarily 
an Euler system in the sense of \cite[Definition 1.2.2]{pr}, because the elements 
$c_\n$ lie in $\lat{r}{}{H^1(k(\n),\Mchi)}$ rather than 
$\wedge^r H^1(k(\n),\Mchi)$.  This suggests that one might want to 
relax the definition of Euler system to allow elements to lie in the larger lattice.
\end{rem}

\section{Connection with Stark systems}
\label{kssect}

Let $K(\n)/K/k$, $\Gamma$, $S'$, $r$, $S$, $T$, $\P$, $\N$, $S(\n)$, $S'(\n)$, $\chi$ 
and $\Mchi$ be as in \S\ref{ov} and \S\ref{essect}. 
For $\n\in\N$ let $\nu(\n)$ denote the number of primes dividing $\n$.
We continue to suppose that $[K(\q):K]$ is prime to $[K:k]$ for every $\q\in\P$, 
and we now suppose in addition that 
\begin{equation}
\label{phyp}
p \nmid [K:k]\,\prod_{\lambda \in T_K}(\bN \lambda-1)
\end{equation}

Let $A$ denote the ring of integers of $K$, and for every $\n\in\N$ let $A_{S(\n)}$ 
denote the $S(\n)$-integers of $K$
$$
A_{S(\n)} := \{x \in K : \text{$\ord_\lambda(x) \ge 0$ for every $\lambda \notin S(\n)_K$}\}.
$$ 
Then $U_{K,S(\n)} = \{u \in A_{S(\n)}^\times : \text{$u \equiv 1 \pmod{\lambda}$ for every $\lambda \in T_K$}\}$.

\begin{lem}
\label{nope}
For every $\n\in\N$ we have $p \nmid [A_{S(\n)}^\times:U_{K,S(\n)}]$
\end{lem}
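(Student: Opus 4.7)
The plan is to embed the quotient $A_{S(\n)}^\times / U_{K,S(\n)}$ into a finite abelian group whose order divides $\prod_{\lambda \in T_K}(\bN\lambda-1)$, and then invoke the hypothesis \eqref{phyp} to conclude that the index is prime to $p$.

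More precisely, first I would observe that since $T$ is disjoint from $S(\n)$, every prime $\lambda \in T_K$ lies outside $S(\n)_K$, so every $u \in A_{S(\n)}^\times$ satisfies $\ord_\lambda(u) = 0$ and therefore has a well-defined image in the residue field $(A/\lambda)^\times$. Taking the product of these local reductions, I obtain a natural group homomorphism
$$
A_{S(\n)}^\times \too \prod_{\lambda \in T_K} (A/\lambda)^\times,
$$
whose kernel is, by the very definition of $U_{K,S(\n)}$, exactly $U_{K,S(\n)}$. Hence $A_{S(\n)}^\times/U_{K,S(\n)}$ injects into this product, which is a finite group of order $\prod_{\lambda \in T_K}(\bN\lambda - 1)$.

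Finally, I would apply the standing hypothesis \eqref{phyp}, which asserts precisely that $p \nmid \prod_{\lambda \in T_K}(\bN\lambda-1)$. Since the index $[A_{S(\n)}^\times : U_{K,S(\n)}]$ divides the order of the target, it is coprime to $p$. There is no real obstacle here; the only point requiring minor care is verifying that $T_K \cap S(\n)_K = \emptyset$ so that the reduction map is defined on all of $A_{S(\n)}^\times$, and this is immediate from the setup of $\N$ and $T$.
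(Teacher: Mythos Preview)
Your proposal is correct and follows exactly the same argument as the paper: the paper's proof simply states that reduction gives an injection $A_{S(\n)}^\times/U_{K,S(\n)} \hookrightarrow \bigoplus_{\lambda\in T_K}(A/\lambda)^\times$ and invokes \eqref{phyp}. You have merely spelled out the details of why this reduction map is well-defined and has kernel $U_{K,S(\n)}$.
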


\begin{proof}
Reduction gives an injection 
$A_{S(\n)}^\times/U_{K,S(\n)} \hookto \oplus_{\lambda\in T_K} (A/\lambda)^\times$, 
so the lemma follows from our assumption \eqref{phyp}.
\end{proof}

\begin{lem}
\label{7.2}
For every $\n\in\N$ we have 
$(\lat{r+\nu(\n)}{}{U_{K,S(\n)}})^\chi = \wedge^{r+\nu(\n)}U_{K,S(\n)}^\chi$.
\end{lem}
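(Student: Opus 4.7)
The plan is to pass to the $\chi$-isotypic component, where everything becomes a module over the discrete valuation ring $\D=\Zp[\chi]$, and to observe that over such a DVR the Rubin--Stark lattice and the naïve exterior power agree.

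By hypothesis \eqref{phyp} we have $p \nmid [K:k] = |\Gamma|$, and since $\D$ contains all values of $\chi$, the idempotent
$$
e_\chi = |\Gamma|^{-1}\sum_{\gamma \in \Gamma}\chi(\gamma)\gamma^{-1}
$$
belongs to $\D[\Gamma]$. Consequently the $\chi$-projection $M \mapsto M^\chi$ is an exact functor from $\Z[\Gamma]$-modules to $\D$-modules, compatible with the formation of $\Hom$'s, tensor products, and exterior powers. Since the standing hypothesis on $T$ from \S\ref{units} makes $U_{K,S(\n)}$ a free $\Z$-module, $U_{K,S(\n)} \otimes_\Z \D$ is $\D$-free, and its direct summand $U_{K,S(\n)}^\chi$ is a finitely generated torsion-free module over the DVR $\D$, hence itself free of finite rank.

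Next I would combine two ingredients. First, the lattice construction $\lat{r}{}{-}$ of Definition \ref{mylat} commutes with $\chi$-projection, so that
$$
(\lat{r+\nu(\n)}{}{U_{K,S(\n)}})^\chi = \lat{r+\nu(\n)}{}{U_{K,S(\n)}^\chi},
$$
where the right-hand lattice is now formed over $\D$. This should be a formal consequence of the fact that, once $e_\chi \in \D[\Gamma]$, both $\wedge^{r+\nu(\n)}(-)$ and $\Hom_{\Z[\Gamma]}(-,\Z[\Gamma])$ commute with the $\chi$-projection. Second, for any finitely generated free module $N$ over the DVR $\D$, the inclusion $\wedge^r_\D N \hookrightarrow \lat{r}{}{N}$ is an equality, because the evaluation pairing $\wedge^r_\D N \times \wedge^r_\D N^* \to \D$ is perfect for finitely generated free $\D$-modules. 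Applying this to $N = U_{K,S(\n)}^\chi$ with $r$ replaced by $r+\nu(\n)$ gives the lemma.

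The principal technical point is the compatibility of $\lat{r}{}{-}$ with $\chi$-projection; this requires unpacking Definition \ref{mylat} from the appendix, but once $p \nmid |\Gamma|$ is in force it reduces to the exactness of $(-)^\chi$ together with its compatibility with $\Hom_R(-,R)$ and exterior powers. Granted this, Lemma \ref{7.2} is immediate from the freeness of $U_{K,S(\n)}^\chi$ over the DVR $\D$.
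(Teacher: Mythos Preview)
Your proposal is correct and rests on the same ideas as the paper's proof: $U_{K,S(\n)}$ is torsion-free by the hypothesis on $T$, and $|\Gamma| \in \D^\times$ by \eqref{phyp}, so after tensoring with $\D$ one is in the projective/free situation where the Rubin--Stark lattice coincides with the image of the exterior power.

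The only real difference is the order of operations. The paper first tensors with $\D$ and invokes Lemma \ref{2.5new} directly over $\D[\Gamma]$ (using that $|\Gamma|$ is a unit there, so $U_{K,S(\n)}\otimes\D$ is projective), obtaining $\lat{r+\nu(\n)}{}{U_{K,S(\n)}}\otimes\D = \wedge^{r+\nu(\n)}U_{K,S(\n)}\otimes\D$, and only then takes $\chi$-components. You instead take $\chi$-components first and then argue over the DVR $\D$. Your route therefore requires the extra compatibility statement $(\lat{r+\nu(\n)}{}{U_{K,S(\n)}})^\chi = \lat{r+\nu(\n)}{}{U_{K,S(\n)}^\chi}$, which you correctly flag as the main technical point; the paper's ordering sidesteps this step entirely by appealing to Lemma \ref{2.5new} before projecting.
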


\begin{proof}
By our choice of $T$, the group $U_{K,S(\n)}$ is torsion-free.  Since $p \nmid [K:k]$, 
we have $[K:k] \in \D^\times$, so $U_{K,S(\n)}\otimes \D$ is a projective 
$\D[\Gamma]$-module.  It now follows from Lemma \ref{2.5new} that
$$
\lat{r+\nu(\n)}{}{U_{K,S(\n)}} \otimes \D = \wedge^{r+\nu(\n)}U_{K,S(\n)}\otimes \D.
$$
Taking $\chi$-components proves the lemma.
\end{proof}

Define
$$
\Np = \{\n\in\N : \text{$\n$ is prime to $p$}\}.
$$
For $\n\in\Np$ recall that $H(\n) := \Gal(K(\n)/K)$, and
$\A_{H(\n)} \subset \D[H(\n)]$ is the augmentation ideal.
Define an ideal $I_\n \subset \D$ by
$$
I_\n := \sum_{\q\mid\n}([k(\q):k]\D)
$$ 
(with the convention $I_1 = 0$).
Let $W_{K,\n}$ denote the free abelian group on the set of primes of $K$ dividing $\n$, 
so $W_{K,S'(\n)} = W_{K,S'} \oplus W_{K,\n}$ and 
\begin{equation}
\label{(8.3)}
\wedge^{r+\nu(\n)} W^*_{K,S'(\n)} = \wedge^{\nu(\n)} W^*_{K,\n} \otimes \wedge^r W^*_{K,S'}.
\end{equation}

\begin{defn}
For every $\n\in\Np$, define
$$
Y_\n := \wedge^{r+\nu(\n)}U_{K,S(\n)}^\chi \otimes \wedge^{\nu(\n)} (W^*_{K,S'(\n)})^\chi \otimes (\D/I_\n).
$$
If $\m \mid \n$, we define a map 
$$
\Psi_{\n,\m} : Y_\n \too Y_\m \otimes (\D/I_\n)
$$
as follows.  
Fix a prime factorization $\n/\m = \q_1 \cdots \q_t$ and for each $i$ fix a prime 
$\fQ_i$ of $K$ above $\q_i$.  Define $\psi_i \in U_{K,S(\n)}^*$ by 
$\psi_i(u) = \sum_{\gamma\in\Gamma}\ord_{\fQ_i}(u^\gamma)\gamma^{-1}$.
By Definition \ref{wdg1} we get a map 
$$
\psi_1 \wedge \cdots \wedge \psi_t : \wedge^{r+\nu(\n)}U_{K,S(\n)}^\chi  \too 
   \wedge^{r+\nu(\m)}U_{K,S(\n)}^\chi 
$$
and by \cite[Lemma 5.1]{stark} or \cite[Proposition A.1]{gen.kolysys} 
the image of this map is contained in $\wedge^{r+\nu(\m)}U_{K,S(\m)}^\chi$.
Further, viewing $\fQ_1 \wedge \cdots \wedge \fQ_t$ as a generator of $\wedge^t W_{\n/\m}$
the map
\begin{multline}
\label{(9.5)}
(\psi_1 \wedge \cdots \wedge \psi_t) \otimes (\fQ_1 \wedge \cdots \wedge \fQ_t) \\
   : \wedge^{r+\nu(\n)}U_{K,S(\n)}^\chi \otimes \wedge^t (W_{\n/\m}^*)^\chi  \too 
   \wedge^{r+\nu(\m)}U_{K,S(\m)}^\chi 
\end{multline}
is independent of the choice of the $\fQ_i$ and the order of the $\q_i$.  
Now we define $\Psi_{\n,\m}$ to be the composition
\begin{align*}
Y_\n &= \wedge^{r+\nu(\n)}U_{K,S(\n)}^\chi 
   \otimes \wedge^{\nu(\n)} (W^*_{K,S'(\n)})^\chi \otimes (\D/I_\n) \\
   &\xrightarrow{\;\sim\;} \wedge^{r+\nu(\n)}U_{K,S(\n)}^\chi 
      \otimes \wedge^{\nu(\m)} (W^*_{K,S'(\m)})^\chi 
      \otimes \wedge^{\nu(\n/\m)} (W^*_{\n/\m})^\chi \otimes (\D/I_\n) \\
   &\too \wedge^{r+\nu(\m)}U_{K,S(\m)}^\chi 
   \otimes \wedge^{\nu(\m)} (W^*_{K,S'(\m)})^\chi \otimes (\D/I_\n) = Y_\m \otimes (\D/I_\n),
\end{align*}
where the last map is induced by \eqref{(9.5)}.  Note that $\Psi_{\n,\m}$ is the map $\Phi$ of 
\cite[\S5]{stark}.
\end{defn}

Using Lemma \ref{7.2} we can view $\epsilon_{\n}^\chi \in Y_\n$, where $\epsilon_{\n}$ is 
the element of Definition \ref{xindef} predicted by Conjecture $\St(K(\n)/k,S(\n),T,S')$.
The following lemma allows us to apply the results of \cite{gen.kolysys} 
to the family of $Y_\n$.

\begin{lem}
\label{8.1}
The modules $Y_\n$ and the maps $\Psi_{\n,\m}$ defined above are the same as the $Y_\n$ 
and $\Psi_{\n,\m}$ of \cite[Definition {\yndef2}]{gen.kolysys} for the 
Galois representation $\Mchi$. 
\end{lem}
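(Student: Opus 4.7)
The plan is to unpack \cite[Definition {\yndef2}]{gen.kolysys} and compare it term by term with the definitions given above. In that reference, attached to a Galois representation $T$ (here $\Mchi$) equipped with a choice of Selmer structure, the module $Y_\n$ is built from an exterior power of a Selmer group at level $\n$, tensored with exterior powers of dual modules attached to the primes dividing $\n$ (encoding the singular quotients) and to the distinguished places in $S'$. The transition maps $\Psi_{\n,\m}$ are assembled out of the local finite-to-singular comparison maps at the primes dividing $\n/\m$. The key bridge that allows one to recognize each of these data on the unit side is the Kummer identification of Proposition \ref{h1u}.

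First I would identify $U_{K,S(\n)}^\chi$ with the appropriate Selmer module for $\Mchi$. Proposition \ref{h1u} gives Galois-equivariant isomorphisms $(K^\times)^\chi \cong H^1(k,\Mchi)$ together with the local analogues. The condition $\ord_\lambda(x) = 0$ for $\lambda \notin S(\n)_K$ corresponds on the cohomological side to the unramified (``finite'') local condition at primes not in $S(\n)$, while the congruence $x \equiv 1 \pmod{\lambda}$ for $\lambda \in T_K$ corresponds to the transverse condition at places in $T$ used in \cite{gen.kolysys}, which is well defined thanks to hypothesis \eqref{phyp}. Lemma \ref{nope} shows that the passage from $A_{S(\n)}^\times$ to $U_{K,S(\n)}$ is invisible on $\chi$-parts, and Lemma \ref{7.2} replaces the naive exterior power by the lattice $\lat{r+\nu(\n)}{}{U_{K,S(\n)}}$ appearing in Conjecture $\St$.

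Next I would match the factor $\wedge^{\nu(\n)} (W^*_{K,S'(\n)})^\chi \otimes (\D/I_\n)$ against the ``singular'' and ``core'' dual factors of \cite[Definition {\yndef2}]{gen.kolysys}. Using \eqref{(8.3)} it splits as $\wedge^r (W^*_{K,S'})^\chi \otimes \wedge^{\nu(\n)} W^*_{K,\n} \otimes (\D/I_\n)$, in which the first factor is the dual module associated to the distinguished splitting places in $S'$, and the second encodes the Kolyvagin primes dividing $\n$. The tensor with $\D/I_\n$ records the fact that $\A_{H(\q)}/\A_{H(\q)}^2 \cong H(\q)$ has order $[k(\q):k]$, which is exactly the size of the singular quotient $H^1_s(k_\q,\Mchi)$ under the running assumptions on $\q$ and $p$; this matches how the singular factors are normalized in \cite{gen.kolysys}.

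Finally I would check that \eqref{(9.5)} coincides with the iterated finite-to-singular comparison map of \cite[Definition {\yndef2}]{gen.kolysys}. Under the Kummer isomorphism, the homomorphism $\psi_i : u \mapsto \sum_{\gamma\in\Gamma} \ord_{\fQ_i}(u^\gamma)\gamma^{-1}$ corresponds on cohomology to the projection onto the singular quotient at $\q_i$, once $\A_{H(\q_i)}/\A_{H(\q_i)}^2$ is identified with that quotient via local reciprocity; the fact that the wedge $\psi_1 \wedge \cdots \wedge \psi_t$ lands in $\wedge^{r+\nu(\m)}U_{K,S(\m)}^\chi$ is \cite[Lemma 5.1]{stark} or \cite[Proposition A.1]{gen.kolysys}, already cited above. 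The main obstacle will be careful bookkeeping: tracking the identification of $\A_{H(\q)}/\A_{H(\q)}^2$ with $H^1_s(k_\q,\Mchi)$ through the various tensor products, and verifying that the signs and the ordering of the $\q_i$ and $\fQ_i$ match the conventions of \cite{gen.kolysys}. No new ingredients beyond these verifications are needed, so the argument reduces to an unwinding of definitions on both sides.
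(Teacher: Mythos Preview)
Your proposal is correct and follows essentially the same approach as the paper, which simply declares the proof an exercise using the Kummer isomorphisms of Proposition~\ref{h1u} together with Lemma~\ref{nope}; you have fleshed out exactly that exercise. One small slip: the decomposition \eqref{(8.3)} splits $\wedge^{r+\nu(\n)} W^*_{K,S'(\n)}$, not $\wedge^{\nu(\n)} W^*_{K,S'(\n)}$, so your second paragraph should invoke the direct-sum decomposition $W_{K,S'(\n)} = W_{K,S'} \oplus W_{K,\n}$ directly rather than \eqref{(8.3)}.
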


\begin{proof}
The proof is an exercise, using the natural Kummer theory isomorphisms 
$(K^\times)^\chi \cong H^1(k,\Mchi)$ and 
$((K \otimes k_v)^\times)^\chi \cong H^1(k_v,\Mchi)$ 
for places $v$ of $k$ (Proposition \ref{h1u}), 
along with Lemma \ref{nope}.
\end{proof}

\begin{defn}
As in \cite[Definition {\yndef2}]{gen.kolysys} we say that a collection 
$$
\{\sigma_\n \in Y_\n : \n\in\Np\}
$$
is a {\em Stark system of rank $r$} if
$$
\Psi_{\n,\m}(\sigma_\n) = \sigma_\m \otimes 1 \in Y_\m \otimes (\D/I_\n)
   \quad\text{whenever $\m\mid\n \in \Np$.}
$$
Let $\SS_r(\Mchi)$ denote the $\D$-module of Stark systems of rank $r$.
\end{defn}

Suppose for the rest of this section that Conjecture $\St(K/k,S(\n),T,S'(\n))$ holds 
for every $\n\in\N$.  
Recall that $\bw_K^*$ is the generator of $\wedge^r W^*_{K,S'}$ fixed at the beginning 
of \S\ref{essect}, and $\one$ denotes the trivial character of $\Gamma$.

\begin{defn}
\label{sd}
For $\n\in\N$ let 
$\delta_{\n} \in (\wedge^{r+\nu(\n)}{U_{K,S(\n)}}) \otimes \wedge^{\nu(\n)} (W^*_{K,\n})$ 
be the unique element such that 
$$
\delta_{\n} \otimes \bw_K^* := \epsilon_{K,S(\n),S'(\n)} 
   \in (\wedge^{r+\nu(\n)}{}{U_{K,S(\n)}}) \otimes \wedge^{r+\nu(\n)} W^*_{K,S'(\n)}
$$
is the element predicted by Conjecture $\St(K/k,S(\n),T,S'(\n))$, using 
the identifications of Lemma \ref{7.2} and \eqref{(8.3)}.  
Then 
$$
\delta_\n^\chi \otimes 1 \in 
   \wedge^{r+\nu(\n)}U_{K,S(\n)}^\chi \otimes \wedge^{\nu(\n)} (W^*_{K,S'(\n)})^\chi \otimes (\D/I_\n) = Y_\n, 
$$
and we denote by $\stsys^\chi$ the collection $\{\delta_\n^\chi \otimes 1 \in Y_\n : \n\in\Np\}$.
\end{defn}

\begin{prop}
\label{7.4}
We have $\stsys^\chi \in \SS_r(\Mchi)$, i.e., $\stsys^\chi$ 
is a Stark system of rank $r$.
\end{prop}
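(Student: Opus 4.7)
The plan is to verify the Stark system compatibility
\[
\Psi_{\n,\m}(\delta_\n^\chi \otimes 1) = (\delta_\m^\chi \otimes 1) \otimes 1 \quad \text{in } Y_\m \otimes (\D/I_\n)
\]
for every pair $\m \mid \n$ in $\Np$, by reducing it to the valuation/distribution relation among Rubin--Stark elements proved in \cite{stark}. Since $\delta_\n$ is defined so that $\delta_\n \otimes \bw_K^* = \epsilon_{K,S(\n),T,S'(\n)}$, both sides of the identity are read off directly from the Rubin--Stark elements at levels $\n$ and $\m$.

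First I would unwind the definition of $\Psi_{\n,\m}$. Fixing a factorization $\n/\m = \q_1 \cdots \q_t$ and primes $\fQ_i \mid \q_i$ of $K$, the map is the composition of the valuation functionals $\psi_i(u) = \sum_{\gamma\in\Gamma}\ord_{\fQ_i}(u^\gamma)\gamma^{-1}$ with the splitting $W^*_{K,S'(\n)} = W^*_{K,S'(\m)} \oplus W^*_{\n/\m}$ that isolates the $\fQ_1 \wedge \cdots \wedge \fQ_t$ component. Crucially, the authors have already observed that $\Psi_{\n,\m}$ \emph{is} the map $\Phi$ of \cite[\S5]{stark}, so the claimed identity is a reformulation of the $\chi$-component of the standard ``valuation/distribution'' relation between the Rubin--Stark elements $\epsilon_{K,S(\n),T,S'(\n)}$ and $\epsilon_{K,S(\m),T,S'(\m)}$.

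Second I would invoke that relation from \cite{stark}: applying $\Phi$ to $\epsilon_{K,S(\n),T,S'(\n)}$ produces $\epsilon_{K,S(\m),T,S'(\m)}$ up to an error lying in $\sum_{\q \mid (\n/\m)} [k(\q):k]\,\D \subset I_\n$, which is precisely why the codomain of $\Psi_{\n,\m}$ is reduced modulo $I_\n$. Passing to the $\chi$-component is harmless here: the hypothesis \eqref{phyp} gives $[K:k] \in \D^\times$, Lemma \ref{nope} ensures no loss in moving between $S$-units and $S$-integers, and Lemma \ref{7.2} identifies the lattice $(\lat{r+\nu(\n)}{}{U_{K,S(\n)}})^\chi$ with the honest exterior power $\wedge^{r+\nu(\n)}U_{K,S(\n)}^\chi$ on which $\psi_1 \wedge \cdots \wedge \psi_t$ acts by the usual determinantal formula.

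The main obstacle is bookkeeping: tracking sign conventions for the wedge products, the consistency of the fixed generator $\bw_K^*$ under the decomposition \eqref{(8.3)}, and the compatibility of the valuation maps $\psi_i$ with the normalization used to define $\Phi$ in \cite[\S5]{stark}. Once these are lined up, the Stark system condition for every pair $\m \mid \n$ in $\Np$ is essentially automatic, and combining with Lemma \ref{8.1} we conclude that $\stsys^\chi$ is a Stark system of rank $r$ for $\Mchi$ in the sense of \cite[Definition {\yndef2}]{gen.kolysys}.
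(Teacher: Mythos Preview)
Your proposal is correct and follows essentially the same route as the paper: both reduce the compatibility $\Psi_{\n,\m}(\delta_\n^\chi\otimes 1)=\delta_\m^\chi\otimes 1$ to the valuation relation among Rubin--Stark elements, which the paper dispatches in one line by citing \cite[Proposition~5.2]{stark}. One minor point: that relation is in fact exact rather than ``up to an error in $I_\n$,'' so the reduction modulo $I_\n$ is not needed to absorb any discrepancy here, though it does no harm.
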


\begin{proof}
If $\n\in\N$ and $\m \mid \n$, then $\Psi_{\n,\m}(\delta_{\n}^\chi\otimes 1) = \delta_{\m}^\chi \otimes 1$
by \cite[Proposition 5.2]{stark}.
\end{proof}

Let $r(\chi,S)$ be as in Definition \ref{4.5}.

\begin{lem} 
\label{8.7}
\begin{enumerate}
\item
If $r(\chi,S) > r$, then $\delta_\n^\chi = 0$ for every $\n\in\N$.
\item
If $r(\chi,S) = r$, then
$\delta_{\n}^\chi$ is a nonzero element of the free, rank-one $\D$-module 
$\wedge^{r+\nu(\n)} U_{K,S(\n)}^\chi \otimes \wedge^{\nu(\n)} (W^*_{K,\n})^\chi$.
\end{enumerate}
\end{lem}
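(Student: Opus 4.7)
The plan is to decompose $\epsilon_{K,S(\n),T,S'(\n)}$ into its $\chi$-components and treat each part of the lemma separately. Part (i) will follow from the $\rho_{K,r+\nu(\n)}$-torsion annihilator condition imposed by the Stark conjecture, while part (ii) will follow from the nonvanishing of the leading term of the Artin $L$-function together with the injectivity of the classical regulator map $\cR^\infty$.

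The crucial preliminary observation is that by the definition of $\N$ and $S'$ in \S\ref{ov}, every prime $\q\mid\n$ splits completely in $K/k$, and so does every $v\in S'$. Consequently $\chi(\Gamma_v)=1$ for all $v\in S'(\n)$ and every $\chi\in\hat\Gamma$, so Definition \ref{4.5} gives
\[
r(\chi,S(\n)) \;=\; r(\chi,S) + \nu(\n) \quad\text{for every } \chi\in\hat\Gamma.
\]
Since $r(\chi,S)\ge r$ always, we deduce that the $\chi$-component of $\rho_{K,r+\nu(\n)}$ equals $e_\chi$ when $r(\chi,S)>r$ and $0$ when $r(\chi,S)=r$. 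For part (i), the assumed conjecture $\St(K/k,S(\n),T,S'(\n))$ places $\epsilon_K := \epsilon_{K,S(\n),T,S'(\n)}$ in the $\rho_{K,r+\nu(\n)}$-torsion, so when $r(\chi,S)>r$ we have $\epsilon_K^\chi = e_\chi\cdot\rho_{K,r+\nu(\n)}\cdot \epsilon_K = 0$, and hence $\delta_\n^\chi=0$ by Definition \ref{sd}.

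For part (ii), assume $r(\chi,S)=r$, so $r(\chi,S(\n))=r+\nu(\n)$. First I would verify that the target is free of rank one over $\D$: because $U_{K,S(\n)}$ is torsion-free and $|\Gamma|\in\D^\times$, the $\chi$-component $U_{K,S(\n)}^\chi$ is a projective, hence free, $\D$-module of rank equal to $\dim_\C e_\chi(U_{K,S(\n)}\otimes\C) = r(\chi,S(\n)) = r+\nu(\n)$, so $\wedge^{r+\nu(\n)}U_{K,S(\n)}^\chi$ is free of rank one. Because each $\q\mid\n$ splits completely in $K/k$, the group $W_{K,\n}$ is free of rank $\nu(\n)$ over $\Z[\Gamma]$, and so $\wedge^{\nu(\n)}(W_{K,\n}^*)^\chi$ is free of rank one over $\D$, yielding the claim. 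To produce nonvanishing of $\delta_\n^\chi$, observe that $(\bw_K^*)^\chi$ is a $\D$-basis of the free rank-one $\D$-module $(\wedge^rW_{K,S'}^*)^\chi$, so $\delta_\n^\chi\ne 0$ iff $\epsilon_K^\chi\ne 0$. Taking the $\chi$-component of $\cR^\infty(\epsilon_K)=\lim_{s\to 0}s^{-(r+\nu(\n))}\stick_{K/k,S(\n),T}(s)$ gives
\[
\cR^\infty(\epsilon_K^\chi) \;=\; \lim_{s\to 0} s^{-(r+\nu(\n))} L_{S(\n),T}(K/k;\bar\chi,s)\, e_\chi,
\]
which is nonzero because the order of vanishing at $s=0$ of the Artin $L$-function equals $r(\chi,S(\n))=r+\nu(\n)$. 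The injectivity of $\cR^\infty$ on $(\lat{r+\nu(\n)}{}{U_{K,S(\n)}})[\rho_{K,r+\nu(\n)}]\otimes_\Gamma\wedge^{r+\nu(\n)}W^*_{K,S'(\n)}$ noted after the statement of $\St(K/k,S,T,S')$ then forces $\epsilon_K^\chi\ne 0$.

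The main obstacle is the freeness claim in part (ii): correctly identifying the $\D$-rank of both $U_{K,S(\n)}^\chi$ and $(W_{K,\n}^*)^\chi$ requires the Dirichlet-type dimension computation \eqref{(3.5)} together with the splitting behavior of the primes in $S'$ and $\n$. Once the target is identified, nonvanishing is an immediate consequence of the classical fact that the leading term of an Artin $L$-function at $s=0$ is nonzero.
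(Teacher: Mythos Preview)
Your proof is correct and follows essentially the same approach as the paper's own proof, which is terser: the paper simply invokes ``the basic properties of Conjecture $\St(K/k,S(\n),T,S'(\n))$'' to assert the biconditional $\delta_\n^\chi\ne 0 \iff r(\chi,S(\n))=r+\nu(\n) \iff r(\chi,S)=r$, whereas you spell out both directions via the $\rho_{K,r+\nu(\n)}$-torsion condition and the nonvanishing of the $L$-function leading term. One small remark: for the nonvanishing in part~(ii) you do not actually need injectivity of $\cR^\infty$---it suffices that $\cR^\infty$ is a homomorphism, so $\cR^\infty(\epsilon_K^\chi)\ne 0$ already forces $\epsilon_K^\chi\ne 0$.
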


\begin{proof}
The $\ZGamma$-module $W^*_{K,\n}$ is free of rank $\nu(\n)$.  
By the basic properties of Conjecture $\St(K/k,S(\n),T,S'(\n))$ we have 
$$
\delta_\n^\chi \ne 0 \iff r(\chi,S(\n)) = r + \nu(\n) \iff r(\chi,S) = r,
$$
and if these equivalent conditions hold then $U_{K,S(\n)}^\chi$ is free of rank 
$r+\nu(\n)$ over $\D$.  
The lemma follows.
\end{proof}

\section{The case $r=1$}
\label{r=1}

Keep the setting and notation of the previous two sections.  In this section 
we will prove (Theorem \ref{9.7}) a part of Conjecture \ref{conj}(ii) when $r=1$.
The idea of the proof is as follows.  

The Stark system $\stsys^\chi$ of \S\ref{kssect} gives rise (via 
an explicit construction) to a Kolyvagin system for $\Mchi$.  
When $r=1$, the Euler system of Stark elements of Theorem \ref{6.5} also gives rise
(via an explicit construction) to a Kolyvagin system for $\Mchi$.  The $\D$-module of Kolyvagin 
systems for $\Mchi$ is free of rank one, and the two Kolyvagin systems agree when $\n = 1$ 
by construction.  Hence the two Kolyvagin systems agree for every $\n$, and unwinding the two 
explicit constructions shows that the agreement for $\n$ is equivalent to the ``$(p,\chi)$-part'' 
of Conjecture \ref{conj}(ii) for $(K(\n)/K/k,S(\n),T,S')$.

As in \S\ref{ov}, if $\m\mid\n$ we can view $H(\m)$ as both a subgroup and a quotient 
of $H(\n)$, and $\pi_\m : H(\n) \onto H(\m) \hookto H(\n)$ is the projection map.

\begin{defn}
\label{Mdef}
If $\n \in \N$ and $\d = \prod_{i=1}^t \q_i$ divides $\n$, let $M_{\n,\d} = (m_{ij})$ 
be the $t \times t$ matrix with entries in $\A_{H(\n)}/\A_{H(\n)}^2$
$$
m_{ij} = 
\begin{cases}
\pi_{\n/\d}(\Frob_{\q_i}-1) & \text{if $i = j$},\\
\pi_{\q_j}(\Frob_{\q_i}-1) & \text{if $i \ne j$},
\end{cases}
$$
and define
$$
\DD_{\n,\d} := \det(M_{\n,\d}) \in \A_{H(\n)}^t/\A_{H(\n)}^{t+1}
$$
(this is independent of the ordering of the prime factors of $\d$).
By convention we let $\DD_{\n,1} = 1$.
For $\n \in\N$, let $\BB_\n$ denote the cyclic group
$$
\BB_\n := \{\textstyle\prod_{\q\mid\n}(\gamma_\q - 1) : 
   \gamma_\q \in H(\q)\} \subset \A_{H(\n)}^{\nu(\n)}/\A_{H(\n)}^{\nu(\n)+1}.
$$
\end{defn}
By \cite[Proposition 4.2]{darmon.conj}, $\BB_\n$ is a direct summand of 
$\A_{H(\n)}^{\nu(\n)}/\A_{H(\n)}^{\nu(\n)+1}$.

Let $\KS_r(\Mchi)$ denote the $\D$-module of Kolyvagin systems of rank $r$ for $\Mchi$ 
(with the natural Selmer structure of \cite[\S\exgm]{gen.kolysys}) as 
defined in \cite[\S\ksp]{gen.kolysys} (see also \cite[\S\exgm]{gen.kolysys} and 
\cite[\S3.1 and \S6.1]{kolysys}).  A Kolyvagin system of rank $r$ 
for $\Mchi$ is a collection 
$$
\{\kappa_\n \in \wedge^r U_{K,S(\n)}^\chi \otimes \BB_\n : \n \in \Np\}
$$
satisfying properties that we do not need to review here.  We are identifying 
$\otimes_{\q\mid\n}H(\q)$ with $\BB_\n$ via 
$\otimes_\q \gamma_\q \mapsto \prod_\q(\gamma_\q-1)$.

\begin{defn}
\label{def9.3}
For $\n\in\N$ let 
$\delta_\n^\chi \in \wedge^{r+\nu(\n)}U_{K,S(\n)}^\chi 
   \otimes \wedge^{\nu(\n)}(W_{K,\n}^*)^\chi$ 
be as in Definition \ref{sd}, and define
$$
\beta^\St_\n := \sum_{\d\mid\n} \cR_{K(\d)/K}^\Artin(\delta_\d^\chi)\cdot\DD_{\n,\n/\d}
   \in \wedge^r U_{K,S(\n)}^\chi \otimes \A_{H(\n)}^{\nu(\n)}/\A_{H(\n)}^{\nu(\n)+1}.
$$
\end{defn}

\begin{prop}
\label{9.3}
For $\n\in\N$ we have $\beta^\St_\n \in \wedge^r U_{K(\n),S(\n)}^\chi \otimes \BB_\n$, and 
the collection
$$
\boldsymbol{\beta}^\St := \{\beta^\St_\n : \n\in\Np\}
$$
is a Kolyvagin system of rank $r$ for $\Mchi$.
\end{prop}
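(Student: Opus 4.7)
The plan is to identify $\boldsymbol{\beta}^\St$ with the image of the Stark system $\stsys^\chi$ under the canonical $\D$-module homomorphism
$$
\rks : \SS_r(\Mchi) \too \KS_r(\Mchi)
$$
constructed in \cite{gen.kolysys}. Once this identification is in place, both assertions (membership of $\beta^\St_\n$ in $\wedge^r U_{K,S(\n)}^\chi \otimes \BB_\n$ and the Kolyvagin system axioms) are immediate from the fact that $\rks$ lands in $\KS_r(\Mchi)$, and that $\stsys^\chi \in \SS_r(\Mchi)$ by Proposition \ref{7.4}.

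First I would recall the explicit formula for $\rks$. Given a Stark system $\ssys = \{\sigma_\n \in Y_\n\}$, the element $\rks(\ssys)_\n \in \wedge^r U_{K,S(\n)}^\chi \otimes \BB_\n$ is built by running the transition maps $\Psi_{\n,\d}$ applied to $\sigma_\d$ for each $\d \mid \n$ and then combining them via the determinants $\DD_{\n,\n/\d}$ measuring how $\pi_{\n/\d}(\Frob_\q-1)$ and $\pi_{\q'}(\Frob_\q-1)$ interact modulo $\A_{H(\n)}^{\nu(\n)+1}$. (This is the content of the ``Stark-to-Kolyvagin'' construction in \cite[\S\thirteenone, \S\ksp]{gen.kolysys}.) In particular $\rks$ is a $\D$-linear map, and the Kolyvagin system axioms for $\rks(\ssys)$ follow from the Stark system compatibilities $\Psi_{\n,\m}(\sigma_\n) = \sigma_\m \otimes 1$ together with the multiplicative structure of the symbols $\DD_{\n,\n/\d}$.

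Second, I would unwind the definition of $\cR_{K(\d)/K}^\Artin$ from Definition \ref{5.3} and compare it term-by-term with the formula describing $\rks(\stsys^\chi)_\n$. On the Stark-system side, applying $\Psi_{\n,\d}$ to $\delta_\n^\chi \otimes 1$ returns $\delta_\d^\chi \otimes 1$, and the projection to $\wedge^r U_{K,S(\d)}^\chi$ is accomplished by pairing against the order functionals $\psi_i$ at the primes dividing $\n/\d$; this is precisely the pairing \eqref{labelthis} evaluated at the images under $\sa{K(\d)/K}$ of the relevant dual basis vectors, which is how $\cR^\Artin_{K(\d)/K}(\delta_\d^\chi)$ is defined (after \eqref{sss}). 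Matching the augmentation-quotient factors then identifies the combinatorial coefficients with $\DD_{\n,\n/\d}$. The computation here is the same in spirit as \cite[Proposition 5.2]{stark} and the Möbius-type identities used in the proof of Theorem \ref{ordvan}; it is essentially Lemma \ref{ovlem} evaluated modulo $\A_{H(\n)}^{\nu(\n)+1}$ after passing to $\chi$-components.

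The main obstacle I expect is the bookkeeping in the second step: one must carefully track three different sources of augmentation-quotient data (the twisted trace $\Tw$, the pairing from Corollary \ref{Ipair}, and the determinantal symbols $\DD_{\n,\n/\d}$) and verify that after projecting onto $\BB_\n$ the two explicit formulas agree. Once the identification $\rks(\stsys^\chi) = \boldsymbol{\beta}^\St$ is established, Proposition \ref{7.4} combined with the existence of $\rks$ from \cite{gen.kolysys} gives $\boldsymbol{\beta}^\St \in \KS_r(\Mchi)$, which is exactly the conclusion of the proposition.
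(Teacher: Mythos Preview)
Your strategy is correct and is precisely the content of the references the paper cites: the paper's own proof simply points to \cite[Theorem 8.7 and Proposition 6.5]{darmon.conj} for the special case and to Sano \cite[\S4]{sano2} for the general case, and the argument in those papers is exactly the identification of $\boldsymbol{\beta}^\St$ with the image of the Stark system $\stsys^\chi$ under the Stark-to-Kolyvagin map $\rks$ of \cite{gen.kolysys}. One minor point of confusion in your second paragraph: the order functionals $\psi_i$ at primes dividing $\n/\d$ are already built into $\Psi_{\n,\d}$, while the further projection from $Y_\d$ down to $\wedge^r U_{K,S(\d)}^\chi$ uses the finite-singular (Artin-symbol) maps at primes dividing $\d$; it is this latter step that matches $\cR_{K(\d)/K}^\Artin$, and the passage between the direct definition of $\rks(\stsys^\chi)_\n$ and the sum $\sum_{\d\mid\n}\cR_{K(\d)/K}^\Artin(\delta_\d^\chi)\cdot\DD_{\n,\n/\d}$ is the change of variables supplied by \cite[Proposition 6.5]{darmon.conj}.
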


\begin{proof}
In the special case where $k = \Q$, $S' = \{\infty\}$, 
and $\chi$ is an even quadratic character, this is 
\cite[Theorem 8.7 and Proposition 6.5]{darmon.conj}.  The proof in general is similar.
The general case is also proved by Sano in \cite[\S4]{sano2} (what we call a Stark system 
is called a unit system in \cite{sano2}).
\end{proof}

For the rest of this section we assume that $r = 1$, i.e., $S'$ consists of a 
single archimedean place.  Since $r = 1$, the Stark unit Euler system of 
Theorem \ref{6.5} gives rise, via the map of \cite[Theorem 3.2.4]{kolysys}, to a 
Kolyvagin system of rank one
$$
\stesys = \{\stek_\n : \n\in\Np\} \in \KS_1(\Mchi).
$$ 
(The results of \cite{kolysys} are stated only 
for $k = \Q$, but the proofs in the general case are the same; see \cite{gen.kolysys}.)

\begin{prop}
\label{9.2}
Suppose $\n\in\Np$.   
Under the restriction map $K^\times \to K(\n)^\times$ and the inclusion 
$\BB_\n \subset \A_{H(\n)}^{\nu(\n)}/\A_{H(\n)}^{\nu(\n)+1}$, 
with $\xi_\n$ as in Definition \ref{xindef} we have
$$
\stek_\n \mapsto \sum_{\d\mid\n} \Tw_{K(\d)/K}(\xi_\d^\chi)\cdot\DD_{\n,\n/\d}
   \in U_{K(\n),S(\n)}^\chi \otimes \A_{H(\n)}^{\nu(\n)}/\A_{H(\n)}^{\nu(\n)+1}.
$$
\end{prop}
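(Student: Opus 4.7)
The plan is to prove Proposition \ref{9.2} by unwinding the Kolyvagin-derivative construction that produces $\stesys$ from the Euler system $\{c_\n\}$ of Theorem \ref{6.5}, and then matching the resulting expansion term by term with the sum $\sum_\d \Tw_{K(\d)/K}(\xi_\d^\chi)\DD_{\n,\n/\d}$. The argument parallels (and generalizes) \cite[Theorem 8.7 and Proposition 6.5]{darmon.conj}, where this identification is established in the real quadratic setting; indeed the paper signals that the $r=1$ case proceeds by the method of that earlier work.

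Concretely, for $\n = \q_1 \cdots \q_t \in \Np$, fix a generator $\sigma_i$ of each cyclic group $H(\q_i)$ and form the Kolyvagin derivative operators $D_{\q_i} = \sum_{j=0}^{|H(\q_i)|-1} j\,\sigma_i^j \in \Z[H(\n)]$ and $D_\n = \prod_{i=1}^t D_{\q_i}$. By \cite[Theorem 3.2.4]{kolysys}, $\stek_\n$ is represented by $D_\n c_\n$, i.e.\ by $D_\n \xi_\n^\chi$ under the Kummer identification of Proposition \ref{h1u}, viewed in $U_{K(\n),S(\n)}^\chi \otimes \BB_\n$ after restriction. Using the identity $(\sigma_i-1)D_{\q_i} = |H(\q_i)| - N_{\q_i}$ in $\Z[H(\q_i)]$, an inclusion--exclusion expansion modulo $\A_{H(\n)}^{\nu(\n)+1}$ rewrites $D_\n \xi_\n^\chi$ as a sum over $\d \mid \n$ in which each appearance of $D_{\q_i}$ for $\q_i \mid \n/\d$ is replaced by the norm operator, picking up an Euler factor. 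Applying the Euler system distribution relation of Proposition \ref{6.2},
$$
N_{K(\n)/K(\d)} \xi_\n = \prod_{\q \mid (\n/\d)}\!(1-\Frob_\q^{-1})\,\xi_\d,
$$
collapses these norm pieces to give, for each $\d$, a contribution of the form $\Tw_{K(\d)/K}(\xi_\d^\chi)$ multiplied by a polynomial $P_\d$ in the augmentation classes $\pi_{\q}(\Frob_{\q'}-1)$ with $\q,\q' \mid \n$.

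The combinatorial heart of the proof is then to identify $P_\d$ with the determinant $\DD_{\n,\n/\d}$ of the matrix $M_{\n,\n/\d}$ from Definition \ref{Mdef}. This comes down to recognizing that the Leibniz expansion of $\det(M_{\n,\n/\d})$, whose diagonal entries are $\pi_{\d}(\Frob_{\q_i}-1)$ (for $\q_i \mid \n/\d$) and whose off-diagonal entries are $\pi_{\q_j}(\Frob_{\q_i}-1)$, records exactly the same sum over permutations (with the same signs) as the expansion produced by the Kolyvagin derivative, once one translates Kolyvagin shifts $(\sigma_i - 1)$ into Frobenius shifts via the local Artin map at $\q_i$ and keeps track of where each Frobenius is being projected inside $H(\n)$.

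The main obstacle is this last determinantal/sign-bookkeeping identification: the Kolyvagin derivative, when expanded modulo $\A_{H(\n)}^{\nu(\n)+1}$, a priori produces an unstructured sum of products of augmentation classes, and one must organize it into the precise determinant $\DD_{\n,\n/\d}$ that enters $\beta^\St_\n$. Once this is done uniformly in $\d$, the sum $\sum_{\d \mid \n} \Tw_{K(\d)/K}(\xi_\d^\chi)\,\DD_{\n,\n/\d}$ falls out of the computation, and the resulting class automatically lies in $\BB_\n$ because $\boldsymbol{\beta}^\St$ does (Proposition \ref{9.3}).
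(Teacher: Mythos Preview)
Your outline is essentially what the paper does: the paper's own proof of Proposition \ref{9.2} simply refers to \cite{darmon.conj} (and Sano \cite{sano2}), and the computation you sketch---Kolyvagin derivatives, inclusion--exclusion via $(\sigma_i-1)D_{\q_i} = |H(\q_i)|-N_{\q_i}$, the Euler relation of Proposition \ref{6.2}, and the determinantal bookkeeping---is precisely the content of that reference.

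Two small corrections: the relevant result in \cite{darmon.conj} for this proposition is Theorem 7.2 (together with Proposition 6.5), not Theorem 8.7, which is the analogue for Proposition \ref{9.3}; and your final sentence is slightly off, since $\stek_\n$ already lies in $U_{K,S(\n)}^\chi \otimes \BB_\n$ by definition of a Kolyvagin system, so no appeal to Proposition \ref{9.3} is needed.
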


\begin{proof}
Note that $\Tw_{K(\d)/K}(\xi_\d^\chi)$ lies in 
$U_{K(\n),S(\n)}^\chi \otimes \A_{H(\n)}^{\nu(\d)}/\A_{H(\n)}^{\nu(\d)+1}$ by 
Theorem \ref{ordvan} and Lemma \ref{7.2}, and $\DD_{\n,\n/\d}$ lies in 
$\A_{H(\n)}^{\nu(\n/\d)}/\A_{H(\n)}^{\nu(\n/\d)+1}$ by definition.

In the special case where $k = \Q$ and $\chi$ is a real quadratic character, this is 
\cite[Theorem 7.2 and Proposition 6.5]{darmon.conj}.  The proof in general is the same.
The general case also follows from calculations of Sano \cite[\S3]{sano2}.
\end{proof}

\begin{thm}
\label{9.5}
If $\chi \ne \one$ then for every $\n\in\N$ we have $\stek_\n = \beta^\St_\n$.
\end{thm}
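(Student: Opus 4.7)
The plan is to apply the rank-one uniqueness for Kolyvagin systems sketched in the introduction. Under our hypotheses (with $\chi \ne \one$), the $\D$-module $\KS_1(\Mchi)$ attached to the natural Selmer structure is free of rank one, and evaluation $\kappa \mapsto \kappa_1$ is injective on it whenever $r(\chi,S) = 1$; so to prove the theorem it suffices to verify that both sides are Kolyvagin systems and that they agree at $\n = 1$. That $\stesys$ is a Kolyvagin system of rank one follows from \cite[Theorem 3.2.4]{kolysys} applied to the Euler system of Theorem \ref{6.5}, and $\boldsymbol{\beta}^\St$ is one by Proposition \ref{9.3}.

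The central computation is thus the base case. At $\n = 1$ we have $K(1) = K$, $H(1) = \{1\}$, $\nu(1) = 0$, and $\DD_{1,1} = 1$; by the $s = 0$ portion of the proof of Proposition \ref{5.15}, $\cR^\Artin_{K/K}$ is the canonical identification $\bu \otimes \bw \mapsto \bu \otimes \bw \otimes 1$, so $\beta^\St_1 = \delta_1^\chi$. On the other hand, $\delta_1$ and $\xi_1$ coincide as elements of $\lat{1}{}{U_K}$, since both are uniquely defined by the relation $\bullet \otimes \bw_K^* = \epsilon_{K,S,T,S'}$. Moreover, the Euler-system-to-Kolyvagin-system construction carries the base-field Euler class $c_1$ (the Kummer image of $\xi_1^\chi$) to $\stek_1$ unchanged. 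Hence $\stek_1 = \xi_1^\chi = \delta_1^\chi = \beta^\St_1$.

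Finally, I would separate two cases to conclude. If $r(\chi,S) > 1$, Lemma \ref{8.7}(i) gives $\boldsymbol{\beta}^\St = 0$, and a parallel argument using the decomposition \eqref{six.three}, the splitting hypothesis on primes dividing $\n$, and the annihilation of $\epsilon_\n$ by $\rho_{K(\n),1}$ shows $\xi_\n^\chi = 0$ for every $\n$, so $\stesys = 0$ as well. If instead $r(\chi,S) = 1$, Lemma \ref{8.7}(ii) supplies $\delta_1^\chi \ne 0$, making the common value $\stek_1 = \beta^\St_1$ nontrivial; since $\D$ is an integral domain and $\KS_1(\Mchi)$ is a free rank-one $\D$-module, the evaluation map at $\n = 1$ is then injective, forcing $\stesys = \boldsymbol{\beta}^\St$. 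The main obstacle is the freeness of $\KS_1(\Mchi)$: this is the deep input imported from \cite{gen.kolysys}, which rests on identifying the core rank of $\Mchi$ with its natural Selmer structure as $r(\chi,S)$, a computation that (via the Kummer isomorphism of Proposition \ref{h1u}) reduces to the formula \eqref{(3.5)} for the $\chi$-rank of $U_K$.
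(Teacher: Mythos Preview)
Your strategy and your treatment of the case $r(\chi,S)=1$ match the paper's. The gap is in the case $r(\chi,S)>1$.

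Your claim that $\xi_\n^\chi=0$ for every $\n$ does not follow from annihilation by $\rho_{K(\n),1}$. Using \eqref{six.three}, write $e_\chi = \sum_{\omega} e_{\chi\otimes\omega}$ with $\omega$ running over characters of $H(\n)$; a term $e_{\chi\otimes\omega}\xi_\n$ vanishes only when $r(\chi\otimes\omega,S(\n))\ne 1$. If the ``extra'' place $v_0\in S-S'$ with $\chi(\Gamma_{v_0})=1$ is \emph{nonarchimedean}, then $v_0$ is unramified in $k(\n)/k$, and one may pick $\omega$ with $\omega(\Frob_{v_0})\ne 1$; for such $\omega$ the decomposition group of $v_0$ is not killed by $\chi\otimes\omega$, so $v_0$ no longer contributes and $r(\chi\otimes\omega,S(\n))$ can equal $1$. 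Thus $\xi_\n^\chi$ need not vanish. (Your reasoning is valid only when every such $v_0$ is archimedean, since archimedean places split completely in $k(\n)/k$.) The paper avoids this entirely: from $\stek_1=\xi_1^\chi=0$ and the finiteness of the ideal class group, the structure theory of Kolyvagin systems (\cite[Theorem~\korankk(iv), Proposition~\thirteenone]{gen.kolysys}; see also \cite[Theorem~5.2.12]{kolysys}) forces $\stesys=0$ directly. The required input is thus not merely that $\KS_1(\Mchi)$ is free of rank one, but that evaluation at $\n=1$ is injective on it even when the common value there is zero --- and that injectivity is what the class-group finiteness provides.

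Two smaller points. First, Kolyvagin systems are indexed only by $\n\in\Np$, while the theorem is stated for all $\n\in\N$; the paper handles $\n\in\N-\Np$ separately by observing that then $\BB_\n\otimes\D=0$ (since some $\q\mid p$ makes $|H(\q)|$ prime to $p$). Second, the core rank of $(\Mchi,\cF_{\unr})$ here is $1$ (which is $|S'|$), not $r(\chi,S)$; it is determined by the Selmer structure and does not vary with the auxiliary set $S$.
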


\begin{proof}
Let $r(\chi,S)$ be as in Definition \ref{4.5}, and suppose first that $r(\chi,S) = 1$.  
We have $\stesys, \boldsymbol{\beta}^\St \in \KS_1(\Mchi)$.  Since $\chi \ne \one$ 
and $K$ contains no nontrivial $p$-th roots of unity by Lemma \ref{nope}, all the hypotheses of 
\cite[\S3.5]{kolysys} hold, so $\KS_1(\Mchi)$ is a free $\D$-module 
of rank one by \cite[Theorem 5.2.10]{kolysys}.
We have $\beta^\St_1 = \delta_1^\chi = \xi_1^\chi = \stek_1$ by definition, 
and by Lemma \ref{8.7}(ii) this is a nonzero element of the free, rank-one $\D$-module 
$U_{K,S}^\chi$.  Hence $\boldsymbol{\beta}^\St = \stesys$, i.e., $\stek_\n = \beta^\St_\n$ 
for every $\n\in\Np$.

Now suppose $r(\chi,S) > 1$.  By Lemma \ref{8.7}(i), we have $\delta_\n^\chi = 0$ 
for every $\n$, so $\beta^\St_\n = 0$ for every $\n$.
Since $\stek_1 = 0$, the finiteness of the ideal class group together with 
\cite[Theorem \korankk(iv) and Proposition \thirteenone]{gen.kolysys}
(see also \cite[Theorem 5.2.12]{kolysys}) shows that $\stesys = 0$, 
i.e., $\stek_\n = 0$ for every $\n\in\Np$.

It remains to show that $\stek_\n = \beta^\St_\n \in U_{K,S(\n)}^\chi \otimes \BB_\n$ 
when $\n\in\N-\Np$.  But the exponent of the cyclic group $\BB_\n$ 
is the greatest common divisor of the $|H(\q)|$ for $\q$ dividing $\n$.
If $\q\mid p$ then (since $K(\q)$ is tamely ramified by definition) $H(\q)$ 
has order prime to $p$.  Hence $\BB_\n$ has order prime to $p$ if $\n \in \N-\Np$, 
so $\BB_\n \otimes \D = 0$ and $\stek_\n = \beta^\St_\n = 0$.
This completes the proof.
\end{proof}

\begin{thm}
\label{9.6}
Suppose that $|S'| = 1$, that Conjectures $\St(K/k)$ and $\St(K(\n)/k)$ hold for every $\n$, and 
that at least one of the following holds:
\begin{enumerate}
\renewcommand{\theenumi}{(\alph{enumi})}
\item
$\chi \ne \one$, 
\item
$\chi = \one$ and $|S-S'| \ge 2$,
\item
$\chi = \one$ and $k$ has more than one archimedean place,
\end{enumerate}
Then for every $\n\in\N$,
$$
\Tw_{K(\n)/K}(\epsilon_{K(\n),S(\n),T,S'}^\chi) = 
   \cR_{K(\n)/K}^\Artin(\epsilon_{K,S(\n),T,S'(\n)}^\chi)
$$
in $U_{K(\n),S(\n)} \otimes_{\Gal(K(\n)/k)} W_{K(\n),S'}^* \otimes \A_{H(\n)}^{\nu(\n)}/\A_{H(\n)}^{\nu(\n)+1}$.
In other words, the $(p,\chi)$ part of Conjecture \ref{conj}(ii) holds for $(K(\n)/K/k,S(\n),T,S')$.
\end{thm}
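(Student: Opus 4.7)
The plan is to split the theorem into two cases: (a) $\chi \ne \one$, and (b)/(c) $\chi = \one$ with the auxiliary hypothesis on $|S-S'|$ or on the archimedean places of $k$. These cases require different inputs.

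For case (a), I would invoke Theorem \ref{9.5}, which gives the equality of Kolyvagin systems $\stek_\n = \beta^\St_\n$ for every $\n \in \N$ as a consequence of the uniqueness of rank-one Kolyvagin systems for $\Mchi$. Expanding both sides via Proposition \ref{9.2} and Definition \ref{def9.3} produces
$$
\sum_{\d\mid\n} \Tw_{K(\d)/K}(\xi_\d^\chi)\cdot\DD_{\n,\n/\d}
   \;=\; \sum_{\d\mid\n} \cR^\Artin_{K(\d)/K}(\delta_\d^\chi)\cdot\DD_{\n,\n/\d}
$$
in $U_{K(\n),S(\n)}^\chi \otimes \A_{H(\n)}^{\nu(\n)}/\A_{H(\n)}^{\nu(\n)+1}$. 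The natural next step is induction on $\nu(\n)$: the base case $\n = 1$ is tautological since $\xi_1 = \delta_1 = \epsilon_{K,S,T,S'}$ and $\cR^\Artin_{K/K}$ is the identity, and in the inductive step one isolates the $\d=\n$ summand on each side (note $\DD_{\n,1} = 1$) and observes that for every proper divisor $\d$ the inductive hypothesis $\Tw_{K(\d)/K}(\xi_\d^\chi) \equiv \cR^\Artin_{K(\d)/K}(\delta_\d^\chi) \pmod{\A_{H(\d)}^{\nu(\d)+1}}$ combined with $\DD_{\n,\n/\d} \in \A_{H(\n)}^{\nu(\n/\d)}/\A_{H(\n)}^{\nu(\n/\d)+1}$ forces the non-diagonal terms to cancel modulo $\A_{H(\n)}^{\nu(\n)+1}$. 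What survives is the congruence $\Tw_{K(\n)/K}(\xi_\n^\chi) \equiv \cR^\Artin_{K(\n)/K}(\delta_\n^\chi) \pmod{\A_{H(\n)}^{\nu(\n)+1}}$, and tensoring by the generator $\bw^*_\n$, identified with $\bw^*_K$ via \eqref{sss'}, yields the theorem for the $\epsilon$'s.

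For cases (b) and (c), the plan is to reduce the $(p,\one)$-component statement to Theorem \ref{K=kthm} applied to the tower $(k(\n)/k/k, S(\n), T, S', S'(\n))$: hypothesis (b) matches the first alternative of that theorem, while hypothesis (c) together with $|S'|=1$ implies $S'$ fails to contain all archimedean places of $k$, the second alternative. Using $\Gal(K(\n)/k) = \Gamma \times H(\n)$ from \eqref{six.three}, the $\one$-part of the map $m \mapsto m^\one = [K:k]^{-1}\sum_\gamma m^\gamma$ is essentially the Galois-norm projection, and the norm relation for Rubin-Stark elements (\cite[Proposition 6.1]{stark}) identifies $\epsilon_{K(\n),S(\n),T,S'}^\one$ and $\epsilon_{K,S(\n),T,S'(\n)}^\one$ with $\epsilon_{k(\n),S(\n),T,S'}$ and $\epsilon_{k,S(\n),T,S'(\n)}$ respectively. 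Under this identification, both $\Tw_{K(\n)/K}$ and $\cR^\Artin_{K(\n)/K}$ restrict to $\Tw_{k(\n)/k}$ and $\cR^\Artin_{k(\n)/k}$, and the desired congruence becomes Conjecture \ref{conj}(ii) for $(k(\n)/k/k, S(\n), T, S', S'(\n))$, which is exactly the content of Theorem \ref{K=kthm}.

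The main obstacle is expected to be the bookkeeping in case (a): $\Tw_{K(\d)/K}(\xi_\d^\chi)$ and $\cR^\Artin_{K(\d)/K}(\delta_\d^\chi)$ live naturally over $H(\d)$, but the induction takes place over $H(\n)$, so one must check that the injection $\pi_\d: \A_{H(\d)}^m/\A_{H(\d)}^{m+1} \hookto \A_{H(\n)}^m/\A_{H(\n)}^{m+1}$ is well-behaved under multiplication by $\DD_{\n,\n/\d}$ and yields augmentation degree $\nu(\n)+1$ as claimed. A second subtle point is the identification of $\bw^*_\n$ with $\bw^*_K$, both in the formulation of $\xi$ and $\delta$ and in the passage between $U_{K(\n),S(\n)}^\chi \otimes \wedge^r W^*_{K(\n),S'}$ and $U_{K,S(\n)}^\chi \otimes \wedge^r W^*_{K,S'}$ via the map $\j_{L/K}$ of Lemma \ref{5.6}. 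In case (b)/(c), the reduction relies on the functoriality of Rubin-Stark elements under base change through the $\one$-component; this is essentially standard but does require spelling out.
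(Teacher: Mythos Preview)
Your proposal is correct and follows the same approach as the paper: case (a) is deduced from Theorem \ref{9.5} by induction on $\nu(\n)$ using the expansions in Proposition \ref{9.2} and Definition \ref{def9.3}, while cases (b)/(c) reduce to Theorem \ref{K=kthm}. You supply considerably more detail than the paper's two-line proof---in particular your explicit account of why the $\one$-component statement coincides with the $K=k$ case---but the underlying argument is the same.
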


\begin{proof}
If $\chi \ne \one$, then this follows directly from Theorem \ref{9.5} by induction on $\n$, 
using Proposition \ref{9.2} and Definition \ref{def9.3} for the induction.
If $\chi = \one$, then this is Theorem \ref{K=kthm}.
\end{proof}

Let $\Sigma = \Sigma(K/k,T)$ be the set of  primes dividing 
$[K:k]\prod_{\lambda\in T_K}(\bN\lambda - 1)$.

\begin{thm}
\label{9.7}
Suppose that $|S'| = 1$, that Conjectures $\St(K/k)$ and $\St(K(\n)/k)$ hold for every $\n$, 
and that either $k$ has more than one archimedean place or $|S| \ge 3$.  
Then Conjecture \ref{conj}(ii) holds for $(K(\n)/K/k,S(\n),T,S')$ away from $\Sigma$, i.e., 
for every $p \notin \Sigma$ the leading term formula holds if we tensor with $\Zp$.
\end{thm}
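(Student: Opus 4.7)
The plan is to deduce Theorem \ref{9.7} from Theorem \ref{9.6} by decomposing the identity in Conjecture \ref{conj}(ii) into isotypic components for characters of $\Gamma = \Gal(K/k)$, and then checking that Theorem \ref{9.6} applies for every such character under our hypotheses.

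First I would fix a prime $p \notin \Sigma$. By the definition of $\Sigma$, this means $p \nmid [K:k]\prod_{\lambda\in T_K}(\bN\lambda-1)$, so hypothesis \eqref{phyp} is satisfied and the machinery of \S\ref{kssect} is available. In particular $|\Gamma|=[K:k]$ is a unit in $\Zp$, so $\Zp[\Gamma]$ decomposes as a product of local rings indexed by Galois orbits of characters $\chi$ of $\Gamma$, and every $\Zp[\Gamma]$-module splits canonically as a direct sum of its $\chi$-isotypic components. Applying this to the ambient module
$$
\bigl(\lat{r}{}{U_{K(\n),S(\n)}} \otimes_{\Gal(K(\n)/k)} \wedge^{r} W_{K(\n),S'}^* \otimes_\Z \A_{H(\n)}^{\nu(\n)}/\A_{H(\n)}^{\nu(\n)+1}\bigr) \otimes \Zp,
$$
the identity asserted in Conjecture \ref{conj}(ii) (tensored with $\Zp$) holds if and only if each of its $(p,\chi)$-parts holds as $\chi$ ranges over $\hat\Gamma$.

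Next I would verify that under the hypotheses of Theorem \ref{9.7}, one of the hypotheses (a), (b), (c) of Theorem \ref{9.6} holds for every $\chi \in \hat\Gamma$. If $\chi\neq\one$, hypothesis (a) applies directly. If $\chi=\one$, I need either (b) or (c). Since $|S'|=1$, the condition $|S|\ge 3$ is equivalent to $|S-S'|\ge 2$, which is exactly (b); and the condition that $k$ has more than one archimedean place is exactly (c). Thus the disjunction in the hypothesis of Theorem \ref{9.7} matches precisely the disjunction of (b) and (c) for the trivial character. Theorem \ref{9.6} therefore yields the $(p,\chi)$-part of Conjecture \ref{conj}(ii) for every $\chi$.

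Assembling these $\chi$-components across $\hat\Gamma$ gives the full identity after tensoring with $\Zp$, which is the conclusion of Theorem \ref{9.7}. The main obstacle — really a small verification — is to confirm that the $\chi$-decomposition is compatible with the lattice $\lat{r}{}{U_{K(\n),S(\n)}}$ rather than only with $\wedge^r U_{K(\n),S(\n)}$. This is exactly the content of Lemma \ref{7.2} applied $\chi$ by $\chi$, valid because $p \notin \Sigma$ forces $p \nmid [K:k]$ so that $U_{K(\n),S(\n)}\otimes\D$ is a projective $\D[\Gamma]$-module for each relevant $\D$.
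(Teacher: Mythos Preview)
Your proposal is correct and follows essentially the same approach as the paper's own proof: apply Theorem \ref{9.6} for each $p \notin \Sigma$ and each character $\chi$ of $\Gamma$, checking that the hypotheses of Theorem \ref{9.7} ensure one of (a), (b), (c) holds for every $\chi$, then sum over $\chi$. Your explicit verification that $|S'|=1$ with $|S|\ge 3$ gives $|S-S'|\ge 2$, and your remark about Lemma \ref{7.2} handling the compatibility with $\lat{r}{}{U_{K(\n),S(\n)}}$, simply make explicit what the paper leaves implicit.
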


\begin{proof}
We can apply 
Theorem \ref{9.6} for every prime $p \notin \Sigma$, and every character $\chi$ of 
$\Gamma$.  Summing the conclusion of Theorem \ref{9.6} over all $\chi$ gives 
the equality of Conjecture \ref{conj}(ii) tensored with $\D$.
\end{proof}

\section{Evidence in the case of general $r$}
Keep the notation of the previous sections.
When $r > 1$, the proof of \S\ref{r=1} breaks down.  Namely, 
the elements $\xi_\n$ of Definition \ref{xindef} naturally form an Euler 
system of rank $r$, but when $r > 1$ we do not know how to use this Euler system to 
produce a Kolyvagin system of rank $r$.  
However, using ideas of \cite[\S6]{stark} and \cite{kazim} we define a family 
of ``projectors'' $\Phi$, each of which maps the collection $\{\xi_\n^\chi\}$ to an 
Euler system $\boldsymbol{\xi}^\St_\Phi$ of rank one, and maps the rank-$r$ Kolyvagin system 
$\boldsymbol{\beta}^\St$ to a rank-one Kolyvagin system $\boldsymbol{\beta}^\St_\Phi$.
We can associate to $\boldsymbol{\xi}^\St_\Phi$ a 
Kolyvagin system $\stesys_\Phi$ of rank one, and the arguments of \S\ref{r=1} 
will show that $\boldsymbol{\beta}^\St_\Phi = \stesys_\Phi$.  
Unwinding the definitions, this shows that the $\Phi$-projection of the 
leading term formula of Conjecture \ref{conj} holds.

For this section we make the extra assumptions that 
\begin{itemize}
\item
$S$ contains no primes above $p$,
\item
$k$ is totally real of degree $r$ and $S'$ is the set of its archimedean places,
\item
Leopoldt's conjecture holds for $K$.
\end{itemize}
In particular $K$ is totally real and $K/k$ is unramified above $p$.

\begin{defn}
\label{11.1}
For every $\n \in \Np$ let $V_{K(\n)}$ denote the $p$-adic completion of the local units 
of $K(\n) \otimes \Qp$, and $V_{K(\n)}^* := \Hom_{\Gal(K(\n)/k)}(V_{K(\n)},\Zp[\Gal(K(\n)/k)])$.  
If $\phi \in V_{K(\n)}^*$, then $\tilde\phi$ will denote the composition
$$
\tilde\phi : U_{K(\n),S(\n)} \too V_{K(\n)} \too \Zp[\Gal(K(\n)/k)].
$$

Define
$
V_\infty^* := \varprojlim V_{K(\n)}^*,
$
where the inverse limit is taken with respect to the maps 
$V_{K(\n\q)}^* \to V_{K(\n)}^*$ induced by 
$$
V_{K(\n)} \subset V_{K(\n\q)}, \quad
   \Zp[\Gal(K(\n\q)/k)]^{\Gal(K(\n\q)/K(\n))} = \Zp[\Gal(K(\n)/k)].
$$
If $\Phi := \phi_1\wedge\ldots\wedge\phi_{r-1} \in \wedge^{r-1}V_\infty^*$, 
with $\phi_i \in V_\infty^*$, let 
$$
\phi_{i,K(\n)} : V_{K(\n)} \too \Zp[\Gal(K(\n)/k)]
$$ 
denote the projection of $\phi_i$ to $V_{K(\n)}^*$, let
$$
\tilde\Phi_{K(\n)} := \tilde\phi_{1,K(\n)}\wedge\cdots\wedge\tilde\phi_{r-1,K(\n)} 
   : \lat{r}{}{U_{K(\n),S(\n)}} \too U_{K(\n),S(\n)}
$$
be the map of Definition \ref{wdg1} (combined with Lemmas \ref{2.5new} and \ref{rs}), and let 
$$
\cL_\Phi := \cap_i\ker(\phi_{i,K}) \subset V_{K}.
$$
Using the identification $V_K^\chi \subset \oplus_{\p\mid p}H^1(k_\p,\Mchi)$ of 
Proposition \ref{h1u}, we define a Selmer structure 
(see \cite[Definition \selmerdef]{gen.kolysys} or \cite[Definition 2.1.1]{kolysys}) 
$\cF_\Phi$ on $\Mchi$ by modifying the natural 
Selmer structure $\cF_{\unr}$ of \cite[\S\exgm]{gen.kolysys} at primes 
above $p$, namely we set 
$$
\oplus_{\p\mid p}H^1_{\cF_\Phi}(k_\p,\Mchi) 
   := \cL_\Phi^\chi \subset V_K^\chi \subset \oplus_{\p\mid p}H^1(k_\p,\Mchi).
$$
\end{defn}

Let $\xi_\n\in\lat{r}{}{U_{K(\n),S(\n)}}$ be as in Definition \ref{xindef}, 
and recall the Kolyvagin system 
$\boldsymbol{\beta}^\St = \{\beta^\St_\n : \n\in\Np\} \in \KS_r(\Mchi)$ of 
Definition \ref{def9.3} and Proposition \ref{9.3}.

\begin{prop}
\label{11.2}
Suppose $\Phi := \phi_1\wedge\ldots\wedge\phi_{r-1} \in \wedge^{r-1}V_\infty^*$.
\begin{enumerate}
\item
The collection 
$\{\tilde\Phi_{K(\n)}(\xi_\n^\chi) \in U_{K(\n),S(\n)}^\chi : \n\in\Np\}$
is an Euler system of rank one for the representation $\Mchi$.
\item
Let $\stesys_\Phi = \{\stek_{\Phi,\n} : \n\in\Np\} \in \KS_1(\Mchi)$ 
be the Kolyvagin system of rank one attached to the Euler system of (i) by 
\cite[Theorem 3.2.4]{kolysys}.  Then $\stesys_\Phi \in \KS_1(\Mchi,\cF_\Phi)$, 
where $\cF_\Phi$ is the Selmer structure of Definition \ref{11.1}.
\item
The collection
$\boldsymbol{\beta}^\St_\Phi := \{\tilde\Phi_K(\beta^\St_\n) : \n\in\Np\}$
is a Kolyvagin system of rank one for $(\Mchi,\cF_\Phi)$.
\end{enumerate}
\end{prop}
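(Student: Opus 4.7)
The plan rests on two properties of the construction. First, $\Phi\in\wedge^{r-1}V_\infty^*$ is built from a norm-compatible system, so the maps $\tilde\Phi_{K(\n)}$ intertwine with norms across the tower in the sense that $\bN_{K(\n)/K(\m)}\circ\tilde\Phi_{K(\n)} = \tilde\Phi_{K(\m)}\circ \bN^r_{K(\n)/K(\m)}$ on $\lat{r}{}{U_{K(\n),S(\n)}}$. Second, by the antisymmetry of the wedge product, $\phi_{i,K(\n)}\circ\tilde\Phi_{K(\n)}=0$ for each $i\le r-1$: expanding $\tilde\Phi_{K(\n)}(u_1\wedge\cdots\wedge u_r)$ as an alternating sum of $(r-1)\times (r-1)$ minors, applying $\phi_i$ produces an $r\times r$ determinant with two equal columns.

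For (i), I would combine the distribution relation of Proposition \ref{6.2}, namely $\bN^r_{K(\n)/K(\m)}\xi_\n = \prod_{\q\mid(\n/\m)}(1-\Frob_\q^{-1})\xi_\m$, with the norm-compatibility above, to deduce
$$\bN_{K(\n)/K(\m)}\,\tilde\Phi_{K(\n)}(\xi_\n^\chi) = \prod_{\q\mid(\n/\m)}(1-\Frob_\q^{-1})\,\tilde\Phi_{K(\m)}(\xi_\m^\chi).$$
Passing to Kummer classes via Proposition \ref{h1u}, exactly as in the proof of Theorem \ref{6.5}, yields the defining relation of an Euler system of rank one.

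For (iii), I would apply the $\D$-linear map $\tilde\Phi_K$ term-by-term to $\boldsymbol{\beta}^\St\in\KS_r(\Mchi)$. The Kolyvagin system axioms at primes $\q\mid\n$ are $\D$-linear identities relating finite and singular parts in exterior powers, and are preserved under any $\D$-linear transformation. The \emph{new} local condition at primes above $p$ is then automatic: by property (b), $\tilde\Phi_K(\beta_\n^\St)$ has image in $\cap_i\ker(\phi_{i,K})^\chi = \cL_\Phi^\chi$, so it lies in $\cL_\Phi^\chi\otimes\BB_\n$, which is the local condition defining $\cF_\Phi$.

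For (ii), the construction of \cite[Theorem 3.2.4]{kolysys} produces $\stek_{\Phi,\n}\in H^1(k,\Mchi)$ by applying Kolyvagin derivative operators at primes $\q\mid\n$ to the Euler system class at level $K(\n)$ and then descending to $k$. At a prime $\p\mid p$, which is coprime to every $\q\mid\n$, the local component of $\stek_{\Phi,\n}$ is obtained by corestricting from $K(\n)$ to $K$ the local image of $\tilde\Phi_{K(\n)}(\xi_\n^\chi)$ in $V_{K(\n)}$. By property (b) this image already lies in $\cap_i\ker(\phi_{i,K(\n)})$, and by property (a) the corestriction lands in $\cap_i\ker(\phi_{i,K}) = \cL_\Phi$. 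Taking the $\chi$-component places $\stek_{\Phi,\n}$ inside $\KS_1(\Mchi,\cF_\Phi)$. The main obstacle is precisely here: making rigorous the claim that the Kolyvagin derivative construction at the auxiliary primes $\q\mid\n$ commutes with localization at $p$ and with the norm maps of $V_\infty^*$, so that the $\cF_\Phi$ local condition is genuinely preserved after the descent from level $K(\n)$ down to $K$. All other parts reduce to bookkeeping once properties (a) and (b) are in hand.
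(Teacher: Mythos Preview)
Your proposal is correct and aligns with the paper's approach; the paper's proof simply cites \cite[Proposition 6.6]{stark} for (i), \cite[Proposition 2.2 and Theorem 2.19]{kazim} for (i) and (ii), and says (iii) follows from Proposition \ref{9.3} by direct calculation. Your properties (a) and (b) and the arguments built on them are precisely what those references carry out, so you have reconstructed the substance behind the citations; the one part you flag as an obstacle---compatibility of the Kolyvagin derivative with localization at $p$ in (ii)---is exactly what is handled in \cite[Theorem 2.19]{kazim}.
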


\begin{proof}
The first assertion is proved in \cite[Proposition 6.6]{stark}.
Both (i) and (ii) are proved in \cite[Proposition 2.2 and Theorem 2.19]{kazim}.
Assertion (iii) follows from Proposition \ref{9.3} by direct calculation.
\end{proof}

\begin{prop}
\label{11.3}
Suppose $\Phi := \phi_1\wedge\ldots\wedge\phi_{r-1} \in \wedge^{r-1}V_\infty^*$ and $\n\in\Np$.   
Under the restriction map $K^\times \to K(\n)^\times$ and the inclusion 
$\BB_\n \subset \A_{H(\n)}^{\nu(\n)}/\A_{H(\n)}^{\nu(\n)+1}$, 
we have
$$
\stek_{\Phi,\n} \mapsto \sum_{\d\mid\n} \Tw_{K(\d)/K}(\tilde\Phi_{K(\d)}(\xi_\d^\chi))\cdot\DD_{\n,\n/\d}
   \in U_{K(\n),S(\n)}^\chi \otimes \A_{H(\n)}^{\nu(\n)}/\A_{H(\n)}^{\nu(\n)+1}.
$$
\end{prop}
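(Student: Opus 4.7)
The plan is to reduce to Proposition \ref{9.2}, which is the special case $r=1$, $\Phi = 1 \in \wedge^{0}V_\infty^*$ of the assertion, by replacing the Stark-unit Euler system $\{\xi_\n^\chi\}$ with its $\Phi$-projection $\{\tilde\Phi_{K(\n)}(\xi_\n^\chi)\}$.

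By Proposition \ref{11.2}(i) the collection $\{\tilde\Phi_{K(\n)}(\xi_\n^\chi) : \n\in\Np\}$ is an Euler system of rank one for $\Mchi$, and by Proposition \ref{11.2}(ii) the Kolyvagin system $\stesys_\Phi$ is the one produced from it by the derivative construction of \cite[Theorem 3.2.4]{kolysys}. The first step would be to extract from the proof of Proposition \ref{9.2} (and from the calculations of Sano in \cite[\S3]{sano2}) the abstract statement that, for \emph{any} rank-one Euler system $\{c_\n : \n\in\Np\}$ for $\Mchi$ whose twisted traces satisfy $\Tw_{K(\d)/K}(c_\d) \in U_{K(\d),S(\d)}^\chi \otimes \A_{H(\d)}^{\nu(\d)}$, the associated rank-one Kolyvagin system $\kappa_\n$ satisfies
$$
\kappa_\n \;=\; \sum_{\d\mid\n}\Tw_{K(\d)/K}(c_\d)\cdot\DD_{\n,\n/\d}
$$
inside $U_{K(\n),S(\n)}^\chi \otimes \A_{H(\n)}^{\nu(\n)}/\A_{H(\n)}^{\nu(\n)+1}$ under the restriction $K \hookto K(\n)$. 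The arguments of \cite[Theorem 7.2 and Proposition 6.5]{darmon.conj} use only the Euler system norm relations at the primes dividing $\n$ together with the augmentation-ideal bound, so they go through verbatim in this generality.

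The second step would be to apply the extracted abstract statement to $c_\n := \tilde\Phi_{K(\n)}(\xi_\n^\chi)$. The norm relations are exactly Proposition \ref{11.2}(i). For the filtration bound one starts from Theorem \ref{ordvan}, which places $\Tw_{K(\d)/K}(\xi_\d^\chi)$ in $\wedge^{r}U_{K(\d),S(\d)}^\chi \otimes \A_{H(\d)}^{\nu(\d)}$ (using Lemma \ref{7.2} after taking $\chi$-parts to drop the $\lat{r}{}{-}$ notation), and then observes that $\tilde\Phi_{K(\d)}$ is $\Gal(K(\d)/k)$-equivariant and therefore commutes with the $H(\d)$-action appearing in $\Tw_{K(\d)/K}$ and with multiplication by the augmentation ideal; so $\Tw_{K(\d)/K}(\tilde\Phi_{K(\d)}(\xi_\d^\chi))$ sits in $U_{K(\d),S(\d)}^\chi \otimes \A_{H(\d)}^{\nu(\d)}$ as required.

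The step I expect to require the most care is verifying that pulling $\tilde\Phi$ through the Kolyvagin derivative construction is legitimate, that is, that the derivative of the $\Phi$-projected Euler system agrees level by level with the $\Phi$-projection of what the derivative produces on $\{\xi_\n^\chi\}$. This reduces to the compatibility of the $\phi_{i,K(\m)}$ with the $\phi_{i,K(\n)}$ under the trace maps $V_{K(\n)} \to V_{K(\m)}$ for $\m \mid \n$, which is built into the inverse-limit definition $V_\infty^* := \varprojlim V_{K(\n)}^*$ and is precisely the compatibility used to establish Proposition \ref{11.2}(i). Once this bookkeeping is recorded, substituting $c_\n = \tilde\Phi_{K(\n)}(\xi_\n^\chi)$ into the abstract formula yields the stated identity for $\stek_{\Phi,\n}$.
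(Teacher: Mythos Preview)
Your approach is correct and matches the paper's, which simply says the proof is similar to Proposition \ref{9.2} (or refers to \cite[\S3]{sano2}). Your third step is unnecessary: since $\stesys_\Phi$ is \emph{by definition} the Kolyvagin system attached to the rank-one Euler system $\{\tilde\Phi_{K(\n)}(\xi_\n^\chi)\}$ (Proposition \ref{11.2}(ii)), your first two steps already yield the formula directly, with no need to compare against a $\Phi$-projection of any rank-$r$ derivative construction.
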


\begin{proof}
The proof is similar to Proposition \ref{9.2}, or see \cite[\S3]{sano2}.
\end{proof}

\begin{thm}
\label{11.4}
Suppose $\Phi := \phi_1\wedge\ldots\wedge\phi_{r-1} \in \wedge^{r-1}V_\infty^*$.
If $\chi \ne \one$ then for every $\n\in\N$ we have 
$\stek_{\Phi,\n} = \tilde\Phi_K(\beta^\St_\n)$.
\end{thm}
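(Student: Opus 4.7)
The plan is to mirror the proof of Theorem \ref{9.5}, but with the modified Selmer structure $\cF_\Phi$ in place of the natural one. By Proposition \ref{11.2}(ii) and (iii), both $\stesys_\Phi$ and $\boldsymbol{\beta}^\St_\Phi$ lie in $\KS_1(\Mchi,\cF_\Phi)$. So it suffices to (a) show this module is free of rank one over $\D$, (b) check that the two systems agree at $\n=1$, and (c) deal with the cases where the core rank drops and where $\n\in\N-\Np$.

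First I would verify that the Selmer structure $\cF_\Phi$ has core rank one. The natural structure $\cF_{\unr}$ has core rank $r$, and $\cF_\Phi$ is obtained from $\cF_{\unr}$ by cutting down by the $r-1$ linear local conditions $\phi_{1,K},\ldots,\phi_{r-1,K}$ at primes above $p$; this is essentially \cite[Proposition 2.2]{kazim}. The hypothesis $\chi\ne\one$ combined with Lemma \ref{nope} (which ensures $K$ has no nontrivial $p$-th roots of unity on the $\chi$-part) puts us in the setting of \cite[\S3.5]{kolysys}, so \cite[Theorem 5.2.10]{kolysys} applies and gives that $\KS_1(\Mchi,\cF_\Phi)$ is a free $\D$-module of rank one.

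Next I would match at $\n=1$. By Definition \ref{def9.3} and Definition \ref{sd} we have $\beta^\St_1=\delta_1^\chi=\xi_1^\chi$, so $\tilde\Phi_K(\beta^\St_1)=\tilde\Phi_K(\xi_1^\chi)$. On the other hand, the Euler-to-Kolyvagin map of \cite[Theorem 3.2.4]{kolysys} is the identity at $\n=1$, so $\stek_{\Phi,1}=\tilde\Phi_K(\xi_1^\chi)$ as well. Assuming $r(\chi,S)=r$, the element $\tilde\Phi_K(\xi_1^\chi)$ is nonzero in the free rank-one target (by Lemma \ref{8.7}(ii), once one knows $\tilde\Phi_K$ is nonzero on the $\chi$-part of $\lat{r}{}{U_{K,S}}$, a property of $\Phi$ inherited from the argument in \cite[\S6]{stark}), and the rank-one freeness then forces $\stesys_\Phi=\boldsymbol{\beta}^\St_\Phi$, so $\stek_{\Phi,\n}=\tilde\Phi_K(\beta^\St_\n)$ for every $\n\in\Np$. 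If instead $r(\chi,S)>r$, then Lemma \ref{8.7}(i) yields $\beta^\St_\n=0$ for all $\n$, and since $\stek_{\Phi,1}=0$ the end-of-proof argument from Theorem \ref{9.5} (invoking the finiteness of the relevant Selmer group together with \cite[Theorem \korankk(iv) and Proposition \thirteenone]{gen.kolysys}) gives $\stesys_\Phi=0$. Finally, for $\n\in\N-\Np$ the group $\BB_\n$ has order prime to $p$, so $\BB_\n\otimes\D=0$ and both sides vanish identically.

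The main obstacle is step (a): verifying that imposing the local conditions defined by $\cL_\Phi$ at primes above $p$ yields a Selmer structure of core rank exactly one and still satisfies the technical hypotheses (H.0)--(H.6) of \cite{kolysys} required for the rigidity theorem. This rests on the Leopoldt and totally real hypotheses imposed at the start of the section, and a careful local analysis at primes above $p$ following \cite{kazim}; once that is in place, the Euler-system/Kolyvagin-system dictionary developed in \S\ref{r=1} and Proposition \ref{11.3} does the rest.
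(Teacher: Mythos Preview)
Your overall strategy matches the paper's, but there is a genuine case split you are missing: you must distinguish whether the projections $\phi_{1,K},\ldots,\phi_{r-1,K}\in V_K^*$ are $\Zp$-linearly independent or not. Your step (a) asserts that $\cF_\Phi$ always has core rank one, but this is only true when those $r-1$ local functionals are independent; if they are dependent then $\cL_\Phi=\cap_i\ker(\phi_{i,K})$ has $\D$-corank strictly larger than one, the core rank of $\cF_\Phi$ exceeds one, and the rank-one rigidity theorem you invoke does not apply. The paper (following \cite[Proposition 1.8]{kazim}, not Proposition 2.2) gets core rank one only under the independence assumption, and treats the dependent case separately: there $\tilde\Phi_K$ is identically zero, so $\tilde\Phi_K(\beta^\St_\n)=0$ for all $\n$, and one then shows $\stesys_\Phi=0$ via the same ``$\kappa_1=0$ forces the system to vanish'' argument you already use for $r(\chi,S)>r$, with Leopoldt's conjecture entering through \cite[Remark 1.7]{kazim}.

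A second, related point: even in the independent case, your justification that $\tilde\Phi_K(\xi_1^\chi)\ne 0$ is too vague. Lemma \ref{8.7}(ii) gives $\xi_1^\chi\ne 0$ in $\wedge^r U_{K,S}^\chi$, but to conclude that $\tilde\Phi_K$ does not kill it you need the map $U_{K,S}^\chi\to V_K^\chi$ to be injective modulo torsion---this is precisely where Leopoldt's conjecture for $K$ is used, together with the independence of the $\phi_{i,K}$. The reference to \cite[\S6]{stark} does not supply this; you should invoke Leopoldt explicitly at this step rather than only in your closing remarks.
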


\begin{proof}
The proof is similar to that of Theorem \ref{9.5}.

Let $r(\chi,S)$ be as in Definition \ref{4.5}, and suppose first that $r(\chi,S) = r$ 
and $\phi_{1,K}, \dots, \phi_{r-1,K}$ are $\Zp$-linearly independent.
We have $\stesys_\Phi, \boldsymbol{\beta}^\St_\Phi \in \KS_1(\Mchi,\cF_\Phi)$ 
by Proposition \ref{11.2}.  
The core rank of $(\Mchi,\cF_\Phi)$ is one by \cite[Proposition 1.8]{kazim}.
All the hypotheses of 
\cite[\S3.5]{kolysys} hold, so $\KS_1(\Mchi,\cF_\Phi)$ is a free $\D$-module 
of rank one by \cite[Theorem 5.2.10]{kolysys}.
We have 
$\tilde\Phi_K(\beta^\St_1) = \tilde\Phi_K(\delta_1^\chi) 
   = \tilde\Phi_K(\xi_1^\chi) = \stek_{\Phi,1}$ 
by definition, and it follows from 
Lemma \ref{8.7}(ii), our assumption on the independence of the $\phi_{i,K}$, 
and Leopoldt's conjecture that this has infinite order in $\cL_\Phi^\chi$.  
Hence $\boldsymbol{\beta}^\St_\Phi = \stesys_\Phi$, i.e., 
$\stek_{\Phi,\n} = \tilde\Phi_K(\beta^\St_\n)$ 
for every $\n\in\Np$.

Now suppose that either $r(\chi,S) > r$ or the $\phi_{i,K}$ are linearly 
dependent.  In the former case Lemma \ref{8.7}(i) shows that $\delta_\n^\chi = 0$ 
for every $\n$, and in the latter case $\tilde\Phi_K = 0$, so in either case 
$\tilde\Phi_K(\beta^\St_\n) = 0$ for every $\n$.
Since $\stek_{\Phi,1} = 0$, the finiteness of the ideal class group together with 
\cite[Theorem \korankk(iv) and Proposition \thirteenone]{gen.kolysys}
(see also \cite[Theorem 5.2.12]{kolysys}) and Leopoldt's conjecture 
(see \cite[Remark 1.7]{kazim}) shows that $\stesys_\Phi = 0$, 
i.e., $\stek_{\Phi,\n} = 0$ for every $\n\in\Np$.

It remains to show that $\stek_\n = \beta^\St_\n \in U_{K,S(\n)}^\chi \otimes \BB_\n$ 
when $\n\in\N-\Np$.  This follows exactly as in the proof of Theorem \ref{9.5}, 
since $\BB_\n$ has order prime to $p$ if $\n \in \N-\Np$.
This completes the proof.
\end{proof}

\begin{thm}
\label{11.5}
Suppose that Conjectures $\St(K/k)$ and $\St(K(\n)/k)$ hold for every $\n$, and 
either $\chi \ne \one$ or $|S-S'| \ge 2$.  Then for every 
$\Phi := \phi_1\wedge\ldots\wedge\phi_{r-1} \in \wedge^{r-1}V_\infty^*$ and every 
$\n\in\N$,
$$
\tilde\Phi_{K(\n)}(\Tw_{K(\n)/K}(\epsilon_{K(\n),S(\n),T,S'}^\chi)) = 
   \tilde\Phi_{K(\n)}(\cR_{K(\n)/K}^\Artin(\epsilon_{K,S(\n),T,S'(\n)}^\chi))
$$
in $U_{K(\n),S(\n)} \otimes_{\Gal(K(\n)/k)} W_{K(\n),S'}^* \otimes \A_{H(\n)}^{\nu(\n)}/\A_{H(\n)}^{\nu(\n)+1}$.
In other words, the $\tilde\Phi_{K(\n)}$-pro\-jec\-tion of the 
$\chi$ part of Conjecture \ref{conj}(ii) holds for $(K(\n)/K/k,S(\n),T,S')$.
\end{thm}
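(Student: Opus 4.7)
The plan is to imitate the proof of Theorem \ref{9.6}, using the projector $\tilde\Phi$ to convert the rank-$r$ claim into a rank-one identity to which Theorem \ref{11.4} applies. As in that earlier proof, split into cases according to whether $\chi$ is trivial or not.

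For $\chi \ne \one$, proceed by induction on $\nu(\n)$. The base case $\n = 1$ is Proposition \ref{5.15}: indeed $\xi_1 = \delta_1$ (both come from $\epsilon_{K,S,T,S'}$) and $H(1) = 1$, so the identity is immediate after applying $\tilde\Phi_K$. For the inductive step, Theorem \ref{11.4} gives $\stek_{\Phi,\n} = \tilde\Phi_K(\beta^\St_\n)$. Expanding the left-hand side via Proposition \ref{11.3} and the right-hand side via Definition \ref{def9.3} (distributing $\tilde\Phi_K$ over the sum) yields
$$
\sum_{\d\mid\n}\Tw_{K(\d)/K}\bigl(\tilde\Phi_{K(\d)}(\xi_\d^\chi)\bigr)\cdot\DD_{\n,\n/\d}
   \;=\; \sum_{\d\mid\n}\tilde\Phi_K\bigl(\cR_{K(\d)/K}^\Artin(\delta_\d^\chi)\bigr)\cdot\DD_{\n,\n/\d}
$$
in $U_{K(\n),S(\n)}^\chi \otimes \A_{H(\n)}^{\nu(\n)}/\A_{H(\n)}^{\nu(\n)+1}$. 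For each proper divisor $\d$ of $\n$, the inductive hypothesis at level $\d$, combined with the Galois-equivariance of $\tilde\Phi_{K(\d)}$ (which lets it commute with $\Tw_{K(\d)/K}$) and the identification $\tilde\Phi_{K(\d)}|_{U_K} = \tilde\Phi_K$ coming from the inverse-limit compatibility of $\Phi \in \wedge^{r-1}V_\infty^*$, forces the $\d$-summands on the two sides to agree after multiplication by $\DD_{\n,\n/\d}$. Cancelling these matching terms and using $\DD_{\n,1} = 1$, the $\d = \n$ summands must coincide, which, after recognizing $\epsilon_{K(\n)}^\chi = \xi_\n^\chi \otimes \bw_\n^*$ and $\epsilon_K^\chi = \delta_\n^\chi \otimes \bw_K^*$ (and tracking the $\bw^*$-factors via \eqref{sss'}), is exactly the desired identity at level $\n$.

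For the case $\chi = \one$ with $|S-S'| \ge 2$, I would use the isomorphism \eqref{six.three} $\Gal(K(\n)/k) \cong \Gamma \times H(\n)$ to observe that the $\one$-isotypic component of every relevant module is $\Gamma$-invariant, so that both sides of the identity for $(K(\n)/K/k,S(\n),T,S')$ descend to the corresponding objects for the sub-tower $(k(\n)/k/k,S(\n),T,S',S'(\n))$. Since $S'$ is the full set of archimedean places of $k$, the hypothesis $|S-S'|\ge 2$ is exactly what allows an application of Theorem \ref{K=kthm}, which gives the unprojected identity at the $k(\n)/k/k$ level. Applying $\tilde\Phi_{K(\n)}$ and tensoring with $\D$ then yields the claimed $(p,\one)$-projection.

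The main obstacle is the functoriality book-keeping for $\tilde\Phi$: verifying that the family $\{\tilde\Phi_{K(\d)}\}$ inherited from an inverse-limit element $\Phi\in\wedge^{r-1}V_\infty^*$ is norm-compatible across levels, that each $\tilde\Phi_{K(\d)}$ commutes with $\Tw_{K(\d)/K}$, and that it interacts functorially with the Artin regulator pairing of Definition \ref{5.3}. Although these compatibilities are direct consequences of the inverse-limit definition of $V_\infty^*$ and of the naturality of $\cR^\Artin$, the diagram-chasing needed to make the term-by-term matching rigorous, especially tracking the $\bw^*$-factors and the various powers of the augmentation ideal $\A_{H(\n)}$, is the technical heart of the proof. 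Once these are in hand the induction proceeds cleanly, just as in the proof of Theorem \ref{9.5}.
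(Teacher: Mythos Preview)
Your proposal is correct and follows essentially the same approach as the paper: for $\chi \ne \one$ the paper likewise proceeds by induction on $\n$ from Theorem \ref{11.4}, using Proposition \ref{11.3} and Definition \ref{def9.3} to expand the two sides, and for $\chi = \one$ it simply invokes Theorem \ref{K=kthm}. The functoriality book-keeping you flag as the ``main obstacle'' is exactly what the paper leaves implicit in its two-line proof.
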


\begin{proof}
If $\chi \ne \one$, then this follows directly from Theorem \ref{11.4} by induction on $\n$, 
using Proposition \ref{11.3} and Definition \ref{def9.3} for the induction.
If $\chi = \one$, then this is Theorem \ref{K=kthm}.
\end{proof}

Let $\Sigma = \Sigma(K/k,S,T)$ be the set of rational primes dividing
$$
[K:k]\prod_{\lambda\in S - S'}\bN\lambda\prod_{\lambda\in T_K}(\bN\lambda - 1).
$$

\begin{thm}
\label{11.6}
Suppose that $k$ is totally real, $S'$ is the set of all archimedean places of $k$, 
Conjectures $\St(K/k)$ and $\St(K(\n)/k)$ hold for every $\n$, Leopoldt's conjecture 
holds for $K$, and $|S-S'| \ge 2$.  
Then for every $p \notin \Sigma$, every $\Phi \in \wedge^{r-1}V_\infty^*$, and every 
$\n\in\N$, we have
$$
\tilde\Phi_{K(\n)}(\Tw_{K(\n)/K}(\epsilon_{K(\n),S(\n),T,S'})) = 
   \tilde\Phi_{K(\n)}(\cR_{K(\n)/K}^\Artin(\epsilon_{K,S(\n),T,S'(\n)})).
$$
In other words, for every $\Phi \in \wedge^{r-1}V_\infty^*$ the leading term formula of
Conjecture \ref{conj}(ii) holds for $(K(\n)/K/k,S(\n),T,S')$ after applying $\tilde\Phi_{K(\n)}$.
\end{thm}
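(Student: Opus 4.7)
The plan is to deduce Theorem \ref{11.6} from Theorem \ref{11.5} by aggregating over all characters of $\Gamma$, in direct analogy with how Theorem \ref{9.7} is obtained from Theorem \ref{9.6}.

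First I would fix $p \notin \Sigma$ and verify that the running hypotheses of \S\ref{kssect} and of this section are in force. The set $\Sigma$ has been engineered so that $p \nmid [K:k]$ (enabling the $\chi$-decomposition over $\Zp$), $p \nmid \bN\lambda$ for every $\lambda \in S - S'$ (which, together with the fact that $S'$ consists of archimedean places, forces $S$ to contain no primes above $p$, matching the standing assumption of this section), and $p \nmid (\bN\lambda - 1)$ for every $\lambda \in T_K$ (condition \eqref{phyp}). Combined with the hypotheses of Theorem \ref{11.6} itself, namely $k$ totally real of degree $r$, $S'$ all archimedean places, Leopoldt for $K$, validity of $\St(K/k)$ and $\St(K(\n)/k)$, and $|S-S'| \ge 2$, every hypothesis needed to invoke Theorem \ref{11.5} is in place.

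Next I would decompose. Since $p \nmid [K:k]$, after extending scalars to a sufficiently large unramified extension of $\Zp$ the group ring $\Zp[\Gamma]$ splits as a product of rings indexed by the characters $\chi : \Gamma \to \bar\Q_p^\times$, and any finitely generated $\Zp[\Gamma]$-module $M$ becomes $\bigoplus_\chi M^\chi$. The maps $\Tw_{K(\n)/K}$, $\cR_{K(\n)/K}^\Artin$, and $\tilde\Phi_{K(\n)}$ are all $\Gamma$-equivariant, so both sides of the target identity decompose componentwise. Applying Theorem \ref{11.5} to each $\chi$ then yields the equality on the $\chi$-component: its hypothesis ``$\chi \ne \one$ or $|S - S'| \ge 2$'' is automatic for every nontrivial $\chi$ and is supplied by our assumption $|S - S'| \ge 2$ when $\chi = \one$.

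Summing these $\chi$-component equalities assembles them into the desired identity after tensoring with $\Zp$. I expect no substantive obstacle at this stage, since the real work has already been carried out in Theorems \ref{11.4} and \ref{11.5}, and the definition of $\Sigma$ is calibrated precisely to absorb every divisibility needed to make the character decomposition exact and the hypotheses of Theorem \ref{11.5} automatic. The only item I would want to verify carefully is the bookkeeping: that the target module in Theorem \ref{11.6} is indeed the direct sum over $\chi$ of the target modules of the $\chi$-component statements, which follows from the isotypic decomposition of each of $U_{K(\n),S(\n)}$, $W_{K(\n),S'}^*$, and $\A_{H(\n)}^{\nu(\n)}/\A_{H(\n)}^{\nu(\n)+1}$ after tensoring with $\Zp$.
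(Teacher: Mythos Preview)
Your proposal is correct and follows essentially the same approach as the paper: apply Theorem \ref{11.5} for every character $\chi$ of $\Gamma$ (the hypothesis $|S-S'|\ge 2$ covers the case $\chi=\one$) and sum, using that $p\notin\Sigma$ guarantees the needed divisibility conditions. Your additional verification that $\Sigma$ is calibrated to ensure the standing hypotheses of the section (no primes above $p$ in $S$, condition \eqref{phyp}, $p\nmid[K:k]$) is a helpful elaboration of what the paper leaves implicit.
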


\begin{proof}
We can apply 
Theorem \ref{11.5} for every prime $p \notin \Sigma$, and every character $\chi$ of 
$\Gamma$.  Summing the conclusion of Theorem \ref{11.5} over all $\chi$ gives 
the $\tilde\Phi_{K(\n)}$-projection of Conjecture \ref{conj}(ii) tensored with $\D$.
\end{proof}

\appendix
\section{Exterior algebras and determinants}
\label{wedges}

Let 
$\D$ be an integral domain with field of fractions $F$, and let $R = \D[\Gamma]$
with a finite abelian group $\Gamma$.  

If $M$ is an $R$-module, we let $M^* := \Hom_R(M,R)$, and $\mt{M}$ 
will denote the image of $M$ in $M \otimes_\D F$.
If $\rho \in R$, then $M[\rho]$ denotes the kernel of multiplication by 
$\rho$ in $M$.

Fix for this appendix an $R$-module $M$ of finite type.

\begin{defn}
\label{wdg1}
If $r \ge 0$, then $\wedge^r M$ (or $\wedge_R^r M$, if we need to emphasize the ring $R$) 
will denote the $r$-th exterior power of $M$ in the category of $R$-modules,
with the convention that $\wedge^0M = R$.  
If $\psi \in M^*$ and $r \ge 1$, we view $\psi\in\Hom(\wedge^rM,\wedge^{r-1}M)$ by
$$
\psi(m_1 \wedge \cdots \wedge m_r) := \sum_{i=1}^r (-1)^{i+1}\psi(m_i)
   (m_1 \wedge \cdots \wedge m_{i-1} \wedge m_{i+1} \wedge \cdots \wedge m_r).
$$
If $\bpsi\in\wedge^sM^*$ with $s \le r$, we view $\psi\in\Hom(\wedge^rM,\wedge^{r-s}M)$ by
$$
(\psi_1 \wedge \cdots \wedge \psi_s)(\bm) :=  \psi_s \circ \psi_{s-1}\circ \cdots \circ \psi_1(\bm).
$$
\end{defn}

In particular 
\begin{equation}
\label{detform}
(\phi_1 \wedge \cdots \wedge \phi_r)  (m_1 \wedge \cdots \wedge m_r)
   = \det(\phi_i(m_j)).
\end{equation}

\begin{defn}
\label{mylat}
For every $r \ge 0$, define 
$$
\lat{r}{R}{M} := \{\bm \in \mt{\wedge^r M} : 
   \text{$\bpsi(\bm) \in R$ for every $\bpsi \in \wedge^{r}M^*$}\}.
$$
In other words, $\lat{r}{R}{M}$ is the dual lattice to 
$\mt{\wedge^{r} M^*}$ in $\wedge^{r} M \otimes F$.
\end{defn}

\begin{lem}
\label{2.5new}
We have
$
\mt{\wedge^r M} \subset \lat{r}{R}{M},
$
with equality if $|\Gamma| \in \D^\times$ or if $r=1$.
\end{lem}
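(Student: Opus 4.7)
The inclusion $\overline{\wedge^r M} \subset \lat{r}{}{M}$ is the easy direction. By $R$-bilinearity it suffices to check it on pure wedges: for $\mathbf{m} = m_1 \wedge \cdots \wedge m_r$ in $\wedge^r M$ and $\boldsymbol{\psi} = \psi_1 \wedge \cdots \wedge \psi_r$ in $\wedge^r M^*$, formula \eqref{detform} gives $\boldsymbol{\psi}(\mathbf{m}) = \det(\psi_i(m_j)) \in R$, so the image of $\mathbf{m}$ in $\wedge^r M \otimes F$ lies in the dual lattice $\lat{r}{}{M}$.

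For the case $r = 1$, the plan is to convert the $R$-linear dual-lattice condition into a $\D$-linear one and then invoke reflexivity of finitely generated torsion-free $\D$-modules. Since $R = \D[\Gamma]$ is $\D$-torsion-free, every element of $M^*$ kills the $\D$-torsion of $M$, and hence $M^* = \overline{M}^*$; this lets me replace $M$ by $\overline M$ and assume $M$ is itself $\D$-torsion-free. Writing $\epsilon : R \to \D$ for the coefficient-of-identity map, Frobenius reciprocity supplies a natural isomorphism $M^* \isom \Hom_\D(M,\D)$, $\psi \mapsto \epsilon \circ \psi$, with inverse $\phi \mapsto \bigl(m \mapsto \sum_{g\in\Gamma}\phi(g^{-1}m)\,g\bigr)$. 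Reading off the coefficients of $\psi(\mathbf{m}) = \sum_g \phi(g^{-1}\mathbf{m})\,g$, an element $\mathbf{m} \in M\otimes_\D F$ lies in $\lat{1}{}{M}$ precisely when $\phi(g^{-1}\mathbf{m}) \in \D$ for every $g \in \Gamma$ and every $\phi \in \Hom_\D(M,\D)$. Reflexivity of finitely generated torsion-free $\D$-modules (which may be checked after localizing $\D$ at each height-one prime) then forces $g^{-1}\mathbf{m} \in M$ for every $g$, and in particular $\mathbf{m} \in M = \overline M$.

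For the case $|\Gamma| \in \D^\times$ and general $r$, the trace idempotent $|\Gamma|^{-1}\sum_g g \in R$ exhibits $\D$ as an $R$-module direct summand of $R$ and renders $R_F = F[\Gamma]$ semisimple by Maschke. Consequently the canonical map $\wedge^r_R M^* \otimes_\D F \to \bigl(\wedge^r_{R_F}(M\otimes_\D F)\bigr)^*$ is an isomorphism of $R_F$-modules, and the defining condition of $\lat{r}{}{M}$ coincides with the $r = 1$ dual-lattice condition applied to the finitely generated $R$-module $N := \wedge^r M$ paired against $N^* = \wedge^r M^*$. Applying the $r = 1$ case just proved to $N$ then yields $\lat{r}{}{M} = \overline{\wedge^r M}$. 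The main obstacle is precisely this last comparison step: without $|\Gamma|$ invertible the map $\wedge^r M^* \to (\wedge^r M)^*$ can fail to be an isomorphism even after tensoring with $F$, and it is exactly this failure that the enlarged lattice $\lat{r}{}{M}$ is designed to absorb in the general Rubin--Stark setup.
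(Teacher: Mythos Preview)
Your argument for the inclusion is fine, and your $r=1$ argument via Frobenius reciprocity is the right idea when $\D$ is a PID such as $\Z$ or $\Z_p$ (the only cases the paper actually uses, and the setting of the cited \cite{stark}). Your parenthetical justification of reflexivity ``by localizing at height-one primes'' is not correct, however: finitely generated torsion-free modules over a general integral domain need not be reflexive (e.g.\ $M=(x,y)$ over $\D=k[x,y]$, where $M^{**}=\D$), and passing to height-one localizations does not repair this. Over a PID the point is simply that torsion-free finitely generated means free, and that is what makes the step go through.

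The substantive gap is in the case $|\Gamma|\in\D^\times$. You try to reduce to the $r=1$ case for $N:=\wedge^r M$, but the $r=1$ result identifies $\overline N$ with the dual lattice of $N^*=\Hom_R(N,R)$, whereas $\lat{r}{}{M}$ is by definition the dual lattice of the image of $\wedge^r M^*$. You only establish that the natural map $\wedge^r M^*\to N^*$ becomes an isomorphism after tensoring with $F$; over $R$ itself you have not shown it is surjective, and without that the dual lattice of the smaller set $\wedge^r M^*$ is a priori \emph{larger} than that of $N^*$. So your chain runs the wrong way: it only recovers $\overline N=\lat{1}{}{N}\subset\lat{r}{}{M}$, which is the easy inclusion again. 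The paper's route supplies exactly the missing ingredient: when $|\Gamma|\in\D^\times$ (and $\D$ is a PID), the torsion-free quotient $\overline M$ is a projective $R$-module; for projective $P$ one has $\wedge^r P^*\cong(\wedge^r P)^*$ on the nose and $\wedge^r P$ is itself projective, hence reflexive, which gives $\lat{r}{}{M}=\overline{\wedge^r M}$ immediately.
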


\begin{proof}
The inclusion follows directly from the definition, and for the rest 
see \cite[Proposition 1.2]{stark}.  (If $|\Gamma| \in \D^\times$ the equality
holds because $\mt{M}$ is a projective $R$-module.)
\end{proof}

\begin{lem}
\label{rs}
If $\bm \in \lat{r}{R}{M}$ and $\bpsi \in \wedge^s M^*$ with $s \le r$, then 
$\bpsi(\bm) \in \lat{r-s}{R}{M}$.
\end{lem}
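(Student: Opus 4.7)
The plan is to unwind definitions and reduce the statement to the defining property of $\lat{r}{R}{M}$. Fix $\bm \in \lat{r}{R}{M}$ and $\bpsi \in \wedge^s M^*$. To show $\bpsi(\bm) \in \lat{r-s}{R}{M}$, I need two things: first that $\bpsi(\bm)$ lies in the image $\overline{\wedge^{r-s} M}$ of $\wedge^{r-s} M$ inside $\wedge^{r-s} M \otimes F$, and second that $\bphi(\bpsi(\bm)) \in R$ for every $\bphi \in \wedge^{r-s} M^*$.

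For the first point, I would observe that $\bm \in \overline{\wedge^r M}$ by definition of $\lat{r}{R}{M}$, so $\bm$ is the image of some $\bm' \in \wedge^r M$. The map $\bpsi : \wedge^r M \to \wedge^{r-s} M$ from Definition \ref{wdg1} extends compatibly to $\wedge^r M \otimes F \to \wedge^{r-s} M \otimes F$, and therefore $\bpsi(\bm)$ is the image of $\bpsi(\bm') \in \wedge^{r-s} M$, hence lies in $\overline{\wedge^{r-s} M}$.

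For the second point, I would use linearity to reduce to the case where both $\bpsi$ and $\bphi$ are pure wedges, say $\bpsi = \psi_1 \wedge \cdots \wedge \psi_s$ and $\bphi = \phi_1 \wedge \cdots \wedge \phi_{r-s}$ with $\psi_i, \phi_j \in M^*$. By the convention in Definition \ref{wdg1},
$$\bphi(\bpsi(\bm)) = \phi_{r-s} \circ \cdots \circ \phi_1 \circ \psi_s \circ \cdots \circ \psi_1(\bm) = (\bpsi \wedge \bphi)(\bm),$$
where $\bpsi \wedge \bphi = \psi_1 \wedge \cdots \wedge \psi_s \wedge \phi_1 \wedge \cdots \wedge \phi_{r-s} \in \wedge^r M^*$. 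Since $\bm \in \lat{r}{R}{M}$, the right-hand side lies in $R$, as required.

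Honestly, the main obstacle here is nothing more than a bookkeeping issue: making sure the composition convention in Definition \ref{wdg1} is compatible with the associativity $(\bpsi \wedge \bphi)(\bm) = \bphi(\bpsi(\bm))$, and that linearity in the argument $\bphi$ lets us reduce to pure wedges. Both are routine from the universal property of the exterior algebra. Once those conventions are pinned down, the lemma is an immediate consequence of the definition of $\lat{r}{R}{M}$ as the dual lattice to $\overline{\wedge^r M^*}$.
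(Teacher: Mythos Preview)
Your proof is correct and follows essentially the same route as the paper: both reduce to the identity $\bphi(\bpsi(\bm)) = (\bpsi\wedge\bphi)(\bm)$ and then invoke the defining property of $\lat{r}{R}{M}$. You are in fact slightly more careful than the paper, which omits the verification that $\bpsi(\bm)$ lands in $\overline{\wedge^{r-s}M}$.
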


\begin{proof}
If $\bpsi' \in \wedge^{r-s} M^*$ then 
$\bpsi'(\bpsi(\bm)) = (\bpsi\wedge\bpsi')(\bm) \in R$ 
because $\bm\in\lat{r}{R}{M}$, 
so $\bpsi\wedge\bm \in \lat{r-s}{R}{M}$ by definition.
\end{proof}

\begin{prop}
\label{Rpair}
Suppose $M$ is an $R$-module that is projective as an $\D$-module, and 
$B$ is an $\D$-module. 
For every $s \le r$ and $\rho \in F[\Gamma]$, the construction of Definition \ref{wdg1} 
induces a canonical pairing
$$
(\lat{r}{{R}}{M})[\rho] \times \w{r-s}\Hom_{R}(M,{R} \otimes_\D B) 
   \too (\lat{s}{{R}}{M})[\rho] \otimes_\D B^{\otimes(r-s)}.
$$
In particular, when $s=0$ this pairing takes values in $R[\rho] \otimes_\Z B^{\otimes r}$.
\end{prop}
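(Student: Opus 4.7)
My plan is to define the pairing by extending the iterated-contraction construction of Definition \ref{wdg1} so that homomorphisms with values in $R \otimes_\D B$ are allowed, with the $B$-factors carried along as passengers and accumulating into $B^{\otimes(r-s)}$ after $r-s$ contractions. Concretely, for a single $\psi \in \Hom_R(M, R \otimes_\D B)$, I first define a contraction $\tilde\psi : \w{r}M \to \w{r-1}M \otimes_\D B$ by the familiar formula $\tilde\psi(\wk{r}{m}) := \sum_{i=1}^r (-1)^{i+1} \psi(m_i) \star (m_1 \wedge \cdots \wedge \hat m_i \wedge \cdots \wedge m_r)$, where the symbol $\star$ denotes the $R$-balanced map $(R \otimes_\D B) \times_R \w{r-1}M \to \w{r-1}M \otimes_\D B$ sending $(r' \otimes b, x)$ to $r'x \otimes b$. $R$-multilinearity and alternation in the $m_i$'s make $\tilde\psi$ well-defined.

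Next I iterate: for $\bpsi = \psi_1 \wedge \cdots \wedge \psi_{r-s} \in \w{r-s}\Hom_R(M, R \otimes_\D B)$, set $\bpsi(\bm) := \tilde\psi_{r-s}(\cdots\tilde\psi_1(\bm)\cdots)$, where each subsequent $\tilde\psi_j$ acts on the current $\wedge$-factor and appends its $B$-tensor on the right. The sign computations proving alternation in the $\psi_i$'s are formally identical to those implicit in Definition \ref{wdg1}, since the $B$-factors do not interfere, so the iterated contraction descends to an $R$-linear map $\w{r}M \to \w{s}M \otimes_\D B^{\otimes(r-s)}$. I extend $F$-linearly to $\w{r}M \otimes F$; $R$-linearity then sends $\rho$-torsion to $\rho$-torsion automatically, handling the $[\rho]$-condition on both sides.

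The main step, and the principal obstacle, is showing that $\bpsi(\bm) \in \lat{s}{R}{M} \otimes_\D B^{\otimes(r-s)}$ whenever $\bm \in \lat{r}{R}{M}$. By induction on $r-s$ I reduce to the case $r-s = 1$: given $\psi \in \Hom_R(M, R \otimes_\D B)$ and $\bm \in \lat{r}{R}{M}$, I pair $\tilde\psi(\bm)$ against an arbitrary $\bphi' \in \w{r-1}M^*$ (which suffices by the dual-lattice definition of $\lat{r-1}{R}{M}$). Unwinding the contraction formula identifies $(\bphi' \otimes 1)(\tilde\psi(\bm))$ with the evaluation of $\bm$ against the ``wedge'' $\bphi' \wedge \psi$, viewed as an $R$-linear map $\w{r}M \to R \otimes_\D B$ built via the determinantal formula \eqref{detform}. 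Separating the $R$- and $B$-parts of $\psi$ exhibits this composite as coming from $\w{r}M^* \otimes_\D B$, and then the defining property of $\lat{r}{R}{M}$ places its value on $\bm$ in $R \otimes_\D B$. The final clause ($s=0$) follows from $\lat{0}{R}{M} = R$. The delicate point throughout is reconciling the $B$-coefficient bookkeeping with the dual-lattice formalism; the projectivity of $M$ as a $\D$-module is what lets $\lat{s}{R}{M} \otimes_\D B^{\otimes(r-s)}$ sit as a submodule of $\w{s}M \otimes_\D F \otimes_\D B^{\otimes(r-s)}$, making the claimed containment well-posed.
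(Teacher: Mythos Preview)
Your construction has a genuine gap at the step ``I extend $F$-linearly to $\w{r}M \otimes F$.'' The contraction $\tilde\psi$ lands in $\w{r-1}M \otimes_\D B$, and in the intended application (Corollary \ref{Ipair}) $B = \oplus_i \A_H^i/\A_H^{i+1}$ is a torsion $\D$-module. Tensoring the target with $F$ therefore kills it, so there is no $F$-linear extension to restrict from. But $\lat{r}{R}{M}$ can be strictly larger than the image of $\w{r}M$, so without such an extension you have not defined $\bpsi(\bm)$ for general $\bm \in \lat{r}{R}{M}$ at all. The subsequent ``main step'' then tries to locate an element inside $\lat{s}{R}{M} \otimes_\D B^{\otimes(r-s)}$ viewed as a submodule of $\w{s}M \otimes_\D F \otimes_\D B^{\otimes(r-s)}$; but for torsion $B$ the latter module is zero, so this containment is vacuous and cannot characterise the image. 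Your stated role for projectivity (making that inclusion injective) is thus misplaced.

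The paper's argument avoids this by using the $\D$-projectivity of $M$ at the outset to identify $\Hom_R(M, R \otimes_\D B) \cong M^* \otimes_\D B$, so that $\w{r-s}\Hom_R(M, R \otimes_\D B)$ maps to $(\w{r-s}M^*) \otimes_\D B^{\otimes(r-s)}$. The $B$-factors are then inert passengers, and the actual contraction is by elements of $\w{r-s}M^*$; this \emph{does} extend to $\lat{r}{R}{M}$ (Lemma \ref{rs}) because both source and target sit inside $F$-vector spaces. You in fact invoke exactly this splitting (``separating the $R$- and $B$-parts of $\psi$'') later in your argument --- that is where projectivity is really used --- but it needs to come \emph{before} you attempt to evaluate on $\lat{r}{R}{M}$, not after. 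Reordering your proof to pull the $B$-separation to the front would repair it and make it coincide with the paper's.
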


\begin{proof}
There are natural isomorphisms 
$$
\Hom_{R}(M,{R}) \cong \Hom_\D(M,\D), \quad 
   \Hom_{R}(M,{R} \otimes_\D B) \cong \Hom_\D(M,B).
$$
Since $M$ is a projective $\D$-module, the natural map 
$$
M^* \otimes_\D B \too \Hom_{R}(M,{R} \otimes_\D B)
$$
is an isomorphism.
This isomorphism gives the first map of
\begin{align*}
(\lat{r}{{R}}{M})[\rho] \times \w{r-s}\Hom_{R}(M,&{R} \otimes_\D B) 
   \isom (\lat{r}{{R}}{M})[\rho] \times \w{r-s}(M^* \otimes_\D B_n) \\
   &\too (\lat{r}{{R}}{M})[\rho] \times (\w{r-s}M^*) \otimes_\D B^{\otimes (r-s)} \\
   &\too (\lat{s}{{R}}{M})[\rho] \otimes_\D B^{\otimes (r-s)}
\end{align*}
and the last map comes from Definition \ref{wdg1}, using Lemma \ref{rs}.

If $s=0$, then $\lat{0}{{R}}{M} = {R}$ by definition.
\end{proof}

\begin{rem}
If (for example) 
$m_1,\ldots,m_r \in M$, $\phi_i,\ldots,\phi_r \in \Hom_{R}(M,{R} \otimes_\D B)$, 
and $s = 0$, then the pairing of Proposition \ref{Rpair} is given by 
$$
(\wk{r}m, \wk{r}{\phi}) \mapsto \det(\phi_i(m_j)).
$$
The content of Proposition \ref{Rpair} is that this pairing is defined on all of
$\lat{r}{R}{M}$, not just on $\mt{\wedge^r M }$.
\end{rem}

\end{document}